\tikzstyle{map} = [->, font=\scriptsize]
\tikzstyle{linj} = [left hook->, font=\scriptsize]
\tikzstyle{rinj} = [right hook->, font=\scriptsize]
\tikzstyle{sur} = [->>, font=\scriptsize]
\tikzstyle{cell} = [double,double equal sign distance,-implies, shorten >= 3.5pt, shorten <= 3.5pt, font=\scriptsize]
\tikzstyle{cell125} = [cell, shorten >= 1pt, shorten <= 1pt]
\tikzstyle{eq} = [double,double equal sign distance]
\tikzstyle{iso} = [above, sloped, inner sep=1.5pt]
\tikzstyle{nat} = [above, sloped, inner sep=2pt]
\tikzstyle{desc} = [fill=white, inner sep=2pt]
\tikzstyle{small} = [font=\scriptsize]
\tikzstyle{textbaseline} = [baseline=-2.8pt]
\tikzstyle{barred} = [decoration={markings, mark=at position 0.5 with {\draw[-] (0,-1.5pt) -- (0,1.5pt);}}, postaction ={decorate}]
\tikzstyle{math} = [matrix of math nodes, row sep=1em, column sep=1em, text height=1.5ex, text depth=0.25ex, nodes in empty cells]
\tikzstyle{math125em} = [math, row sep=1.25em, column sep=1.25em]
\tikzstyle{math175em} = [math, row sep=1.75em, column sep=1.75em]
\tikzstyle{math2em} = [math, row sep=2em, column sep=2em]
\tikzstyle{tab} = [math, row sep=1.75em, column sep=1.25em]
\def\slashedarrowfill@#1#2#3#4#5{%
  $\m@th\thickmuskip0mu\medmuskip\thickmuskip\thinmuskip\thickmuskip
   \relax#5#1\mkern-7mu%
   \cleaders\hbox{$#5\mkern-2mu#2\mkern-2mu$}\hfill
   \mathclap{#3}\mathclap{#2}%
   \cleaders\hbox{$#5\mkern-2mu#2\mkern-2mu$}\hfill
   \mkern-7mu#4$%
}
\def\rightslashedarrowfill@{%
  \slashedarrowfill@\relbar\relbar\mapstochar\rightarrow}
\newcommand\xslashedrightarrow[2][]{%
  \ext@arrow 0055{\rightslashedarrowfill@}{#1}{#2}}
\def\slashedrightarrow{\xslashedrightarrow{}}
\providecommand{\oc}{\strut\textup{opcart}} 
\providecommand{\cc}{\strut\textup{cart}}
\providecommand{\tc}{\strut\textup{tab}}
\providecommand{\corref}[1]{Corollary~\ref{#1}}
\providecommand{\defref}[1]{Definition~\ref{#1}}
\providecommand{\exref}[1]{Example~\ref{#1}}
\providecommand{\lemref}[1]{Lemma~\ref{#1}}
\providecommand{\propref}[1]{Proposition~\ref{#1}}
\providecommand{\of}{\circ}
\providecommand{\iso}{\cong}
\providecommand{\brar}{\slashedrightarrow}
\providecommand{\xrar}[1]{\xrightarrow{#1}}
\providecommand{\xlar}[1]{\xleftarrow{#1}}
\providecommand{\xsrar}[1]{\xslashedrightarrow{#1}}
\providecommand{\Rar}{\Rightarrow}
\providecommand{\xRar}[1]{\xRightarrow{#1}}
\providecommand{\eps}{\varepsilon}
\DeclareMathOperator{\dash}{--}
\providecommand{\ndash}{\nobreakdash-}
\providecommand{\tens}{\otimes}
\providecommand{\brks}[1]{\lbrack #1 \rbrack}
\providecommand{\bigbrks}[1]{\bigl\lbrack #1 \bigr\rbrack}
\providecommand{\pars}[1]{\left(#1\right)}
\providecommand{\bigpars}[1]{\bigl(#1\bigr)}
\providecommand{\angles}[1]{\langle#1\rangle}
\providecommand{\gen}[1]{\angles{#1}}
\providecommand{\ls}[2]{{}_{#1} #2} 
\providecommand{\act}[2]{#1 \cdot #2}
\providecommand{\pb}[2]{\tensor[_{#1}]{\times}{_{#2}}}
\providecommand{\natarrow}{\Rightarrow}
\providecommand{\map}[3]{#1\colon#2\to#3}
\providecommand{\nat}[3]{#1\colon#2\natarrow#3}
\providecommand{\cell}[3]{#1\colon#2\Rightarrow#3}
\providecommand{\hmap}[3]{#1\colon#2\slashedrightarrow#3}
\providecommand{\inv}[1]{{#1}^{-1}}
\DeclareMathOperator{\id}{id}
\providecommand{\ladj}{\dashv}
\providecommand{\op}[1]{#1^\textup{op}}
\providecommand{\co}[1]{#1^\textup{co}}
\providecommand{\catvar}[1]{\mathcal{#1}}
\providecommand{\2}{\mathsf 2}
\providecommand{\C}{\catvar C}
\providecommand{\D}{\catvar D}
\providecommand{\E}{\catvar E}
\providecommand{\K}{\catvar K}
\providecommand{\V}{\catvar V}
\providecommand{\Set}{\mathsf{Set}}
\providecommand{\Cat}{\mathsf{Cat}}
\providecommand{\enCat}[1]{#1\text-\Cat}
\providecommand{\inhom}[1]{\brks{#1}}
\providecommand{\Span}[1]{\mathsf{Span}(#1)}
\providecommand{\Prof}{\mathsf{Prof}}
\providecommand{\enProf}[1]{#1\text-\Prof}
\providecommand{\inProf}[1]{\Prof(#1)}
\providecommand{\Alg}[2]{\mathsf{Alg}_\textup{#1}\pars{#2}}
\providecommand{\hc}{\odot}
\providecommand{\rhom}{\triangleright}
\providecommand{\tab}[1]{\gen{#1}}
\title{On pointwise Kan extensions in double categories}
\author{Seerp Roald Koudenburg}
\address{Cortenhoeve 14\\2411 JM Bodegraven\\The Netherlands}
\thanks{Many of the results in this paper first appeared as part of my PhD thesis ``Algebraic weighted colimits'' that was written under the guidance of Simon Willerton. I would like to thank Simon for his advice and encouragement. Also I thank the anonymous referee for helpful suggestions, and the University of Sheffield for its financial support of my PhD studies.}
\keywords{double category, equipment, pointwise Kan extension, exact cell, tabulation}
\begin{document}
\maketitle
\begin{abstract}
	In this paper we consider a notion of pointwise Kan extension in double categories that naturally generalises Dubuc's notion of pointwise Kan extension along enriched functors. We show that, when considered in equipments that admit opcartesian tabulations, it generalises Street's notion of pointwise Kan extension in $2$-categories.
\end{abstract}
  
	\section*{Introduction}
	A useful construction in classical category theory is that of right Kan extension along functors and, dually, that of left Kan extension along functors. Many important notions, including that of limit and right adjoint functor, can be regarded as right Kan extensions. On the other hand right Kan extensions can often be constructed out of limits; such Kan extensions are called pointwise. It is this notion of Kan extension that was extended to more general settings, firstly by Dubuc in \cite{Dubuc70}, to a notion of pointwise Kan extension along $\V$-functors, between categories enriched in some suitable category $\V$, and later by Street in \cite{Street74}, to a notion of pointwise Kan extension along morphisms in any $2$-category.
	
	It is unfortunate that Street's notion, when considered in the $2$-category $\enCat\V$ of $\V$\ndash enriched categories, does not agree with Dubuc's notion of pointwise Kan extension, but is stronger in general. In this paper we show that by moving from $2$-categories to double categories it is possible to unify Dubuc's and Street's notion of pointwise Kan extension.
	
	In \S1 we recall the notion of double category, which generalises that of $2$-category by considering, instead of a single type, two types of morphism. For example one can consider both ring homomorphisms and bimodules between rings. One type of morphism is drawn vertically and the other horizontally so that cells in a double category, which have both a horizontal and vertical morphism as source and as target, are shaped like squares. Every double category $\K$ contains a $2$-category $V(\K)$ consisting of the objects and vertical morphisms of $\K$, as well as cells whose horizontal source and target are identities.
	
	In \S2 we recall the notion of equipment. An equipment is a double category in which horizontal morphisms can both be restricted and extended along vertical morphisms. Cells defining extensions will be especially useful in regard to Kan extensions; such cells are called opcartesian.
	
	In \S3 several notions of Kan extension in a double category are recalled. Among those is that of pointwise Kan extension in `closed' equipments which, in the closed equipment of enriched $\V$-categories, generalises Dubuc's notion. This notion we extend to arbitrary double categories.
	
	In \S4 we consider cells in double categories under whose precomposition the class of cells defining pointwise Kan extensions is closed. Because such cells generalise the notion of an exact square of functors, we call them exact cells.
	
	In \S5 we recall Street's definition of pointwise Kan extension in a $2$-category, which uses comma objects to define such Kan extensions in terms of ordinary Kan extensions. The notion of comma object in $2$-categories generalises to that of tabulation in equipments, and our main result shows that in an equipment that has all opcartesian tabulations, pointwise Kan extensions can be defined in terms of ordinary Kan extensions, analogous to Street's definition. From this it follows that, for such an equipment $\K$, the pointwise Kan extensions in its vertical $2$-category $V(\K)$, in the sense of Street, can be regarded as pointwise Kan extensions in $\K$, in our sense. We close this paper by showing that the equipment of categories internal to some suitable category $\E$ has all opcartesian tabulations, strengthening a result of Betti given in \cite{Betti96}.
	
	In the forthcoming \cite{Koudenburg14b} we consider conditions ensuring that algebraic Kan extensions can be lifted along the forgetful double functors $\Alg wT \to \K$. Here $T$ denotes a double monad on a double category $\K$, and $\Alg wT$ is the double category of $T$-algebras, weak vertical $T$-morphisms and horizontal $T$-morphisms, where `weak' means either `lax', `colax' or `pseudo'.
	
	\section{Double categories}
	We start by recalling the notion of double category. References for double categories include \cite{Grandis-Pare99} and \cite{Shulman08}.
  
	For many mathematical objects there is not one but two natural notions of morphism: besides functions of sets $X \to Y$ one can also consider relations $R \subseteq X \times Y$ as morphisms $X \brar Y$, besides ring homomorphisms $A \to B$ one can also consider $(A, B)$-bimodules as morphisms $A \brar B$, and besides functors of categories $\C \to \D$ also `profunctors' $\op\C \times \D \to \Set$ can be considered as morphisms $\C \brar \D$. In each case the second type of morphism can be composed: relations are composed as usual, bimodules are composed using tensor products and profunctors by using `coends'. It is worth noticing that in the latter two cases a choice of tensor product or coend has to be made for each composite, so that composition of bimodules and that of profunctors are not strictly associative.
	
	In each of the previous examples the interaction between the two types of morphism can be described by $2$\ndash dimensional cells that are shaped like squares, as drawn below. For example if $f$ and $g$ are ring homomorphisms, $J$ is an $(A, B)$-bimodule and $K$ is a $(C, D)$-bimodule, then the square $\phi$ below depicts an $(f, g)$-bilinear map $J \to K$---that is a homomorphism $\map\phi JK$ of abelian groups such that $\act{fa}{\phi x} = \phi(\act ax)$ and $\act{\phi x}{gb} = \phi(\act xb)$.
  \begin{displaymath}
  	\begin{tikzpicture}
  		\matrix(m)[math175em]{A & B \\ C & D. \\};
  		\path[map]  (m-1-1) edge[barred] node[above] {$J$} (m-1-2)
													edge node[left] {$f$} (m-2-1)
									(m-1-2) edge node[right] {$g$} (m-2-2)
									(m-2-1) edge[barred] node[below] {$K$} (m-2-2);
			\path[transform canvas={shift={($(m-1-2)!(0,0)!(m-2-2)$)}}] (m-1-1) edge[cell] node[right] {$\phi$} (m-2-1);
		\end{tikzpicture}
	\end{displaymath}
  
	The notion of double category describes situations as above: generalising the notion of a $2$-category, its data contains not one but two types of morphism while its cells are square like. Formally a double category is defined as a weakly internal category in the $2$-category $\Cat$ of categories, functors and natural transformations, as follows; we shall give an elementary description afterwards.
	\begin{definition} \label{double category}
		A \emph{double category} $\K$ is given as follows.
		\begin{displaymath}
			\begin{tikzpicture}[baseline]
    		\matrix(m)[math, column sep=2em]{\K_1 \pb RL \K_1 & \K_1 & \K_0 \\};
    		\path[map]  (m-1-1) edge node[above] {$\hc$} (m-1-2)
        		        (m-1-2) edge[transform canvas={yshift=5.5pt}] node[above] {$L$} (m-1-3)
        		                edge[transform canvas={yshift=-5.5pt}] node[below] {$R$} (m-1-3)
        		        (m-1-3) edge[transform canvas={yshift=-2.5pt}] node[above=-3pt] {$1$} (m-1-2);
  		\end{tikzpicture} \qquad\qquad\qquad \begin{tikzpicture}[baseline]
    		\matrix(m)[math2em]
    			{ \K_1 \pb RL \K_1 & \K_1 \\
      		\K_1 & \K_0 \\ };
    			\path[map]  (m-1-1) edge node[above] {$\pi_1$} (m-1-2)
          			              edge node[left] {$\pi_2$} (m-2-1)
          			      (m-1-2) edge node[right] {$R$} (m-2-2)
          			      (m-2-1) edge node[below] {$L$} (m-2-2);
  		\end{tikzpicture}
		\end{displaymath}
		It consists of a diagram of functors as on the left above, where $\K_1 \pb RL \K_1$ is the pullback on the right, such that
		\begin{displaymath}
  		L \of \hc = L \of \pi_1, \qquad R \of \hc = R \of \pi_2, \qquad \text{and} \qquad L \of 1 = \id = R \of 1,
  	\end{displaymath}
		together with natural isomorphisms
		\begin{displaymath}
			\mathfrak a\colon (J \hc H) \hc K \iso J \hc (H \hc K), \quad \mathfrak l\colon 1_A \hc M \iso M \quad \text{and} \quad \mathfrak r\colon M \hc 1_B \iso M,
		\end{displaymath}
		where $(J, H, K) \in \K_1 \pb RL \K_1 \pb RL \K_1$ and $M \in \K_1$ with $LM = A$ and $RM = B$. The isomorphisms $\mathfrak a$, $\mathfrak l$ and $\mathfrak r$ are required to satisfy the usual coherence axioms for a monoidal category or bicategory (see e.g.\ Section VII.1 of \cite{MacLane98}), while their images under both $R$ and $L$ must be identities.
	\end{definition}
	
	The objects of $\K_0$ are called \emph{objects} of $\K$ while the morphisms $\map fAC$ of $\K_0$ are called \emph{vertical morphisms} of $\K$. An object $J$ of $\K_1$ such that $LJ = A$ and $RJ = B$ is denoted by a barred arrow
\begin{displaymath}
  \hmap JAB
\end{displaymath}
  and called a \emph{horizontal morphism}. A morphism $\map\phi JK$ in $\K_1$, with \mbox{$L \phi = \map fAC$} and $R \phi = \map gBD$, is depicted
\begin{displaymath}
  \begin{tikzpicture}
    \matrix(m)[math175em]{A & B \\ C & D \\};
    \path[map]  (m-1-1) edge[barred] node[above] {$J$} (m-1-2)
                        edge node[left] {$f$} (m-2-1)
                (m-1-2) edge node[right] {$g$} (m-2-2)
                (m-2-1) edge[barred] node[below] {$K$} (m-2-2);
    \path[transform canvas={shift={($(m-1-2)!(0,0)!(m-2-2)$)}}] (m-1-1) edge[cell] node[right] {$\phi$} (m-2-1);
  \end{tikzpicture}
\end{displaymath}
  and called a \emph{cell}. We will call $J$ and $K$ the \emph{horizontal source} and \emph{target} of $\phi$, while we call $f$ and $g$ its \emph{vertical source} and \emph{target}. A cell whose vertical source and target are identities is called \emph{horizontal}.
  
  The composition of $\K_1$ allows us to vertically compose two cells $\phi$ and $\psi$, that share a common horizontal edge as on the left below, to form a new cell $\psi \of \phi$ whose sources and targets can be read off from the drawing of $\phi$ on top of $\psi$, as shown.
  \begin{equation} \label{interchange law}
  	\begin{tikzpicture}[textbaseline]
			\matrix(m)[math175em]{A & B \\ C & D \\ E & F \\};
			\path[map]	(m-1-1) edge[barred] node[above] {$J$} (m-1-2)
													edge node[left] {$f$} (m-2-1)
									(m-1-2) edge node[right] {$g$} (m-2-2)
									(m-2-1) edge[barred] node[below] {$K$} (m-2-2)
													edge node[left] {$h$} (m-3-1)
									(m-2-2) edge node[right] {$k$} (m-3-2)
									(m-3-1) edge[barred] node[below] {$L$} (m-3-2);
			\path[transform canvas={shift=(m-2-1)}]
									(m-1-2) edge[cell] node[right] {$\phi$} (m-2-2)
									(m-2-2) edge[cell, transform canvas={yshift=-3pt}] node[right] {$\psi$} (m-3-2);
		\end{tikzpicture} \qquad\qquad \begin{tikzpicture}[textbaseline]
			\matrix(m)[math175em]{A & B & E \\ C & D & F \\};
			\path[map]	(m-1-1) edge[barred] node[above] {$J$} (m-1-2)
													edge node[left] {$f$} (m-2-1)
									(m-1-2) edge[barred] node[above] {$H$} (m-1-3)
													edge node[right] {$g$} (m-2-2)
									(m-1-3) edge node[right] {$h$} (m-2-3)
									(m-2-1) edge[barred] node[below] {$K$} (m-2-2)
									(m-2-2) edge[barred] node[below] {$L$} (m-2-3);
			\path[transform canvas={shift=($(m-1-1)!0.5!(m-2-2)$)}]
									(m-1-2) edge[cell] node[right] {$\phi$} (m-2-2)
									(m-1-3) edge[cell] node[right] {$\chi$} (m-2-3);
		\end{tikzpicture} \qquad\qquad \begin{tikzpicture}[textbaseline]
			\matrix(m)[math175em]{\phantom{\text x} & \phantom{\text x} & \phantom{\text x} \\ \phantom{\text x} & \phantom{\text x} & \phantom{\text x} \\ \phantom{\text x} & \phantom{\text x} & \phantom{\text x} \\};
			\path[map]	(m-1-1) edge (m-1-2)
													edge (m-2-1)
									(m-1-2) edge (m-1-3)
													edge (m-2-2)
									(m-1-3) edge (m-2-3)
									(m-2-1) edge (m-2-2)
													edge (m-3-1)
									(m-2-2) edge (m-2-3)
													edge (m-3-2)
									(m-2-3) edge (m-3-3)
									(m-3-1) edge (m-3-2)
									(m-3-2) edge (m-3-3);
			\path[transform canvas={shift={($(m-2-1)!0.5!(m-2-2)$)}}] (m-1-2) edge[cell] node[right] {$\phi$} (m-2-2)
									(m-1-3) edge[cell] node[right] {$\chi$} (m-2-3)
									(m-2-2) edge[cell] node[right] {$\psi$} (m-3-2)
									(m-2-3) edge[cell] node[right] {$\xi$} (m-3-3);
		\end{tikzpicture}
  \end{equation}
  Two composable horizontal morphisms $A \xsrar J B \xsrar H E$ can be composed using the functor $\hc$, giving the horizontal composite $\hmap{J \hc H}AE$. Likewise a pair of cells $\phi$ and $\chi$, sharing a common vertical edge as in the middle above, can be horizontally composed, resulting in a cell $\phi \hc \chi$ whose sources and targets can be read off from the drawing of $\phi$ and $\chi$ side-by-side as shown. The functoriality of $\hc$ implies that, for a square of composable cells as on the right above, we have $(\psi \of \phi) \hc (\xi \of \chi) = (\psi \hc \xi) \of (\phi \hc \chi)$; this identity is called the \emph{interchange law}.
  
  Finally $\K$ is equipped with a horizontal unit $\hmap{1_A}AA$ for each object $A$, as well as a horizontal unit cell $1_f$, as on the left below, for each vertical morphism $\map fAC$. Unlike vertical composition of vertical morphisms and cells, which is strictly associative and unital, horizontal composition of horizontal morphisms and cells is only associative up to the invertible horizontal cells $\mathfrak a$ below, where $A \xsrar J B \xsrar H C \xsrar K D$, and unital up to the invertible horizontal cells $\mathfrak l$ and $\mathfrak r$ below, where $\hmap MAB$. The cells $\mathfrak a$ are called \emph{associators}, while the cells $\mathfrak l$ and $\mathfrak r$ are called \emph{unitors}. A double category with identities as associators and unitors is called \emph{strict}.
  \begin{displaymath}
  	\begin{tikzpicture}[baseline]
			\matrix(m)[math175em]{A & A \\ C & C \\};
			\path[map]  (m-1-1) edge[barred] node[above] {$1_A$} (m-1-2)
													edge node[left] {$f$} (m-2-1)
									(m-1-2) edge node[right] {$f$} (m-2-2)
									(m-2-1) edge[barred] node[below] {$1_C$} (m-2-2);
			\path[transform canvas={shift={($(m-1-2)!(0,0)!(m-2-2)$)}}] (m-1-1) edge[cell] node[right] {$1_f$} (m-2-1);
		\end{tikzpicture} \qquad \begin{tikzpicture}[baseline]
			\matrix(m)[math175em, column sep=4em]{A & D \\ A & D \\};
			\path[map]  (m-1-1) edge[barred] node[above] {$(J \hc H) \hc K$} (m-1-2)
													edge node[left] {$\id_A$} (m-2-1)
									(m-1-2) edge node[right] {$\id_D$} (m-2-2)
									(m-2-1) edge[barred] node[below] {$J \hc (H \hc K)$} (m-2-2);
			\path[transform canvas={shift={($(m-1-2)!(0,0)!(m-2-2)$)}}] (m-1-1) edge[cell] node[right] {$\mathfrak a$} (m-2-1);
		\end{tikzpicture} \qquad \begin{tikzpicture}[baseline]
			\matrix(m)[math175em]{A & B \\ A & B \\};
			\path[map]  (m-1-1) edge[barred] node[above] {$1_A \hc M$} (m-1-2)
													edge node[left] {$\id_A$} (m-2-1)
									(m-1-2) edge node[right] {$\id_B$} (m-2-2)
									(m-2-1) edge[barred] node[below] {$M$} (m-2-2);
			\path[transform canvas={shift={($(m-1-2)!(0,0)!(m-2-2)$)}}] (m-1-1) edge[cell] node[right] {$\mathfrak l$} (m-2-1);
		\end{tikzpicture} \qquad \begin{tikzpicture}[baseline]
			\matrix(m)[math175em]{A & B \\ A & B \\};
			\path[map]  (m-1-1) edge[barred] node[above] {$M \hc 1_B$} (m-1-2)
													edge node[left] {$\id_A$} (m-2-1)
									(m-1-2) edge node[right] {$\id_B$} (m-2-2)
									(m-2-1) edge[barred] node[below] {$M$} (m-2-2);
			\path[transform canvas={shift={($(m-1-2)!(0,0)!(m-2-2)$)}}] (m-1-1) edge[cell] node[right] {$\mathfrak r$} (m-2-1);
		\end{tikzpicture}
  \end{displaymath}
  
  Cells whose horizontal source and target are units are called \emph{vertical}. To make our drawings of cells more readable we will depict both vertical and horizontal identities simply as $\begin{tikzpicture}[baseline=- 4pt] \path (0.2,0) edge[eq] (0.7,0); \node at (0,0) {$A$}; \node at (0.9,0) {$A$};\end{tikzpicture}$. For example vertical and horizontal cells will be drawn as
	\begin{displaymath}
		\begin{tikzpicture}[textbaseline]
  		\matrix(m)[math175em]{A & A \\ C & C \\};
  		\path[map]  (m-1-1) edge node[left] {$f$} (m-2-1)
									(m-1-2) edge node[right] {$g$} (m-2-2);
			\path				(m-1-1) edge[eq] (m-1-2)
									(m-2-1) edge[eq] (m-2-2);
			\path[transform canvas={shift={($(m-1-2)!(0,0)!(m-2-2)$)}}] (m-1-1) edge[cell] node[right] {$\phi$} (m-2-1);
		\end{tikzpicture}
		\qquad \text{and} \qquad
		\begin{tikzpicture}[textbaseline]
  		\matrix(m)[math175em]{A & B \\ A & B \\};
  		\path[map]  (m-1-1) edge[barred] node[above] {$J$} (m-1-2)
									(m-2-1) edge[barred] node[below] {$K$} (m-2-2);
			\path				(m-1-1) edge[eq] (m-2-1)
									(m-1-2) edge[eq] (m-2-2);
			\path[transform canvas={shift={($(m-1-2)!(0,0)!(m-2-2)$)}}] (m-1-1) edge[cell] node[right] {$\phi$} (m-2-1);
		\end{tikzpicture}
	\end{displaymath}
	respectively; also vertical cells will be often denoted as $\cell\phi fg$ instead of the less informative $\cell\phi{1_A}{1_C}$.
	
	We will often describe compositions of cells by drawing `grids' like that on the right of \eqref{interchange law} above, in varying degrees of detail. In particular we shall often not denote identity cells in such grids but simply leave them empty. In horizontal compositions of more than two horizontal morphisms or cells we will leave out bracketings and assume a right to left bracketing, for example we will simply write $J_1 \hc \dotsb \hc J_n$ for $J_1 \hc \bigpars{\dotsb \hc (J_{n-1} \hc J_n) \dotsb }$. Moreover, when writing down compositions or drawing grids containing horizontal composites of more than two cells, we will leave out the (inverses of) associators and unitors of $\K$. This is possible because their coherence implies that any two ways of filling in the left out (inverses of) associators and unitors result in the same composite. In fact Grandis and Par\'e show in their Theorem 7.5 of \cite{Grandis-Pare99} that every double category is equivalent to a strict double category, whose associators and unitors are identity cells.
	
	\subsection{Examples.}
	In the remainder of this section we describe in detail the three double categories that we will use throughout: the archetypical double category $\Prof$ of categories, functors, profunctors and natural transformations, as well as two generalisations $\enProf \V$ and $\inProf\E$, the first obtained by enriching over a suitable symmetric monoidal category $\V$ and the second by internalising in a suitable category $\E$ with pullbacks. Other examples can be found in Section 3 of \cite{Grandis-Pare99} and Section 2 of \cite{Shulman08}.
	
	\begin{example} \label{example: unenriched profunctors}
		The double category $\Prof$ of profunctors is given as follows. Its objects and vertical morphisms, forming the category $\Prof_0$, are small categories and functors. Its horizontal morphisms $\hmap JAB$ are \emph{profunctors}, that is functors $\map J{\op A \times B}\Set$. We think of the elements of $J(a, b)$ as morphisms, and denote them $\map jab$. Likewise we think of the actions of $A$ and $B$ on $J$ as compositions; hence, for morphisms $\map u{a'}a$ and $\map vb{b'}$, we write $v \of j \of u = J(u, v)(j)$. A cell
		\begin{displaymath}
			\begin{tikzpicture}
				\matrix(m)[math175em]{A & B \\ C & D \\};
				\path[map]  (m-1-1) edge[barred] node[above] {$J$} (m-1-2)
														edge node[left] {$f$} (m-2-1)
										(m-1-2) edge node[right] {$g$} (m-2-2)
										(m-2-1) edge[barred] node[below] {$K$} (m-2-2);
				\path[transform canvas={shift={($(m-1-2)!(0,0)!(m-2-2)$)}}] (m-1-1) edge[cell] node[right] {$\phi$} (m-2-1);
			\end{tikzpicture}
		\end{displaymath}
		is a natural transformation $\nat\phi J{K(f, g)}$, where $\hmap{K(f,g) = K \of (\op f \times g)}AB$. Such transformations clearly vertically compose so that they form a category $\Prof_1$, with profunctors as objects.
	
		The horizontal composition $J \hc H$ of a pair of composable profunctors $\hmap JAB$ and \mbox{$\hmap HBE$} is defined on objects by the coends $(J \hc H)(a, e) = \int^B J(a, \dash) \times H(\dash, e)$. A coend is a type of colimit, see Section IX.6 of \cite{MacLane98}; in fact $(J \hc H)(a, e)$ coincides with the coequaliser of
		\begin{equation} \label{composition of unenriched profunctors}
  	  \coprod\limits_{b_1, b_2 \in B} J(a, b_1) \times B(b_1, b_2) \times H(b_2, e) \rightrightarrows \coprod\limits_{b \in B} J(a, b) \times H(b, e),
  	\end{equation}
  	where the maps let $B(b_1, b_2)$ act on $J(a, b_1)$ and $H(b_2, e)$ respectively. This can be thought of as a multi-object (and cartesian) variant of the tensor product of bimodules. In detail, $(J \hc H)(a, e)$ forms the set of equivalence classes of pairs $(\map jab, \map hbe)$, where $j \in J$ and $h \in H$, under the equivalence relation generated by $(a \xrar{j'} b_1 \xrar v b_2, b_2 \xrar{h'} e) \sim (a \xrar{j'} b_1, b_1 \xrar v b_2 \xrar{h'} e)$, for all $j' \in J$, $v \in B$ and $h' \in H$.
  	
  	The definition of $J \hc H$ on objects is extended to a functor $\op A \times E \to \Set$ by using the universal property of coends: the image of $\map{(u,w)}{(a,e)}{(a',e')}$ is taken to be the unique factorisation of
  	\begin{displaymath}
  		\coprod_{b \in B} J(a', b) \times H(b, e) \xrar{\coprod J(u, \id) \times H(\id, w)} \coprod_{b \in B} J(a, b) \times H(b, e') \to (J \hc H)(a, e')
  	\end{displaymath}
  	through the universal map defining $(J \hc H)(a', e)$.
	  \begin{displaymath}
	  	\begin{tikzpicture}
				\matrix(m)[math175em]{A & B & E \\ C & D & F \\};
				\path[map]	(m-1-1) edge[barred] node[above] {$J$} (m-1-2)
														edge node[left] {$f$} (m-2-1)
										(m-1-2) edge[barred] node[above] {$H$} (m-1-3)
														edge node[right] {$g$} (m-2-2)
										(m-1-3) edge node[right] {$h$} (m-2-3)
										(m-2-1) edge[barred] node[below] {$K$} (m-2-2)
										(m-2-2) edge[barred] node[below] {$L$} (m-2-3);
				\path[transform canvas={shift=($(m-1-1)!0.5!(m-2-2)$)}]
										(m-1-2) edge[cell] node[right] {$\phi$} (m-2-2)
										(m-1-3) edge[cell] node[right] {$\chi$} (m-2-3);
			\end{tikzpicture}
	  \end{displaymath}
	   Similarly the horizontal composite $\phi \hc \chi$ of the two cells above is given by the unique factorisation of the maps
	   \begin{multline*}
	   	\coprod_{b \in B} J(a, b) \times H(b,e) \xrar{\coprod \phi_{(a, b)} \times \chi_{(b,e)}} \coprod_{b \in B} K(fa, gb) \times L(gb, he) \\
	   		\to \coprod_{d \in D} K(fa, d) \times L(d, he) \to (K \hc L)(fa, he)
	   \end{multline*}
	   through the universal maps defining $(J \hc H)(a, e)$.
	   
	   The associators $\mathfrak a \colon (J \hc H) \hc K \iso J \hc (H \hc K)$ are obtained by using the fact that the cartesian product of $\Set$ preserves colimits in both variables, together with `Fubini's theorem for coends': for any functor $\map S{\op A \times A \times \op B \times B}\Set$ there exist canonical isomorphisms $\int^B \int^A S \iso \int^{A \times B} S \iso \int^A \int^B S$; see Section IX.8 of \cite{MacLane98} for the dual result for ends. Finally the unit profunctor $1_B$ on a category $B$ is given by its hom-objects $1_B(b_1, b_2) = B(b_1, b_2)$: one checks that the map $\coprod_b J(a, b) \times B(b, b') \to J(a, b')$ given by the action of $B$ on $J$ defines $J(a, b')$ as the coequaliser of \eqref{composition of unenriched profunctors}, in case $H = 1_B$, which induces the unitor $\mathfrak r\colon J \hc 1_B \iso J$; the other unitor $\mathfrak l\colon 1_A \hc J \iso J$ is obtained likewise. The horizontal unit $\cell{1_f}{1_A}{1_C}$ of a functor $\map fAC$ is simply given by the actions $A(a_1, a_2) \to C(fa_1, fa_2)$ of $f$ on the hom-sets.
	\end{example}
	
	\begin{example} \label{example: enriched profunctors}
		For any closed symmetric monoidal category $\V$ that is cocomplete there exists a $\V$-enriched variant of the double category $\Prof$, which is denoted $\enProf\V$. It consists of small $\V$-categories, $\V$-functors and \emph{$\V$-profunctors} $\hmap JAB$, that is $\V$-functors $\map J{\op A \tens B}\V$, while a cell
		\begin{displaymath}
			\begin{tikzpicture}
				\matrix(m)[math175em]{A & B \\ C & D \\};
				\path[map]  (m-1-1) edge[barred] node[above] {$J$} (m-1-2)
														edge node[left] {$f$} (m-2-1)
										(m-1-2) edge node[right] {$g$} (m-2-2)
										(m-2-1) edge[barred] node[below] {$K$} (m-2-2);
				\path[transform canvas={shift={($(m-1-2)!(0,0)!(m-2-2)$)}}] (m-1-1) edge[cell] node[right] {$\phi$} (m-2-1);
			\end{tikzpicture}
		\end{displaymath}
		is a $\V$-natural transformation $\nat\phi J{K(f,g)}$, where $K(f, g) = K \of (\op f \tens g)$. The structure of a double category on $\enProf\V$ is given completely analogously to that on $\Prof$, by replacing in the definition of $\Prof$ every instance of $\Set$ by $\V$, and every cartesian product of sets by a tensor product of $\V$-objects. In particular the horizontal composite $J \hc H$ is given by the coends $(J \hc H)(a, e) = \int^B J(a, \dash) \tens H(\dash, e)$, which form coequalisers of the $\V$-maps
		\begin{equation} \label{composition of enriched profunctors}
	    \coprod\limits_{b_1, b_2 \in B} J(a, b_1) \tens B(b_1, b_2) \tens H(b_2, e) \rightrightarrows \coprod\limits_{b \in B} J(a, b) \tens H(b, e),
	  \end{equation}
	  that are induced by letting $B(b_1, b_2)$ act on $J(a, b_1)$ and $H(b_2, e)$ respectively.
	\end{example}
	
	Next we describe the double category of spans in a category $\E$ with pullbacks, which will be used in describing the double category $\inProf\E$ of categories, functors and profunctors internal in $\E$.
	\begin{example} \label{example:spans}
		For a category $\E$ that has pullbacks, the double category $\Span\E$ of \emph{spans in $\E$} is defined as follows. The objects and vertical morphisms of $\Span\E$ are those of $\E$, while a horizontal morphism $\hmap JAB$ is a span $A \xlar{d_0} J \xrar{d_1} B$ in $\E$. A cell $\phi$ as on the left below is a map $\map\phi JK$ in $\E$ such that the diagram in the middle commutes.
		\begin{displaymath}
  		\begin{tikzpicture}[baseline]
  		  \matrix(m)[math175em]{A & B \\ C & D \\};
  		  \path[map]  (m-1-1) edge[barred] node[above] {$J$} (m-1-2)
  		                      edge node[left] {$f$} (m-2-1)
  		              (m-1-2) edge node[right] {$g$} (m-2-2)
  		              (m-2-1) edge[barred] node[below] {$K$} (m-2-2);
  		  \path[transform canvas={shift={($(m-1-2)!(0,0)!(m-2-2)$)}}] (m-1-1) edge[cell] node[right] {$\phi$} (m-2-1);
  		\end{tikzpicture}	\qquad\qquad \begin{tikzpicture}[baseline]
				\matrix(m)[math175em, column sep=1.25em, row sep=0.3em]{& J & \\ A & & B \\ & K & \\ C & & D \\};
				\path[map]	(m-1-2)	edge node[above left] {$d_0$} (m-2-1)
														edge node[right] {$\phi$} (m-3-2)
														edge node[above right] {$d_1$} (m-2-3)
										(m-2-1) edge node[left] {$f$} (m-4-1)
										(m-2-3) edge node[right] {$g$} (m-4-3)
										(m-3-2) edge node[below=4pt, right=-2pt] {$d_0$} (m-4-1)
														edge node[below=4pt, left=-5pt] {$d_1$} (m-4-3);
			\end{tikzpicture} \qquad\qquad \begin{tikzpicture}[baseline]
				\matrix(m)[math175em, row sep=1.2em, column sep=1.2em]
				{	& & \phantom B & & \\
					& J & & H & \\
					A & & B & & E \\ };
				\path[map]	(m-1-3) edge (m-2-2)
														edge (m-2-4)
										(m-2-2) edge node[above left] {$d_0$} (m-3-1)
														edge node[below left] {$d_1$} (m-3-3)
										(m-2-4) edge node[below right] {$d_0$} (m-3-3)
														edge node[above right] {$d_1$} (m-3-5);
				\draw (m-1-3) node {$J \times_B H$};
			\end{tikzpicture}
		\end{displaymath}
		Given spans $\hmap JAB$ and $\hmap HBE$, their composition $J \hc H$ is given by the usual composition of spans: after choosing a pullback $J \times_B H$ of $d_1$ and $d_0$ as on the right above, it is taken to consist of the two sides in this diagram. That this composition is associative and unital up to coherent isomorphisms, with spans of the form $A \xlar{\id} A \xrar{\id} A$ as units $1_A$, follows from the universality of pullbacks; horizontal composition of cells is also given using this universality.
	\end{example}
	
	Having recalled the notion of span in $\E$ we can now describe $\inProf\E$.
	\begin{example} \label{double category of internal profunctors}
		Let $\E$ be a category that has pullbacks and coequalisers, such that the coequalisers are preserved by pullback. The double category $\inProf\E$ of categories, functors and profunctors internal in $\E$ is given as follows. An \emph{internal category} $A$ in $\E$ consists of a triple $A = (A, m, e)$ where $A = \bigbrks{A_0 \xlar{d_0} A \xrar{d_1} A_0}$ is a span in $\E$, while $\cell m{A \hc A}A$ and $\cell e{1_{A_0}}A$ are horizontal cells in $\Span\E$, the \emph{multiplication} and \emph{unit} of $A$, which satisfy evident associativity and unit laws; see Section XII.1 of \cite{MacLane98}. For example, a category internal in the category $\Cat$ of categories and functors is a strict double category; compare \defref{double category}.
		
		An \emph{internal functor} $\map fAC$ in $\E$ consists of a cell $f$ in $\Span\E$ as on the left below, that is compatible with the multiplication and unit of $A$ and $B$. An \emph{internal profunctor} $\hmap JAB$ in $\E$ is a span $J = \bigbrks{A_0 \xlar{d_0} J \xrar{d_1} B_0}$ equipped with actions of $A$ and $B$, given by horizontal cells $\cell l{A \hc J}J$ and $\cell r{J \hc B}J$ in $\Span\E$, that are compatible with the multiplication and unit of $A$ and $B$, and satisfy a mixed associativity law; see Section~3 of \cite{Betti96}.
		\begin{displaymath}
			\begin{tikzpicture}[baseline]
  		  \matrix(m)[math175em]{A_0 & A_0 \\ C_0 & C_0 \\};
  		  \path[map]  (m-1-1) edge[barred] node[above] {$A$} (m-1-2)
  		                      edge node[left] {$f_0$} (m-2-1)
  		              (m-1-2) edge node[right] {$f_0$} (m-2-2)
  		              (m-2-1) edge[barred] node[below] {$C$} (m-2-2);
  		  \path[transform canvas={shift={($(m-1-2)!(0,0)!(m-2-2)$)}}] (m-1-1) edge[cell] node[right] {$f$} (m-2-1);
  		\end{tikzpicture} \qquad\qquad \begin{tikzpicture}[baseline]
  		  \matrix(m)[math175em]{A & B \\ C & D \\};
  		  \path[map]  (m-1-1) edge[barred] node[above] {$J$} (m-1-2)
  		                      edge node[left] {$f$} (m-2-1)
  		              (m-1-2) edge node[right] {$g$} (m-2-2)
  		              (m-2-1) edge[barred] node[below] {$K$} (m-2-2);
  		  \path[transform canvas={shift={($(m-1-2)!(0,0)!(m-2-2)$)}}] (m-1-1) edge[cell] node[right] {$\phi$} (m-2-1);
  		\end{tikzpicture} \qquad \qquad \begin{tikzpicture}[baseline]
  		  \matrix(m)[math175em]{A_0 & B_0 \\ C_0 & D_0 \\};
  		  \path[map]  (m-1-1) edge[barred] node[above] {$J$} (m-1-2)
  		                      edge node[left] {$f_0$} (m-2-1)
  		              (m-1-2) edge node[right] {$g_0$} (m-2-2)
  		              (m-2-1) edge[barred] node[below] {$K$} (m-2-2);
  		  \path[transform canvas={shift={($(m-1-2)!(0,0)!(m-2-2)$)}}] (m-1-1) edge[cell] node[right] {$\phi$} (m-2-1);
  		\end{tikzpicture}
		\end{displaymath}
		Finally an \emph{internal transformation} $\phi$ of internal profunctors, as in the middle above, is given by a cell $\phi$ in $\Span\E$ as on the right, that is compatible with the actions in the sense that the following diagrams commute in $\E$.
		\begin{equation} \label{naturality of internal transformation}
			\begin{tikzpicture}[textbaseline]
				\matrix(m)[math2em, column sep=2.5em]{A \times_{A_0} J & C \times_{C_0} K \\ J & K \\};
				\path[map]	(m-1-1) edge node[above] {$f \times_{f_0} \phi$} (m-1-2)
														edge node[left] {$l$} (m-2-1)
										(m-1-2) edge node[right] {$l$} (m-2-2)
										(m-2-1) edge node[below] {$\phi$} (m-2-2);
			\end{tikzpicture} \qquad\qquad\quad \begin{tikzpicture}[textbaseline]
				\matrix(m)[math2em, column sep=2.5em]{J \times_{B_0} B & K \times_{D_0} D \\ J & K \\};
				\path[map]	(m-1-1) edge node[above] {$\phi \times_{g_0} g$} (m-1-2)
														edge node[left] {$r$} (m-2-1)
										(m-1-2) edge node[right] {$r$} (m-2-2)
										(m-2-1) edge node[below] {$\phi$} (m-2-2);
			\end{tikzpicture}
		\end{equation}
		
		Given internal profunctors $\hmap JAB$ and $\hmap HBE$, their horizontal composite $J \hc H$ is defined to be the coequaliser
		\begin{displaymath}
		J \times_{B_0} B \times_{B_0} H \rightrightarrows J \times_{B_0} H \to J \hc H
		\end{displaymath}
		of the $\E$-maps given by the actions of $B$ on $J$ and $H$. The internal profunctor structures on $J$ and $H$ make $J \hc H$ into an internal profunctor $A \brar E$, by using the universal property of coequalisers and the fact that the coequalisers are preserved by pullback. The same universal property allows us to define horizontal composites of internal transformations, and it is not hard to prove that the horizontal composition of internal profunctors above is associative up to invertible associators. Moreover, notice that any internal category $A$ can be regarded as an internal profunctor $\hmap{1_A}AA$, with both actions given by the multiplication of $A$; these form the units for horizontal composition, up to invertible unitors.
	\end{example}
	
	Having described several examples, we now consider some simple constructions on double categories. The first constructions are duals: like $2$-categories, double categories have both a vertical and horizontal dual as follows.
	\begin{definition} \label{definition: duals of double categories}
		Given a double category $\K$ we denote by $\op \K$ the double category that has the same objects and horizontal morphisms of $\K$, while it has a vertical morphism $\map{\op f}CA$ for each vertical morphism $\map fAC$ in $\K$, and a cell $\cell{\op\phi}KJ$, as on the left below, for each cell $\phi$ in $\K$ as in the middle. The structure making $\op \K$ into a double category is induced by that of $\K$.
		\begin{displaymath}
			\begin{tikzpicture}[baseline]
  		  \matrix(m)[math175em]{C & D \\ A & B \\};
  		  \path[map]  (m-1-1) edge[barred] node[above] {$K$} (m-1-2)
  		                      edge node[left] {$\op f$} (m-2-1)
  		              (m-1-2) edge node[right] {$\op g$} (m-2-2)
  		              (m-2-1) edge[barred] node[below] {$J$} (m-2-2);
  		  \path[transform canvas={shift={($(m-1-2)!(0,0)!(m-2-2)$)}, xshift=-4pt}] (m-1-1) edge[cell] node[right] {$\op\phi$} (m-2-1);
  		\end{tikzpicture} \qquad\qquad \begin{tikzpicture}[baseline]
  		  \matrix(m)[math175em]{A & B \\ C & D \\};
  		  \path[map]  (m-1-1) edge[barred] node[above] {$J$} (m-1-2)
  		                      edge node[left] {$f$} (m-2-1)
  		              (m-1-2) edge node[right] {$g$} (m-2-2)
  		              (m-2-1) edge[barred] node[below] {$K$} (m-2-2);
  		  \path[transform canvas={shift={($(m-1-2)!(0,0)!(m-2-2)$)}}] (m-1-1) edge[cell] node[right] {$\phi$} (m-2-1);
  		\end{tikzpicture} \qquad\qquad \begin{tikzpicture}[baseline]
  		  \matrix(m)[math175em]{B & A \\ D & C \\};
  		  \path[map]  (m-1-1) edge[barred] node[above] {$\co J$} (m-1-2)
  		                      edge node[left] {$g$} (m-2-1)
  		              (m-1-2) edge node[right] {$f$} (m-2-2)
  		              (m-2-1) edge[barred] node[below] {$\co K$} (m-2-2);
  		  \path[transform canvas={shift={($(m-1-2)!(0,0)!(m-2-2)$)}, xshift=-4pt}] (m-1-1) edge[cell] node[right] {$\co\phi$} (m-2-1);
  		\end{tikzpicture}
		\end{displaymath}
		
		Likewise we denote by $\co\K$ the double category whose objects and vertical morphisms are those of $\K$, that has a horizontal morphism $\hmap{\co J}BA$ for each $\hmap JAB$ in $\K$ and a cell $\cell{\co\phi}{\co J}{\co K}$, as on the right above, for each cell $\phi$ in $\K$ as in the middle. The structure on $\co \K$ is induced by that of $\K$. We call $\op\K$ the \emph{vertical dual} of $\K$, and $\co \K$ the \emph{horizontal dual}.
	\end{definition}
	
	Secondly each double category contains a `vertical $2$-category' and a `horizontal bicategory' as follows.
	\begin{definition} \label{2-category and bicategory in a double category}
		Let $\K$ be a double category. We denote by $V(\K)$ the $2$-category that consists of the objects, vertical morphisms and vertical cells of $\K$. The vertical composite $\psi\phi$ of $\cell\phi fg$ and $\cell\psi gh$ in $V(\K)$ is given by the horizontal composite $\phi \hc \psi$\footnote{As usual we suppress the unitors here. If $f$, $g$ and $h$ are vertical morphisms $A \to C$ then $\phi \hc \psi$ is short for the composite $1_A \xRar{\inv{\mathfrak l}} 1_A \hc 1_A \xRar{\phi \hc \psi} 1_C \hc 1_C \xRar{\mathfrak l} 1_C$.} in $\K$, while the horizontal composite $\chi \of \phi$ in $V(\K)$, of composable cells $\cell\phi fg$ and $\cell\chi kl$, is given by the vertical composite $\chi \of \phi$ in $\K$.
		
		Likewise the objects, horizontal morphisms and horizontal cells of $\K$ combine into a bicategory $H(\K)$ whose compositions, units and coherence cells are those of $\K$.
	\end{definition}
	In the case of $V(\K)$ we have to check that its vertical composition is strictly associative: this follows from the coherence axioms for $\K$.
	\begin{example}
		It is easy to construct an isomorphism $V(\Prof) \iso \Cat$. Indeed, given functors $f$ and $\map gAC$, a vertical cell $\cell\phi fg$ in $\Prof$, that is given by natural maps $\map{\phi_{(a_1, a_2)}}{A(a_1, a_2)}{C(fa_1, ga_2)}$ where $a_1, a_2 \in A$, corresponds under this isomorphism to the natural transformation $f \Rar g$ in $\Cat$ that has components $\map{\phi_{(a, a)}(\id_a)}{fa}{ga}$. In the same way $V(\enProf\V) \iso \enCat\V$, the $2$-category of small $\V$-categories, $\V$\ndash functors and $\V$\ndash natural transformations. The bicategory $H(\enProf \V)$ is that of small $\V$-categories, $\V$\ndash profunctors and $\V$-natural transformations.
	\end{example}
	\begin{example}
		Let $\E$ be a category with pullbacks. Given internal functors $f$ and \mbox{$\map gAC$}, an \emph{internal transformation} \mbox{$\nat\phi fg$} is given by a cell $\phi$ in $\Span\E$, as on the left below, that makes the diagram of $\E$-maps on the right commute; see Section 1 of \cite{Street74}.
		\begin{displaymath}
			\begin{tikzpicture}[baseline]
  		  \matrix(m)[math175em]{A_0 & A_0 \\ C_0 & C_0 \\};
  		  \path[map]  (m-1-1) edge node[left] {$f_0$} (m-2-1)
  		              (m-1-2) edge node[right] {$g_0$} (m-2-2)
  		              (m-2-1) edge[barred] node[below] {$C$} (m-2-2);
  		  \path				(m-1-1) edge[eq] (m-1-2);
  		  \path[transform canvas={shift={($(m-1-2)!(0,0)!(m-2-2)$)}}] (m-1-1) edge[cell] node[right] {$\phi$} (m-2-1);
  		\end{tikzpicture} \qquad\qquad\qquad \begin{tikzpicture}[baseline]
  			\matrix(m)[math2em]{A & C \times_{C_0} C \\ C \times_{C_0} C & C \\};
  			\path[map]	(m-1-1) edge node[above] {$(f, \phi \of d_1)$} (m-1-2)
  													edge node[left] {$(\phi \of d_0, g)$} (m-2-1)
  									(m-1-2) edge node[right] {$m_C$} (m-2-2)
  									(m-2-1) edge node[below] {$m_C$} (m-2-2);
  		\end{tikzpicture}
		\end{displaymath}
		Categories, functors and transformations internal in $\E$ form a $2$-category $\Cat(\E)$. If $\E$ has coequalisers preserved by pullback, so that the double category $\inProf\E$ of profunctors internal in $\E$ exists, then it is not hard to show that $V(\inProf\E) \iso \Cat(\E)$ by using the following lemma.
	\end{example}
	
	\begin{lemma} \label{vertical internal transformations}
		Let $\map fAC$ and $\map gAD$ be internal functors in a category $\E$ with pullbacks, and let $\hmap KCD$ be an internal profunctor.
		\begin{displaymath}
			\begin{tikzpicture}[baseline]
  		  \matrix(m)[math175em]{A & A \\ C & D \\};
  		  \path[map]  (m-1-1) edge node[left] {$f$} (m-2-1)
  		              (m-1-2) edge node[right] {$g$} (m-2-2)
  		              (m-2-1) edge[barred] node[below] {$K$} (m-2-2);
  		  \path				(m-1-1) edge[eq] (m-1-2);
  		  \path[transform canvas={shift={($(m-1-2)!(0,0)!(m-2-2)$)}}] (m-1-1) edge[cell] node[right] {$\phi$} (m-2-1);
  		\end{tikzpicture} \qquad\qquad \begin{tikzpicture}[baseline]
  		  \matrix(m)[math175em]{A_0 & A_0 \\ C_0 & D_0 \\};
  		  \path[map]  (m-1-1) edge node[left] {$f_0$} (m-2-1)
  		              (m-1-2) edge node[right] {$g_0$} (m-2-2)
  		              (m-2-1) edge[barred] node[below] {$K$} (m-2-2);
  		  \path				(m-1-1) edge[eq] (m-1-2);
  		  \path[transform canvas={shift={($(m-1-2)!(0,0)!(m-2-2)$)}}] (m-1-1) edge[cell] node[right] {$\phi_0$} (m-2-1);
  		\end{tikzpicture} \qquad\quad \begin{tikzpicture}[baseline]
  			\matrix(m)[math2em]{A & C \times_{C_0} K \\ K \times_{D_0} D & K \\};
  			\path[map]	(m-1-1) edge node[above] {$(f, \phi_0 \of d_1)$} (m-1-2)
  													edge node[left] {$(\phi_0 \of d_0, g)$} (m-2-1)
  									(m-1-2) edge node[right] {$l$} (m-2-2)
  									(m-2-1) edge node[below] {$r$} (m-2-2);
  		\end{tikzpicture}
		\end{displaymath}
		Transformations of internal profunctors $\phi$, of the form as on the left above and natural in the sense of \eqref{naturality of internal transformation}, correspond bijectively to cells $\phi_0$ of spans in $\E$, as in the middle, that make the diagram on the right commute. This correspondence is given by the assignment $\phi \mapsto \brks{A_0 \xrar{e_A} A \xrar \phi K}$.
	\end{lemma}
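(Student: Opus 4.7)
The plan is to exhibit the claimed bijection by giving an explicit inverse to $\phi \mapsto \phi_0 = \phi \of e_A$, then verify both round trips are identities. First I would unpack what a vertical cell $\phi$ of the stated form is: since the horizontal unit $1_A$ is the internal profunctor with underlying span $A$ and both actions given by the multiplication of $A$, such a $\phi$ is an $\E$\ndash map $\phi\colon A \to K$ satisfying $d_0 \of \phi = f_0 \of d_0$, $d_1 \of \phi = g_0 \of d_1$, and the two naturality squares in \eqref{naturality of internal transformation}, with the actions on $1_A$ being $m_A$ on both sides.

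For the forward direction, set $\phi_0 = \phi \of e_A$. The identities $d_0 \of e_A = \id_{A_0} = d_1 \of e_A$ combined with the compatibilities of $\phi$ with the span legs give $d_0 \of \phi_0 = f_0$ and $d_1 \of \phi_0 = g_0$, so $\phi_0$ is a cell of spans. To obtain the square in the statement, I would precompose the left and right naturality squares of $\phi$ with suitable unit maps $A \to A \times_{A_0} A$: using that the multiplication $m_A$ becomes the identity when composed with a unit on either side, the square on the left of \eqref{naturality of internal transformation} yields $\phi = l \of (f, \phi_0 \of d_1)$ and the one on the right yields $\phi = r \of (\phi_0 \of d_0, g)$; these together imply that the required diagram commutes.

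For the inverse direction, given $\phi_0$, the commuting square lets me define $\phi\colon A \to K$ as the common value $l \of (f, \phi_0 \of d_1) = r \of (\phi_0 \of d_0, g)$. That $\phi$ is compatible with the span legs follows from the action axioms of $K$ (namely $d_0 \of l = d_0 \of \pi_C$ and $d_1 \of r = d_1 \of \pi_D$) and from the span compatibilities of $\phi_0$, $f$, $g$. The two naturality squares for $\phi$ will be the main computational step: I would prove each using (i) the associativity of $A$, (ii) the associativity of the corresponding action of $C$ or $D$ on $K$, (iii) the fact that $f$ (respectively $g$) is an internal functor, and (iv) the mixed associativity law $l \of (C \times_{C_0} r) = r \of (l \times_{D_0} D)$ of the two $K$\ndash actions. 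The defining formula for $\phi$ in terms of the opposite action is what allows the left and right naturality squares to be verified symmetrically.

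Finally the two round trips are straightforward unit computations. For $\phi \mapsto \phi_0 \mapsto \phi'$ the identity $\phi' = \phi$ is exactly the equality $\phi = l \of (f, \phi_0 \of d_1)$ already obtained above from the left action naturality of $\phi$. For $\phi_0 \mapsto \phi \mapsto \phi_0'$ I would compute $\phi \of e_A = l \of (f \of e_A,\, \phi_0 \of d_1 \of e_A) = l \of (e_C \of f_0,\, \phi_0) = \phi_0$, using that $f$ preserves units, $d_1 \of e_A = \id$, and the left unit law of $K$. The main obstacle is the verification of the two naturality squares for the constructed $\phi$; everything else is a diagram chase with unit and span axioms.
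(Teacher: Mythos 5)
Your proposal is correct and follows essentially the same route as the paper: the inverse assignment $\phi_0 \mapsto l \of (f, \phi_0 \of d_1)$ (the diagonal of the commuting square), naturality verified via functoriality of $f$ and $g$ together with associativity of the actions, and the round trips reduced to the unit laws. The only cosmetic difference is that you list the mixed associativity law as an ingredient, which is not needed once each naturality square is checked against the matching expression for $\phi$, as you yourself note.
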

	\begin{proof}
		First we show that for an internal transformation $\phi$ as on the left above, given by an $\E$-map $\map\phi AK$, the composite $\phi_0 = \phi \of e_A$ makes the naturality diagram on the right commute. To see this we compute its top leg:
		\begin{align} \label{naturality of phi_0}
			\bigbrks{A \xrar{(f, \phi_0 \of d_1)} C \times_{C_0} K \xrar l K} &= \bigbrks{A \xrar{(\id, e_A \of d_1)} A \times_{A_0} A \xrar{f \times_{f_0} \phi} C \times_{C_0} K \xrar l K} \notag\\
			&= \bigbrks{A \xrar{(\id, e_A \of d_1)} A \times_{A_0} A \xrar{m_A} A \xrar\phi K} = \phi,
		\end{align}
		where the second identity is the naturality of $\phi$, see \eqref{naturality of internal transformation}, while the third is the unit axiom for $A$. A similar argument shows that the bottom leg also equals $\phi$ so that commutativity follows.
		
		Secondly, for the reverse assignment we take $\phi_0 \mapsto l \of (f, \phi_0 \of d_1)$, which is the diagonal $A \to K$ of the commuting square above. To see that this forms a transformation of profunctors we have to show that the naturality diagrams \eqref{naturality of internal transformation} commute. That the first commutes is shown by
		\begin{align*}
			\bigbrks{A \times_{A_0} A \xrar{m_A} &A \xrar{(f, \phi_0 \of d_1)} C \times_{C_0} K \xrar l K} \\
			= \bigbrks{&A \times_{A_0} A \xrar{(f \of m_A, \phi_0 \of d_1)} C \times_{C_0} K \xrar l K} \\
			= \bigbrks{&A \times_{A_0} A \xrar{f \times_{f_0} (f, \phi_0 \of d_1)} C \times_{C_0} C \times_{C_0} K \xrar{m_C \times \id} C \times_{C_0} K \xrar l K} \\
			= \bigbrks{&A \times_{A_0} A \xrar{f \times_{f_0} l \of (f, \phi_0 \of d_1)} C \times_{C_0} K \xrar l K},
		\end{align*}
		where the second identity follows from the functoriality of $f$, and the third from the associativity of $l$. The second naturality diagram of \eqref{naturality of internal transformation} commutes likewise. That the assignments thus given are mutually inverse follows easily from \eqref{naturality of phi_0} above, and from the unit laws for $f$ and $\map l{C \times_{C_0} K}K$.
	\end{proof}
	
	\section{Equipments}
	Two important ways in which horizontal morphisms can be related to vertical morphisms are as `companions' and `conjoints'. Informally we think of the companion of $\map fAC$ as being a horizontal morphism $A \brar C$ that is `isomorphic' to $f$, while its conjoint is a horizontal morphism $C \brar A$ that is `adjoint' to $f$.
	
	\begin{definition} \label{definition: companions and conjoints}
		Let $\map fAC$ be a vertical morphism in a double category $\K$. A \emph{companion} of $f$ is a horizontal morphism $\hmap{f_*}AC$ equipped with cells $\ls f \eps$ and $\ls f \eta$ as on the left below, such that $\ls f \eps \of \ls f \eta = 1_f$ and $\ls f \eta \hc \ls f \eps = \id_{f_*}$\footnote{Again the unitors $\mathfrak l\colon 1_A \hc f_* \iso f_*$ and $\mathfrak r\colon f_* \hc 1_C \iso f_*$ are suppressed.}.
		\begin{displaymath}
			\begin{tikzpicture}[baseline]
   			\matrix(m)[math175em]{A & C \\ C & C \\};
    		\path[map]  (m-1-1) edge[barred] node[above] {$f_*$} (m-1-2)
        		                edge node[left] {$f$} (m-2-1);
    		\path       (m-1-2) edge[eq] (m-2-2)
        		        (m-2-1) edge[eq] (m-2-2);
    		\path[transform canvas={shift={($(m-1-1)!0.5!(m-2-1)$)}}] (m-1-2) edge[cell] node[right] {$\ls f \eps$} (m-2-2);
  		\end{tikzpicture} \qquad\qquad \begin{tikzpicture}[baseline]
    		\matrix(m)[math175em]{A & A \\ A & C \\};
    		\path       (m-1-1) edge[eq] (m-1-2)
        		                edge[eq] (m-2-1);
    		\path[map]  (m-1-2) edge node[right] {$f$} (m-2-2)
        		        (m-2-1) edge[barred] node[below] {$f_*$} (m-2-2);
    		\path[transform canvas={shift={($(m-1-1)!0.5!(m-2-1)$)}}] (m-1-2) edge[cell] node[right] {$\ls f \eta$} (m-2-2);
  		\end{tikzpicture} \qquad\qquad \begin{tikzpicture}[baseline]
    		\matrix(m)[math175em]{C & A \\ C & C \\};
    		\path[map]  (m-1-1) edge[barred] node[above] {$f^*$} (m-1-2)
										(m-1-2) edge node[right] {$f$} (m-2-2);
    		\path       (m-1-1) edge[eq] (m-2-1)
    		            (m-2-1) edge[eq] (m-2-2);
    		\path[transform canvas={shift={($(m-1-1)!0.5!(m-2-1)$)}}] (m-1-2) edge[cell] node[right] {$\eps_f$} (m-2-2);
  		\end{tikzpicture} \qquad\qquad \begin{tikzpicture}[baseline]
    		\matrix(m)[math175em]{A & A \\ C & A \\};
    		\path       (m-1-1) edge[eq] (m-1-2)
        		        (m-1-2) edge[eq] (m-2-2);
    		\path[map]  (m-1-1) edge node[left] {$f$} (m-2-1)
        		        (m-2-1) edge[barred] node[below] {$f^*$} (m-2-2);
    		\path[transform canvas={shift={($(m-1-1)!0.5!(m-2-1)$)}}] (m-1-2) edge[cell] node[right] {$\eta_f$} (m-2-2);
  		\end{tikzpicture}
		\end{displaymath}
		Dually a \emph{conjoint} of $f$ is a horizontal morphism $\hmap{f^*}CA$ equipped with cells $\eps_f$ and $\eta_f$ as on the right above, such that $\eps_f \of \eta_f = 1_f$ and $\eps_f \hc \eta_f = \id_{f^*}$.
	\end{definition}
	We call the cells $\ls f\eps$ and $\ls f\eta$ above \emph{companion cells}, and the identities that they satisfy \emph{companion identities}; likewise the cells $\eps_f$ and $\eta_f$ are called \emph{conjoint cells}, and their identities \emph{conjoint identities}. Notice that the notions of companion and conjoint are swapped when moving from $\K$ to $\co\K$.
	\begin{example} \label{example: enriched companions}
		In the double category $\enProf\V$ of $\V$-profunctors the companion of a $\V$\ndash functor $\map fAC$ can be given by $f_* = 1_C(f, \id)$, that is $f_*(a, c) = C(fa, c)$. Taking $\ls f\eps = \id_{1_C(f, \id)}$ and $\cell{\ls f\eta = 1_f}{1_A}{1_C(f,f)}$, given by the actions $A(a_1, a_2) \to C(fa_1, fa_2)$ of $f$ on hom-objects, we clearly have $\ls f\eps \of \ls f\eta = 1_f$. On the other hand $\ls f\eta \hc \ls f\eps$ is the factorisation of
		\begin{displaymath}
			\coprod_{a' \in A} A(a, a') \tens C(fa', c) \xrar{\coprod f \tens \id} \coprod_{a' \in A} C(fa, fa') \tens C(fa', c) \xrar{\of} C(fa, c),
		\end{displaymath}
		through the map $\coprod_{a'} A(a, a') \tens C(fa', c) \to C(fa, c)$ inducing the unitor \mbox{$\mathfrak l\colon 1_A \hc f_* \iso f_*$}. Since this factorisation is obtained by precomposing the above with the map $C(fa, c) \to \coprod_{a'} A(a, a') \tens C(fa', c)$, that is induced by the identity $\map{\id_a}1{A(a,a)}$, we see that the second companion identity $\ls f\eta \hc \ls f\eps = \id_{f_*}$ holds as well.
		
		The conjoint $\hmap{f^*}CA$ of $f$ is defined dually. 
	\end{example}
	
	The notions of companion and conjoint are closely related to that of `cartesian cell' and `opcartesian cell' which we shall now recall, using largely the same notation as used in Section 4 of \cite{Shulman08}.
	\begin{definition} \label{definition: cartesian and opcartesian cells}
		A cell $\phi$ on the left below is called \emph{cartesian} if any cell $\psi$, as in the middle, factors uniquely through $\phi$ as shown. Dually $\phi$ is called \emph{opcartesian} if any cell $\chi$ as on the right factors uniquely through $\phi$ as shown.
		\begin{displaymath}
  		\begin{tikzpicture}[textbaseline]
    		\matrix(m)[math175em]{A & B \\ C & D \\};
    		\path[map]  (m-1-1) edge[barred] node[above] {$J$} (m-1-2)
                        		edge node[left] {$f$} (m-2-1)
		                (m-1-2) edge node[right] {$g$} (m-2-2)
    		            (m-2-1) edge[barred] node[below] {$K$} (m-2-2);
    		\path[transform canvas={shift={($(m-1-1)!0.5!(m-2-1)$)}}] (m-1-2) edge[cell] node[right] {$\phi$} (m-2-2);
  		\end{tikzpicture} \quad\quad \begin{tikzpicture}[textbaseline]
    		\matrix(m)[math175em]{X & Y \\ A & B \\ C & D \\};
    		\path[map]  (m-1-1) edge[barred] node[above] {$H$} (m-1-2)
        		                edge node[left] {$h$} (m-2-1)
        		        (m-1-2) edge node[right] {$k$} (m-2-2)
        		        (m-2-1) edge node[left] {$f$} (m-3-1)
        		        (m-2-2) edge node[right] {$g$} (m-3-2)
        		        (m-3-1) edge[barred] node[below] {$K$} (m-3-2);
    		\path[transform canvas={shift={($(m-2-1)!0.5!(m-1-1)$)}}] (m-2-2) edge[cell] node[right] {$\psi$} (m-3-2);
  		\end{tikzpicture} = \begin{tikzpicture}[textbaseline]
    		\matrix(m)[math175em]{X & Y \\ A & B \\ C & D \\};
    		\path[map]  (m-1-1) edge[barred] node[above] {$H$} (m-1-2)
        		                edge node[left] {$h$} (m-2-1)
            		    (m-1-2) edge node[right] {$k$} (m-2-2)
            		    (m-2-1) edge[barred] node[below] {$J$} (m-2-2)
            		            edge node[left] {$f$} (m-3-1)
            		    (m-2-2) edge node[right] {$g$} (m-3-2)
            		    (m-3-1) edge[barred] node[below] {$K$} (m-3-2);
    		\path[transform canvas={shift=(m-2-1))}]
        		        (m-1-2) edge[cell] node[right] {$\psi'$} (m-2-2)
        		        (m-2-2) edge[transform canvas={yshift=-3pt}, cell] node[right] {$\phi$} (m-3-2);
  		\end{tikzpicture} \quad\quad \begin{tikzpicture}[textbaseline]
    		\matrix(m)[math175em]{A & B \\ C & D \\ X & Y \\};
    		\path[map]  (m-1-1) edge[barred] node[above] {$J$} (m-1-2)
        		                edge node[left] {$f$} (m-2-1)
        		        (m-1-2) edge node[right] {$g$} (m-2-2)
        		        (m-2-1) edge node[left] {$h$} (m-3-1)
        		        (m-2-2) edge node[right] {$k$} (m-3-2)
        		        (m-3-1) edge[barred] node[below] {$L$} (m-3-2);
    		\path[transform canvas={shift={($(m-2-1)!0.5!(m-1-1)$)}}] (m-2-2) edge[cell] node[right] {$\chi$} (m-3-2);
  		\end{tikzpicture} = \begin{tikzpicture}[textbaseline]
    		\matrix(m)[math175em]{A & B \\ C & D \\ X & Y \\};
    		\path[map]  (m-1-1) edge[barred] node[above] {$J$} (m-1-2)
        		                edge node[left] {$f$} (m-2-1)
            		    (m-1-2) edge node[right] {$g$} (m-2-2)
            		    (m-2-1) edge[barred] node[below] {$K$} (m-2-2)
            		            edge node[left] {$h$} (m-3-1)
            		    (m-2-2) edge node[right] {$k$} (m-3-2)
            		    (m-3-1) edge[barred] node[below] {$L$} (m-3-2);
    		\path[transform canvas={shift=(m-2-1))}]
        		        (m-1-2) edge[cell] node[right] {$\phi$} (m-2-2)
        		        (m-2-2) edge[transform canvas={yshift=-3pt}, cell] node[right] {$\chi'$} (m-3-2);
  		\end{tikzpicture}
		\end{displaymath}
	\end{definition}
	
	If a cartesian cell $\phi$ exists then we call $J$ a \emph{restriction of $K$ along $f$ and $g$}, and write $K(f, g) = J$; if $K = 1_C$ then we write $C(f, g) = 1_C(f,g)$. By their universal property, any two cartesian cells defining the same restriction factor through each other as invertible horizontal cells. Moreover, since the vertical composite of two cartesian cells is again cartesian, and since vertical units $\id_J$ are cartesian, it follows that restrictions are pseudofunctorial, in the sense that $K(f, g)(h, k) \iso K(f \of h, g \of k)$ and $K(\id, \id) \iso K$. Dually, if an opcartesian cell $\phi$ exists then we call $K$ an \emph{extension of $J$ along $f$ and $g$}; like restrictions, extensions are unique up to isomorphism and pseudofunctorial. In fact the notions of restriction and extension are vertically dual, that is they reverse when moving from $\K$ to $\op\K$. We shall usually not name cartesian and opcartesian cells, but simply depict them as below.
	\begin{displaymath}
		\begin{tikzpicture}[baseline]
    		\matrix(m)[math175em]{A & B \\ C & D \\};
    		\path[map]  (m-1-1) edge[barred] node[above] {$J$} (m-1-2)
                        		edge node[left] {$f$} (m-2-1)
		                (m-1-2) edge node[right] {$g$} (m-2-2)
    		            (m-2-1) edge[barred] node[below] {$K$} (m-2-2);
    		\draw ($(m-1-1)!0.5!(m-2-2)$) node {\cc};
  		\end{tikzpicture} \qquad\qquad\qquad\qquad\qquad \begin{tikzpicture}[baseline]
    		\matrix(m)[math175em]{A & B \\ C & D \\};
    		\path[map]  (m-1-1) edge[barred] node[above] {$J$} (m-1-2)
                        		edge node[left] {$f$} (m-2-1)
		                (m-1-2) edge node[right] {$g$} (m-2-2)
    		            (m-2-1) edge[barred] node[below] {$K$} (m-2-2);
    		\draw ($(m-1-1)!0.5!(m-2-2)$) node {\oc};
  		\end{tikzpicture}
	\end{displaymath}
	
	\begin{example}
		For morphisms $\map fAC$, $\hmap KCD$ and $\map gBD$ in $\enProf\V$ we have, in \exref{example: enriched profunctors}, already used the notation $K(f, g)$ to denote the composite \mbox{$\hmap{K \of (\op f \tens g)}AB$}. It is readily seen that the cell $\cell\eps{K(f,g)}K$ given by the identity transformation on $K\of (\op f \tens g)$ is indeed cartesian, so that $K \of (\op f \tens g)$ is the restriction of $K$ along $f$ and $g$.
	\end{example}
	
	The following lemmas record some standard properties of cartesian and opcartesian cells. The proof of the first is easy and omitted.
	\begin{lemma}[Pasting lemma]
		In a double category consider the following vertical composite.
		\begin{displaymath}
			\begin{tikzpicture}
				\matrix(m)[math175em]{A & B \\ C & D \\ E & F \\};
				\path[map]	(m-1-1) edge[barred] node[above] {$J$} (m-1-2)
														edge node[left] {$f$} (m-2-1)
										(m-1-2) edge node[right] {$g$} (m-2-2)
										(m-2-1) edge[barred] node[below] {$K$} (m-2-2)
														edge node[left] {$h$} (m-3-1)
										(m-2-2) edge node[right] {$k$} (m-3-2)
										(m-3-1) edge[barred] node[below] {$L$} (m-3-2);
				\path[transform canvas={shift=(m-2-1)}]
										(m-1-2) edge[cell] node[right] {$\phi$} (m-2-2)
										(m-2-2) edge[cell, transform canvas={yshift=-3pt}] node[right] {$\psi$} (m-3-2);
			\end{tikzpicture}
		\end{displaymath}
		The following hold:
		\begin{enumerate}[label=(\alph*)]
			\item if $\psi$ is cartesian then $\psi \of \phi$ is cartesian if and only if $\phi$ is;
			\item if $\phi$ is opcartesian then $\psi \of \phi$ is opcartesian if and only if $\psi$ is.
		\end{enumerate}
	\end{lemma}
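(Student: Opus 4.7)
The plan is to prove part (a) directly from the universal property of cartesian cells, then obtain part (b) by vertical duality, noting that opcartesian cells in $\K$ are precisely cartesian cells in $\op\K$ (see \defref{definition: duals of double categories}).

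For part (a), I would argue both implications by the standard diagram chase. For the ``if'' direction, assume both $\phi$ and $\psi$ are cartesian. Given a test cell $\chi$ into $L$ with vertical sources that factor as $h \of f'$ and $k \of g'$ through the sources of $\psi \of \phi$, I would first use the cartesian property of $\psi$ to obtain a unique factorisation $\chi = \psi \of \chi_1$ with $\chi_1$ landing in $K$, and then use the cartesian property of $\phi$ to obtain a unique factorisation $\chi_1 = \phi \of \chi_2$. Then $\chi_2$ exhibits $\chi$ as a factorisation through $\psi \of \phi$, and its uniqueness is immediate from the uniqueness at each of the two steps.

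For the ``only if'' direction, assume $\psi$ is cartesian and $\psi \of \phi$ is cartesian; I must show $\phi$ is cartesian. Given a test cell $\chi$ with target $K$ whose boundary matches the source of $\phi$ after precomposition with some $h$, $k$, I would consider $\psi \of \chi$, which has target $L$ and the correct boundary to factor through $\psi \of \phi$. This yields a unique cell $\chi'$ (with target $J$) such that $\psi \of \phi \of \chi' = \psi \of \chi$. By the cartesian property of $\psi$, which makes factorisations through it unique, it follows that $\phi \of \chi' = \chi$. Uniqueness of $\chi'$ as a factorisation of $\chi$ through $\phi$ then follows from the uniqueness of factorisations through $\psi \of \phi$: any other such $\chi''$ would satisfy $\psi \of \phi \of \chi'' = \psi \of \chi$, hence equal $\chi'$.

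Part (b) is the vertical dual: passing to $\op\K$ swaps opcartesian and cartesian cells while reversing the order of vertical composition, so applying (a) in $\op\K$ to $\op\phi$ and $\op\psi$ yields exactly (b) for $\phi$ and $\psi$ in $\K$. I do not expect any real obstacle here; the only care needed is to track source/target boundaries carefully, since cartesian cells are determined by their horizontal source and a pair of vertical morphisms, and to suppress the associators and unitors that formally arise when the boundary cells $h$, $k$, $f'$, $g'$ are composed with $f$, $g$.
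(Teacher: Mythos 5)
Your proof is correct. The paper explicitly omits this proof as ``easy'', and your argument --- the standard two-step factorisation chase for (a), with (b) obtained by passing to the vertical dual $\op\K$, where opcartesian cells become cartesian and the order of vertical composition reverses --- is exactly the intended one; the only blemish is a notational slip where you write the test cell's vertical boundary as $h \of f'$ rather than $(h \of f) \of f'$, but the argument you describe is the right one.
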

	\begin{lemma} \label{invertible cartesian and opcartesian cells}
		Any cartesian cell has a vertical inverse if and only if its vertical source and target are invertible. The same holds for opcartesian cells.
	\end{lemma}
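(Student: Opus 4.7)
The plan is to prove both directions for cartesian cells; the opcartesian case then follows by vertical duality, i.e., by passing to $\op\K$.

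For the forward direction, suppose $\phi$ has a vertical inverse $\psi$. Applying the source and target functors $L, R\colon \K_1 \to \K_0$ to the identities $\psi \of \phi = \id_J$ and $\phi \of \psi = \id_K$ immediately yields inverses for $f$ and $g$ in $\K_0$.

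For the backward direction, assume $\phi$ is cartesian and that $f$ and $g$ are invertible. The strategy is to produce the vertical inverse using the universal property of $\phi$ applied to the identity cell $\id_K$. Viewing $\id_K$ as having vertical sides $f \of \inv f = \id_C$ and $g \of \inv g = \id_D$, the cartesian property of $\phi$ yields a unique cell $\cell\psi K J$ with vertical sides $\inv f$ and $\inv g$ satisfying $\phi \of \psi = \id_K$.

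It remains to verify the other identity $\psi \of \phi = \id_J$. The key observation is that both $\psi \of \phi$ and $\id_J$ are cells with horizontal source $J$ and vertical sides $\inv f \of f = \id_A$ and $\inv g \of g = \id_B$, and they both factor $\phi$ through itself, since
\begin{equation*}
	\phi \of (\psi \of \phi) = (\phi \of \psi) \of \phi = \id_K \of \phi = \phi = \phi \of \id_J.
\end{equation*}
Applying the uniqueness part of the cartesian property of $\phi$ to the factorisation of $\phi$ itself through $\phi$ forces $\psi \of \phi = \id_J$, completing the proof in the cartesian case. I do not anticipate significant obstacles; the only subtlety is remembering that the cartesian property gives both existence and uniqueness of factorisations, and that the uniqueness clause must be invoked to obtain the second inverse identity.
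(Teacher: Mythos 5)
Your proof is correct and follows the same route as the paper, which only sketches the argument: the paper likewise obtains the inverse by factorising $\id_K$ through the cartesian cell, and your verification of the second identity via uniqueness of the factorisation of $\phi$ through itself is the standard completion of that sketch. The forward direction via the functors $L$ and $R$ is also fine.
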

	\begin{proof}[(sketch).]
		The `if' part, for a cartesian cell $\phi$ with invertible vertical boundaries and horizontal target $\hmap KCD$, is proved by factorising the vertical unit cell $\id_K$ through $\phi$.
	\end{proof}
	
	The following three results, which are Theorem 4.1 of \cite{Shulman08}, show how companions, conjoints, restrictions and extensions are related.
	\begin{lemma}[Shulman]
		In a double category consider cells of the form below.
		\begin{displaymath}
			\begin{tikzpicture}[baseline]
   			\matrix(m)[math175em]{A & C \\ C & C \\};
    		\path[map]  (m-1-1) edge[barred] node[above] {$J$} (m-1-2)
        		                edge node[left] {$f$} (m-2-1);
    		\path       (m-1-2) edge[eq] (m-2-2)
        		        (m-2-1) edge[eq] (m-2-2);
    		\path[transform canvas={shift={($(m-1-1)!0.5!(m-2-1)$)}}] (m-1-2) edge[cell] node[right] {$\phi$} (m-2-2);
  		\end{tikzpicture} \quad\qquad \begin{tikzpicture}[baseline]
    		\matrix(m)[math175em]{A & A \\ A & C \\};
    		\path       (m-1-1) edge[eq] (m-1-2)
        		                edge[eq] (m-2-1);
    		\path[map]  (m-1-2) edge node[right] {$f$} (m-2-2)
        		        (m-2-1) edge[barred] node[below] {$J$} (m-2-2);
    		\path[transform canvas={shift={($(m-1-1)!0.5!(m-2-1)$)}}] (m-1-2) edge[cell] node[right] {$\psi$} (m-2-2);
  		\end{tikzpicture} \quad\qquad \begin{tikzpicture}[baseline]
    		\matrix(m)[math175em]{D & C \\ D & D \\};
    		\path[map]  (m-1-1) edge[barred] node[above] {$K$} (m-1-2)
										(m-1-2) edge node[right] {$g$} (m-2-2);
    		\path       (m-1-1) edge[eq] (m-2-1)
    		            (m-2-1) edge[eq] (m-2-2);
    		\path[transform canvas={shift={($(m-1-1)!0.5!(m-2-1)$)}}] (m-1-2) edge[cell] node[right] {$\chi$} (m-2-2);
  		\end{tikzpicture} \quad\qquad \begin{tikzpicture}[baseline]
    		\matrix(m)[math175em]{C & C \\ D & C \\};
    		\path       (m-1-1) edge[eq] (m-1-2)
        		        (m-1-2) edge[eq] (m-2-2);
    		\path[map]  (m-1-1) edge node[left] {$g$} (m-2-1)
        		        (m-2-1) edge[barred] node[below] {$K$} (m-2-2);
    		\path[transform canvas={shift={($(m-1-1)!0.5!(m-2-1)$)}}] (m-1-2) edge[cell] node[right] {$\xi$} (m-2-2);
  		\end{tikzpicture}
  	\end{displaymath}
		The following hold:
  		\begin{enumerate}[label=\textup{(\alph*)}]
  			\item $\phi$ is cartesian if and only if there exists a cell $\psi$ such that $(\phi, \psi)$ defines $J$ as the companion of $f$;
  			\item $\psi$ is opcartesian if and only if there exists a cell $\phi$ such that $(\phi, \psi)$ defines $J$ as the companion of $f$.
  		\end{enumerate}
		Horizontally dual analogues hold for the pair of cells on the right above, obtained by replacing `$\phi$' by `$\chi$', `$\psi$' by `$\xi$', and `$J$ as companion of $f$' by `$K$ as the conjoint of $g$'.
	\end{lemma}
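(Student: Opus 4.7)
The plan is to prove parts (a) and (b); the horizontally dual statements for conjoints then follow at once by passing to $\co\K$, in which companions and conjoints are swapped. Both (a) and (b) are equivalences of the same shape: the ``only if'' direction produces the missing cell as a universal factorisation of $1_f$, while the ``if'' direction uses the two companion identities, together with the interchange law and naturality of the unitors, to build and verify the universal factorisations demanded by cartesianness of $\phi$ or opcartesianness of $\psi$.

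For the ``only if'' direction of (a), I would regard $1_f\colon 1_A \Rar 1_C$ as a cell with left boundary $f \of \id_A$ and right boundary $\id_C \of f$, so that cartesianness of $\phi$ yields a unique $\psi\colon 1_A \Rar J$ with left $\id_A$, right $f$, and $\phi \of \psi = 1_f$. This is the first companion identity. For the second identity $\psi \hc \phi = \id_J$ (modulo unitors), I would show, using interchange, naturality of $\mathfrak l$ and $\mathfrak r$, the first identity, and the coherence relation $\mathfrak l_{1_C} = \mathfrak r_{1_C}$, that
\begin{equation*}
	\phi \of \mathfrak r_J \of (\psi \hc \phi) \of \mathfrak l_J^{-1} = \phi = \phi \of \id_J,
\end{equation*}
and then invoke uniqueness of the cartesian factorisation of $\phi$ through itself. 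For the ``if'' direction of (a), given a cell $\chi\colon H \Rar 1_C$ with left $f \of h$ and right $k$ to be factored through $\phi$, I would define
\begin{equation*}
	\hat\chi := \mathfrak r_J \of \bigpars{(\psi \of 1_h) \hc \chi} \of \mathfrak l_H^{-1},
\end{equation*}
verify $\phi \of \hat\chi = \chi$ using interchange and $\phi \of \psi = 1_f$, and deduce uniqueness by substituting any other factorisation $\chi = \phi \of \tilde\chi$ into the same formula and collapsing via interchange and $\psi \hc \phi = \id_J$.

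Part (b) is dual: the opcartesianness of $\psi$ produces, from $1_f$ viewed now as a cell with left $\id_C \of f$ and right $f \of \id_A$, a unique $\phi\colon J \Rar 1_C$ with $\phi \of \psi = 1_f$, and precomposition with $\psi$ together with uniqueness of the opcartesian factorisation forces $\psi \hc \phi = \id_J$. For the converse, factorisations $\chi = \hat\chi \of \psi$ of cells $\chi\colon 1_A \Rar L$ with left $h$ and right $k \of f$ are constructed by an analogous formula built from $\phi$, $\chi$ and the unitors, with verification following the same pattern as in (a). The one genuine obstacle throughout is the bookkeeping for associators and unitors, which I would handle by systematically suppressing them whenever coherence allows, appealing to the equivalence of every double category with a strict one recorded in Section~1.
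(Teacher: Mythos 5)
Your proposal is correct and follows essentially the same route as the paper's (sketched) proof: the missing companion cell is obtained by factorising $1_f$ through the (op)cartesian cell, and the universal factorisations are produced by horizontally composing with $\psi$ (resp.\ $\phi$), with uniqueness forced by the second companion identity. You merely supply the unitor/interchange bookkeeping and the uniqueness argument for the second companion identity that the paper leaves implicit, and your reduction of the conjoint statements to $\co\K$ matches the paper's appeal to duality.
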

	\begin{proof}[(sketch).]
		For the `if' part of (a): the unique factorisation of a cell $\rho$ through $\phi$, as in \defref{definition: cartesian and opcartesian cells}, is obtained by composing $\rho$ on the left with $\psi$. For the `only if' part: $\psi$ is obtained by factorising $1_f$ through $\phi$. The other assertions are duals of (a).
	\end{proof}
	\begin{lemma}[Shulman] \label{cartesian and opcartesian cells in terms of companions and conjoints}
		In a double category suppose that the pairs of cells $(\ls f \eps, \ls f\eta)$ and $(\eps_g, \eta_g)$, as in the composites below, define the companion of $\map fAC$ and the conjoint of $\map gBD$.
		\begin{displaymath}
			\begin{tikzpicture}[baseline]
				\matrix(m)[math175em]{A & C & D & B \\ C & C & D & D \\};
				\path[map]	(m-1-1) edge[barred] node[above] {$f_*$} (m-1-2)
														edge node[left] {$f$} (m-2-1)
										(m-1-2) edge[barred] node[above] {$K$} (m-1-3)
										(m-1-3) edge[barred] node[above] {$g^*$} (m-1-4)
										(m-1-4) edge node[right] {$g$} (m-2-4)
										(m-2-2) edge[barred] node[below] {$K$} (m-2-3);
				\path				(m-1-2) edge[eq] (m-2-2)
										(m-1-3) edge[eq] (m-2-3)
										(m-2-1) edge[eq] (m-2-2)
										(m-2-3) edge[eq] (m-2-4);
				\path[transform canvas={shift={($(m-1-3)!0.5!(m-2-3)$)}}]	(m-1-1) edge[cell] node[right] {$\ls f\eps$} (m-2-1)
										(m-1-3) edge[cell] node[right] {$\eps_g$} (m-2-3);				
			\end{tikzpicture} \qquad\qquad \begin{tikzpicture}[baseline]
				\matrix(m)[math175em]{B & B & A & A \\ D & B & A & C \\};
				\path[map]	(m-1-1) edge node[left] {$g$} (m-2-1)
										(m-1-2) edge[barred] node[above] {$J$} (m-1-3)
										(m-2-3) edge[barred] node[below] {$f_*$} (m-2-4)
										(m-1-4) edge node[right] {$f$} (m-2-4)
										(m-2-1) edge[barred] node[below] {$g^*$} (m-2-2)
										(m-2-2) edge[barred] node[below] {$J$} (m-2-3);
				\path				(m-1-2) edge[eq] (m-2-2)
										(m-1-3) edge[eq] (m-2-3)
										(m-1-1) edge[eq] (m-1-2)
										(m-1-3) edge[eq] (m-1-4);
				\path[transform canvas={shift={($(m-1-3)!0.5!(m-2-3)$)}}]	(m-1-1) edge[cell] node[right] {$\eta_g$} (m-2-1)
										(m-1-3) edge[cell] node[right] {$\ls f\eta$} (m-2-3);				
			\end{tikzpicture}
		\end{displaymath}
		For any horizontal morphism $\hmap KCD$ the composite on the left above is cartesian, while for any $\hmap JBA$ the composite on the right is opcartesian.
	\end{lemma}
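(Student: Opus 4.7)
The plan is to verify each universal property directly, using the companion/conjoint identities and the interchange law. For the cartesian claim, given a cell $\psi\colon H \to K$ with horizontal source $\hmap HXY$ and vertical boundaries $f \of h$ and $g \of k$ (where $\map hXA$ and $\map kYB$), I propose the factorisation
\[
\psi' = (\ls f\eta \of 1_h) \hc \psi \hc (\eta_g \of 1_k) \colon H \to f_* \hc K \hc g^*
\]
with boundaries $h$ and $k$, suppressing the unitors that identify $1_X \hc H \hc 1_Y$ with $H$. The vertical composites $\ls f\eta \of 1_h$ and $\eta_g \of 1_k$ serve only to adjust the outer boundaries of the companion and conjoint unit cells so that they match $\psi$ on the appropriate sides.

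To verify existence, apply interchange to vertically compose $\psi'$ with the cartesian candidate and then use the companion identity $\ls f\eps \of \ls f\eta = 1_f$ and the conjoint identity $\eps_g \of \eta_g = 1_g$:
\[
(\ls f\eps \hc \id_K \hc \eps_g) \of \psi' = (1_f \of 1_h) \hc (\id_K \of \psi) \hc (1_g \of 1_k) = 1_{f \of h} \hc \psi \hc 1_{g \of k} \iso \psi.
\]
For uniqueness, suppose another cell $\chi\colon H \to f_* \hc K \hc g^*$ with boundaries $h$ and $k$ satisfies $(\ls f\eps \hc \id_K \hc \eps_g) \of \chi = \psi$. Substituting this expression for $\psi$ into the definition of $\psi'$ and applying interchange vertically yields
\[
\psi' = (1_h \hc \chi \hc 1_k) \of (\ls f\eta \hc \ls f\eps \hc \id_K \hc \eps_g \hc \eta_g).
\]
The second companion identity $\ls f\eta \hc \ls f\eps = \id_{f_*}$ and the second conjoint identity $\eps_g \hc \eta_g = \id_{g^*}$ reduce the right-hand composite to $\id_{f_* \hc K \hc g^*}$, giving $\psi' = 1_h \hc \chi \hc 1_k \iso \chi$.

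The opcartesian statement is proved by a dual argument: for $\xi\colon J \to L$ with boundaries $\tilde g \of g$ and $\tilde f \of f$, the unique factorisation through the candidate $\eta_g \hc \id_J \hc \ls f\eta$ is $(1_{\tilde g} \of \eps_g) \hc \xi \hc (1_{\tilde f} \of \ls f\eps)$, and the verifications invoke the same four identities in swapped roles. The main obstacle is purely bookkeeping---ensuring that the suppressed unitors and associators really do cancel once the companion and conjoint identities are applied---but this is handled uniformly by the coherence convention recalled in \S1.
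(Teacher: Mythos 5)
Your proposal is correct and follows essentially the same route as the paper's (sketched) proof: the factorisation is obtained by horizontally composing with $\ls f\eta$ and $\eta_g$ (padded by $1_h$ and $1_k$ to match general vertical boundaries $f \of h$ and $g \of k$), existence follows from interchange together with $\ls f\eps \of \ls f\eta = 1_f$ and $\eps_g \of \eta_g = 1_g$, and uniqueness from the other two identities $\ls f\eta \hc \ls f\eps = \id_{f_*}$ and $\eps_g \hc \eta_g = \id_{g^*}$. The only quibble is notational: in the uniqueness step the two factors of the vertical composite should appear in the opposite order under the paper's convention that $\psi \of \phi$ has $\phi$ on top, i.e.\ it should read $(\ls f\eta \hc \ls f\eps \hc \id_K \hc \eps_g \hc \eta_g) \of (1_h \hc \chi \hc 1_k)$, but the intended computation is the right one.
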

	\begin{proof}[(sketch).] The unique factorisation of any cell $\psi$ through the composite on the left above, as in \defref{definition: cartesian and opcartesian cells}, is obtained by composing $\psi$ on the left with $\ls f\eta$ and on the right with $\eta_g$. Likewise unique factorisations through the composite on the right are obtained by composition on the left with $\eps_g$ and on the right with $\ls f\eps$.
	\end{proof}
	Together the pair of lemmas above implies the following.
	\begin{theorem}[Shulman]
		The following conditions on a double category $\K$ are equivalent:
		\begin{enumerate}[label=(\alph*)]
			\item $\K$ has all companions and conjoints;
			\item $\K$ has all restrictions;
			\item $\K$ has all extensions.
		\end{enumerate}
	\end{theorem}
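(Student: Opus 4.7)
The plan is to prove the three equivalences by combining the two preceding lemmas. The bulk of the work has already been done; Shulman's theorem is essentially a bookkeeping exercise that assembles companions, conjoints, restrictions, and extensions from what we already know.

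For $(\text{a}) \Rightarrow (\text{b})$ and $(\text{a}) \Rightarrow (\text{c})$ I would apply \lemref{cartesian and opcartesian cells in terms of companions and conjoints} directly. Given $\map fAC$, $\map gBD$, and any $\hmap KCD$, choose a companion $f_*$ of $f$ and a conjoint $g^*$ of $g$ (which exist by~(a)); the lemma then exhibits a cartesian cell $f_* \hc K \hc g^* \Rightarrow K$, so that $K(f, g) = f_* \hc K \hc g^*$. Likewise, for any $\hmap JAB$ the lemma's second half, applied with $f$ and $g$ appropriately relabelled, shows that $g_* \hc J \hc f^*$ is the extension of $J$ along $g$ and $f$, after first taking the companion of $g$ and conjoint of $f$.

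For $(\text{b}) \Rightarrow (\text{a})$, given a vertical $\map fAC$ I form $f_* := 1_C(f, \id_C)$, the restriction of $1_C$ along $f$ and $\id_C$; the accompanying cartesian cell has precisely the shape of the cell `$\phi$' in the previous lemma, so by part~(a) of that lemma $f_*$ is the companion of $f$. For the conjoint of $\map gCD$ I dually take $g^* := 1_D(\id_D, g)$ and invoke the horizontal dual of part~(a). Conversely, for $(\text{c}) \Rightarrow (\text{a})$ I construct $f_*$ as the extension of $1_A$ along $\id_A$ and $f$, and $g^*$ as the extension of $1_C$ along $g$ and $\id_C$; the corresponding opcartesian cells have the shapes of `$\psi$' and `$\xi$' respectively, and so produce the required companions and conjoints by parts~(b) of the previous lemma (and its horizontal dual).

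There is no serious obstacle; the main care required is to keep straight the directions of the horizontal morphisms $f_*$, $f^*$, $g_*$, $g^*$ and to match each construction to the correct cell shape in \lemref{cartesian and opcartesian cells in terms of companions and conjoints} and its horizontal dual. Once the four possible boundary configurations are correctly tabulated against the four clauses of the first lemma, each implication reduces to a one-line invocation.
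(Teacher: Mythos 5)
Your proposal is correct and is essentially the paper's own argument: the theorem is stated with no proof beyond the remark that the two preceding lemmas together imply it, and your assembly of the four implications from those lemmas is exactly that assembly. One bookkeeping slip in the (a)$\Rightarrow$(c) step, of precisely the kind you warn about: for $\hmap JAB$, $\map fAC$ and $\map gBD$ the extension is $f^* \hc J \hc g_*$ (conjoint on the source side, companion on the target side), not $g_* \hc J \hc f^*$, which does not even compose.
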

		Next is the definition of equipment.
	\begin{definition}
		An \emph{equipment} is a double category together with, for each vertical morphism $\map fAC$, two specified pairs of cells $(\ls f\eta, \ls f\eps)$ and $(\eta_f, \eps_f)$ that define the companion $\hmap{f_*}AC$ and the conjoint $\hmap{f^*}CA$ respectively, as in \defref{definition: companions and conjoints}. 
	\end{definition}
	By \lemref{cartesian and opcartesian cells in terms of companions and conjoints} above, a specification of companions and conjoints induces a specification of restrictions and extensions, by taking the restriction of $\hmap KCD$ along $\map fAC$ and $\map gBD$ to be $K(f, g) = f_* \hc K \hc g^*$, and likewise by taking the extension of $\hmap JAB$ along $f$ and $g$ to be $f^* \hc J \hc g_*$. Unless specified otherwise we will always mean these particular restrictions and extensions, along with their defining cells as given by \lemref{cartesian and opcartesian cells in terms of companions and conjoints}, when working in an equipment.
	\begin{example}
		The double category $\enProf\V$ of $\V$-enriched profunctors is an equipment with companions and conjoints as given in \exref{example: enriched companions}.
	\end{example}
	
	\begin{example} \label{example: equipment of internal profunctors}
		The double category $\inProf\E$ can be made into an equipment as well. Briefly, the companion $\hmap{f_*}AC$ of an internal functor $\map fAC$ can be taken to be the span
		\begin{displaymath}
			f_* = \bigbrks{A_0 \xlar{\pi_1} A_0 \pb{f_0}{d_0} C \xrar{\pi_2} C \xrar{d_1} C_0},
		\end{displaymath}
		where $A_0 \pb{f_0}{d_0} C$ denotes the pullback of $A_0 \xrar{f_0} C_0 \xlar{d_0} C$, with projections $\pi_1$ and $\pi_2$. If $\E = \Set$ then $f_*$ is the set consisting of morphisms $\map u{fa}c$ in $C$. The actions that make $f_*$ into an internal profunctor are induced by the multiplication of $C$, and it is straightforward to show that the projection $\map{\pi_2}{f_*} C$ defines a cartesian internal transformation $\cell{\ls f\eps}{f_*}{1_C}$. A conjoint $\hmap{f^*}CA$ for $f$ can be given similarly.
	\end{example}
	
	Consider a cell $\phi$ in a double category as in the middle below, and assume that the companions of $f$ and $g$ exist. Then the opcartesian and cartesian cell in the composite on the left below exist by \lemref{cartesian and opcartesian cells in terms of companions and conjoints}, and we write $\phi_*$ for the horizontal cell that is obtained by factorising $\phi$ as in the identity on the left below. Dually, if the conjoints of $f$ and $g$ exist then we write $\phi^*$ for the factorisation of $\phi$ as on the right. Especially the factorisations $\phi_*$ will be important later on.
	\begin{equation} \label{companion and conjoint factorisations}
		\begin{tikzpicture}[textbaseline]
			\matrix(m)[math175em]{A & & B \\ A & B & D \\ A & C & D \\ C & & D \\};
			\path[map]	(m-1-1) edge[barred] node[above] {$J$} (m-1-3)
									(m-1-3) edge node[right] {$g$} (m-2-3)
									(m-2-1) edge[barred] node[below] {$J$} (m-2-2)
									(m-2-2) edge[barred] node[below] {$g_*$} (m-2-3)
									(m-3-1) edge[barred] node[below] {$f_*$} (m-3-2)
													edge node[left] {$f$} (m-4-1)
									(m-3-2) edge[barred] node[below] {$K$} (m-3-3)
									(m-4-1) edge[barred] node[below] {$K$} (m-4-3)
									(m-2-2) edge[cell] node[right] {$\phi_*$} (m-3-2);
			\path				(m-1-1) edge[eq] (m-2-1)
									(m-2-1) edge[eq] (m-3-1)
									(m-2-3) edge[eq] (m-3-3)
									(m-3-3) edge[eq] (m-4-3);
			\draw				($(m-1-1)!0.5!(m-2-3)$) node {\oc}
									($(m-3-1)!0.5!(m-4-3)$) node {\cc};
		\end{tikzpicture} = \begin{tikzpicture}[textbaseline]
    	\matrix(m)[math175em]{A & B \\ C & D \\};
    	\path[map]  (m-1-1) edge[barred] node[above] {$J$} (m-1-2)
                       		edge node[left] {$f$} (m-2-1)
		               (m-1-2) edge node[right] {$g$} (m-2-2)
    	            (m-2-1) edge[barred] node[below] {$K$} (m-2-2);
    	\path[transform canvas={shift={($(m-1-1)!0.5!(m-2-1)$)}}] (m-1-2) edge[cell] node[right] {$\phi$} (m-2-2);
  	\end{tikzpicture} = \begin{tikzpicture}[textbaseline]
			\matrix(m)[math175em]{A & & B \\ C & A & B \\ C & D & B \\ C & & D \\};
			\path[map]	(m-1-1) edge[barred] node[above] {$J$} (m-1-3)
													edge node[left] {$f$} (m-2-1)
									(m-2-1) edge[barred] node[below] {$f^*$} (m-2-2)
									(m-2-2) edge[barred] node[below] {$J$} (m-2-3)
									(m-3-1) edge[barred] node[below] {$K$} (m-3-2)
									(m-3-2) edge[barred] node[below] {$g^*$} (m-3-3)
									(m-3-3)	edge node[right] {$g$} (m-4-3)
									(m-4-1) edge[barred] node[below] {$K$} (m-4-3)
									(m-2-2) edge[cell] node[right] {$\phi^*$} (m-3-2);
			\path				(m-1-3) edge[eq] (m-2-3)
									(m-2-1) edge[eq] (m-3-1)
									(m-2-3) edge[eq] (m-3-3)
									(m-3-1) edge[eq] (m-4-1);
			\draw				($(m-1-1)!0.5!(m-2-3)$) node {\oc}
									($(m-3-1)!0.5!(m-4-3)$) node {\cc};
		\end{tikzpicture}
	\end{equation}
	
	\begin{lemma} \label{companion and conjoint factorisations functoriality}
		In a double category the correspondence $\phi \leftrightarrow \phi^*$ given above is functorial in the sense that, for horizontally composable cells as on the left below such that the conjoints of $f$, $g$ and $h$ exist, the identity on the right holds.
		\begin{displaymath}
			\begin{tikzpicture}[textbaseline]
				\matrix(m)[math175em]{A & B & E \\ C & D & F \\};
				\path[map]	(m-1-1) edge[barred] node[above] {$J$} (m-1-2)
														edge node[left] {$f$} (m-2-1)
										(m-1-2) edge[barred] node[above] {$H$} (m-1-3)
														edge node[right] {$g$} (m-2-2)
										(m-1-3) edge node[right] {$h$} (m-2-3)
										(m-2-1) edge[barred] node[below] {$K$} (m-2-2)
										(m-2-2) edge[barred] node[below] {$L$} (m-2-3);
				\path[transform canvas={shift=($(m-1-1)!0.5!(m-2-2)$)}]
										(m-1-2) edge[cell] node[right] {$\phi$} (m-2-2)
										(m-1-3) edge[cell] node[right] {$\chi$} (m-2-3);
			\end{tikzpicture} \quad \begin{tikzpicture}[textbaseline]
				\matrix(m)[math175em]{C & A & B & E \\ C & D & B & E \\ C & D & F & E \\};
				\path[map]	(m-1-1) edge[barred] node[above] {$f^*$} (m-1-2)
										(m-1-2) edge[barred] node[above] {$J$} (m-1-3)
										(m-1-3) edge[barred] node[above] {$H$} (m-1-4)
										(m-2-1) edge[barred] node[below] {$K$} (m-2-2)
										(m-2-2) edge[barred] node[below] {$g^*$} (m-2-3)
										(m-2-3) edge[barred] node[below] {$H$} (m-2-4)
										(m-3-1) edge[barred] node[below] {$K$} (m-3-2)
										(m-3-2) edge[barred] node[below] {$L$} (m-3-3)
										(m-3-3) edge[barred] node[below] {$h^*$} (m-3-4)
										(m-1-2) edge[cell] node[right] {$\phi^*$} (m-2-2)
										(m-2-3) edge[cell] node[right] {$\chi^*$} (m-3-3);
				\path				(m-1-1) edge[eq] (m-2-1)
										(m-1-3) edge[eq] (m-2-3)
										(m-1-4) edge[eq] (m-2-4)
										(m-2-1) edge[eq] (m-3-1)
										(m-2-2) edge[eq] (m-3-2)
										(m-2-4) edge[eq] (m-3-4);
			\end{tikzpicture} = \begin{tikzpicture}[textbaseline]
				\matrix(m)[math175em]{C & A & B & E \\ C & D & F & E \\};
				\path[map]	(m-1-1) edge[barred] node[above] {$f^*$} (m-1-2)
										(m-1-2) edge[barred] node[above] {$J$} (m-1-3)
										(m-1-3) edge[barred] node[above] {$H$} (m-1-4)
										(m-2-1) edge[barred] node[below] {$K$} (m-2-2)
										(m-2-2) edge[barred] node[below] {$L$} (m-2-3)
										(m-2-3) edge[barred] node[below] {$h^*$} (m-2-4);
				\path				(m-1-1) edge[eq] (m-2-1)
										(m-1-4) edge[eq] (m-2-4);
				\path[map, transform canvas={shift={($(m-1-3)!0.5!(m-2-3)$)}}]	(m-1-2) edge[cell] node[right] {$(\phi \hc \chi)^*$} (m-2-2);
			\end{tikzpicture} 
		\end{displaymath}
		By horizontal duality the correspondence $\phi \leftrightarrow \phi_*$ is similarly functorial.
	\end{lemma}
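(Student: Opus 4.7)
The plan is to use uniqueness. By \defref{definition: cartesian and opcartesian cells}, $(\phi \hc \chi)^*$ is the unique horizontal cell such that
\begin{equation*}
	C \of (\phi \hc \chi)^* \of O = \phi \hc \chi,
\end{equation*}
where $O$ is the opcartesian extension cell for $f^* \hc (J \hc H)$ along $f$ and $\id_E$, and $C$ is the cartesian restriction cell for $(K \hc L) \hc h^*$ along $\id_C$ and $h$. Writing these explicitly in terms of conjoint cells via \lemref{cartesian and opcartesian cells in terms of companions and conjoints} and applying associators, we identify (modulo coherence) $O = O_\phi \hc \id_H$ and $C = \id_K \hc C_\chi$, where $O_\phi$ and $C_\chi$ are the opcartesian and cartesian cells appearing in the factorisations defining $\phi^*$ and $\chi^*$. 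It therefore suffices to verify
\begin{equation*}
	(\id_K \hc C_\chi) \of (\id_K \hc \chi^*) \of (\phi^* \hc \id_H) \of (O_\phi \hc \id_H) = \phi \hc \chi.
\end{equation*}

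Two applications of the interchange law, to the top and the bottom pair of rows of the left-hand side, reduce it to $\bigpars{\id_K \hc (C_\chi \of \chi^*)} \of \bigpars{(\phi^* \of O_\phi) \hc \id_H}$. The key observation is that $\phi^* \of O_\phi = \phi \hc \eta_g$ and $C_\chi \of \chi^* = \eps_g \hc \chi$, each modulo unitors: for the first, the candidate $\phi \hc \eta_g$ satisfies $C_\phi \of (\phi \hc \eta_g) = (\id_K \of \phi) \hc (\eps_g \of \eta_g) = \phi \hc 1_g = \phi$ by interchange and the conjoint identity $\eps_g \of \eta_g = 1_g$, whence uniqueness of the cartesian factorisation of $\phi$ through $C_\phi$ identifies $\phi^* \of O_\phi$ with $\phi \hc \eta_g$; the second identity follows dually from the opcartesianness of $O_\chi$. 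Substituting these simplifications yields $(\id_K \hc \eps_g \hc \chi) \of (\phi \hc \eta_g \hc \id_H)$, a $2 \times 3$ grid that is interchange-composable column-by-column (the columnwise vertical composites being $\phi$, $\eps_g \of \eta_g = 1_g$, and $\chi$); a further application of interchange therefore produces $\phi \hc 1_g \hc \chi$, which equals $\phi \hc \chi$ modulo unitors. This establishes the identity, and the horizontally dual statement for the correspondence $\phi \leftrightarrow \phi_*$ follows at once by applying the same argument in $\co \K$.

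The principal obstacle is the systematic bookkeeping of associators and unitors throughout the derivation; however, by the strictification theorem of Grandis and Par\'e cited in \S1 we may assume without loss of generality that $\K$ is strict, so that all coherence cells become identities and the entire proof reduces to the iterated applications of the interchange law described above.
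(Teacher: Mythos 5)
Your proof is correct and rests on exactly the same ingredients as the paper's: the explicit description of the relevant opcartesian and cartesian cells as $\eta_f \hc \id$ and $\id \hc \eps_h$ (from \lemref{cartesian and opcartesian cells in terms of companions and conjoints}), the interchange law, and the conjoint identity $\eps_g \of \eta_g = 1_g$. The paper is merely more direct---it records the closed forms $\phi^* = \eps_f \hc \phi \hc \eta_g$, $\chi^* = \eps_g \hc \chi \hc \eta_h$ and $(\phi \hc \chi)^* = \eps_f \hc \phi \hc \chi \hc \eta_h$ and cancels $\eps_g \of \eta_g$ in one interchange step, where you reach the same cancellation via uniqueness of factorisations through the (op)cartesian cells---so the two arguments are essentially the same.
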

	\begin{proof}
		The opcartesian and cartesian cell in the composite on the right of \eqref{companion and conjoint factorisations} are defined by the horizontal composites $\eta_f \hc \id_J$ and $\id_K \hc \eps_g$ respectively, so that the factorisation of $\phi$ is given by the composite $\phi^* = \eps_f \hc \phi \hc \eta_g$; compare the proof of \lemref{cartesian and opcartesian cells in terms of companions and conjoints}. Likewise the factorisations of $\chi$ and $\phi \hc \chi$ are given by $\chi^* = \eps_g \hc \chi \hc \eta_h$ and $(\phi \hc \chi)^* = \eps_f \hc \phi \hc \chi \hc \eta_h$, so that the identity follows from the conjoint identity $\eps_g \of \eta_g = 1_g$.
	\end{proof}
	
	We end this section with a useful consequence of \lemref{cartesian and opcartesian cells in terms of companions and conjoints}.
	\begin{lemma} \label{composing cartesian and opcartesian cells}
		In an equipment consider the following horizontal composites.
		\begin{displaymath}
			\begin{tikzpicture}[baseline]
				\matrix(m)[math175em]{A & B & E \\ C & D & E \\};
				\path[map]	(m-1-1) edge[barred] node[above] {$J$} (m-1-2)
														edge node[left] {$f$} (m-2-1)
										(m-1-2) edge[barred] node[above] {$H$} (m-1-3)
														edge node[right] {$g$} (m-2-2)
										(m-2-1) edge[barred] node[below] {$K$} (m-2-2)
										(m-2-2) edge[barred] node[below] {$L$} (m-2-3);
				\path				(m-1-3) edge[eq] (m-2-3);
				\path[transform canvas={shift=($(m-1-1)!0.5!(m-2-2)$)}]
										(m-1-2) edge[cell] node[right] {$\phi$} (m-2-2)
										(m-1-3) edge[cell] node[right] {$\chi$} (m-2-3);
			\end{tikzpicture} \qquad\qquad\qquad\qquad \begin{tikzpicture}[baseline]
				\matrix(m)[math175em]{A & B & E \\ A & D & F \\};
				\path[map]	(m-1-1) edge[barred] node[above] {$J$} (m-1-2)
										(m-1-2) edge[barred] node[above] {$H$} (m-1-3)
														edge node[right] {$g$} (m-2-2)
										(m-1-3) edge node[right] {$h$} (m-2-3)
										(m-2-1) edge[barred] node[below] {$K$} (m-2-2)
										(m-2-2) edge[barred] node[below] {$L$} (m-2-3);
				\path				(m-1-1) edge[eq] (m-2-1);
				\path[transform canvas={shift=($(m-1-1)!0.5!(m-2-2)$)}]
										(m-1-2) edge[cell] node[right] {$\psi$} (m-2-2)
										(m-1-3) edge[cell] node[right] {$\xi$} (m-2-3);
			\end{tikzpicture}
		\end{displaymath}
		The following hold for the composite on the left.
		\begin{enumerate}[label=(\alph*)]
			\item If $\phi$ is cartesian and $\chi$ is opcartesian then $\phi \hc \chi$ is cartesian.
			\item If $\phi$ is opcartesian and $\chi$ is cartesian then $\phi \hc \chi$ is opcartesian.
		\end{enumerate}
		For the composite on the right horizontally dual analogues hold, obtained by replacing `$\,\phi$' by `$\,\xi$' and `$\,\chi$' by `$\,\psi$'.
	\end{lemma}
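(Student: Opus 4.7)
The plan is to reduce both cells to canonical form via \lemref{cartesian and opcartesian cells in terms of companions and conjoints}, and then to exploit the companion/conjoint identities to collapse the horizontal composite back into canonical form. This reduction is justified by \lemref{invertible cartesian and opcartesian cells}: two cartesian (respectively opcartesian) cells with the same boundary data differ by an invertible horizontal cell, which is itself both cartesian and opcartesian, so horizontally composing with such an iso cannot change the (op)cartesian status of the resulting composite.

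For (a), I would replace $\phi$ by the canonical cartesian cell $\ls f\eps \hc \id_K \hc \eps_g$ with source $f_* \hc K \hc g^*$, and replace $\chi$ by the canonical opcartesian cell with target $g^* \hc H \hc (\id_E)_*$. Since $(\id_E)_*$ may be chosen to be $1_E$ with $\ls{\id_E}\eta$ the identity, $\chi$ reduces further to $\eta_g \hc \id_H$ modulo a unitor. Horizontally composing the two then produces adjacent occurrences of $\eps_g$ and $\eta_g$ in the middle, which by the conjoint identity $\eps_g \hc \eta_g = \id_{g^*}$ collapse to leave $\ls f\eps \hc \id_{K \hc g^* \hc H}$. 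This is precisely the canonical cartesian cell exhibiting $(K \hc L)(f, \id_E)$, and hence is cartesian by \lemref{cartesian and opcartesian cells in terms of companions and conjoints}.

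Case (b) proceeds by the formally dual strategy: reduce $\phi$ to $\eta_f \hc \id_J \hc \ls g\eta$ and $\chi$ to $\ls g\eps \hc \id_L$, then invoke the companion identity $\ls g\eta \hc \ls g\eps = \id_{g_*}$ to recognise $\phi \hc \chi$ as the canonical opcartesian cell $\eta_f \hc \id_{J \hc g_* \hc L}$. The horizontally dual assertions for the right-hand composite follow by interpreting (a) and (b) in the horizontal dual $\co\K$ of \defref{definition: duals of double categories}, in which cartesian and opcartesian cells correspond to those of $\K$ with horizontal orientations swapped. The principal nuisance throughout is bookkeeping with associators and unitors, but by the strictification of Grandis and Par\'e \cite{Grandis-Pare99} one may simply work in a strict double category where these coherence cells drop out.
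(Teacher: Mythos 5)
Your proof is correct and follows essentially the same route as the paper's: reduce each (op)cartesian cell to the canonical composite of companion/conjoint cells from \lemref{cartesian and opcartesian cells in terms of companions and conjoints}, collapse the adjacent $\eps_g$, $\eta_g$ (resp.\ $\ls g\eta$, $\ls g\eps$) via the conjoint (resp.\ companion) identity, and transfer back along the invertible horizontal comparison cells; the paper simply treats case (b) as the primary one and obtains (a) by vertical duality, whereas you do (a) first. One small correction: the fact you need for the reduction step is not \lemref{invertible cartesian and opcartesian cells} (which concerns \emph{vertical} invertibility of cartesian cells) but the remark following \defref{definition: cartesian and opcartesian cells} that any two cartesian (or opcartesian) cells defining the same restriction (or extension) factor through each other as invertible horizontal cells, combined with the pasting lemma.
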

	\begin{proof}
		Proving (b) for the composite on the left suffices, since proving (a) is vertically dual while proving (a) and (b) for the composite on the right is horizontally dual to proving them for the composite on the left. So we assume $\phi$ to be opcartesian and $\chi$ to be cartesian. By \lemref{cartesian and opcartesian cells in terms of companions and conjoints} the horizontal composite $\eta_f \hc \id_J \hc \ls g\eta$ forms, like $\phi$, an opcartesian cell $\cell\eta J{f^* \hc J \hc g_*}$ that defines the extension of $J$ along $f$ and $g$, which thus factors through $\phi$ as an invertible horizontal cell $\eta' \colon K \iso f^* \hc J \hc g_*$. Likewise $\cell{\eps = \ls g\eps \hc \id_L}{g_* \hc L}L$ forms a cartesian cell that factors through $\chi$ as $\eps'\colon g_* \hc L \iso H$. Composing $\phi \hc \chi$ with the isomorphisms $\eta'$ and $\eps'$ gives
		\begin{displaymath}
			(\eta' \of \phi) \hc (\chi \of \eps') = \eta_f \hc \id_J \hc \ls g\eta \hc \ls g\eps \hc \id_L = \eta_f \hc \id_J \hc \id_{g_*} \hc \id_L,
		\end{displaymath}
		where the second identity follows from the companion identity $\ls g\eta \hc \ls g\eps = \id_{g_*}$. By \lemref{cartesian and opcartesian cells in terms of companions and conjoints} the composite on the right-hand side is opcartesian, so that $\phi \hc \chi$ is too.
	\end{proof}

	\section{Kan extensions in double categories}
	We are now ready to describe various notions of Kan extension in double categories. In particular we shall extend a variation of Wood's notion of `indexed (co-)limit' in `bicategories equipped with abstract proarrows', that was given in \cite{Wood82}, to a notion of pointwise Kan extension in double categories.
	
	Let $\K$ be an equipment and consider the $2$-category $V(\K)$ consisting of its objects, vertical morphisms and vertical cells. Recall that a vertical cell $\eps$ as on the left below defines $r$ as the right Kan extension of $d$ along $j$ in $V(\K)$ if every cell $\psi$ as on the right factors uniquely through $\eps$ as shown. See for example Section X.3 of \cite{MacLane98}, where right Kan extensions are defined in the $2$-category $\Cat$ of categories, functors and natural transformations.
	\begin{displaymath}
		\begin{tikzpicture}[textbaseline]
    	\matrix(m)[math175em]{A & A \\ B & \phantom B \\ M & M \\};
    	\path[map]  (m-1-1) edge node[left] {$j$} (m-2-1)
       		        (m-1-2) edge node[right] {$d$} (m-3-2)
       		        (m-2-1) edge node[left] {$r$} (m-3-1);
      \path				(m-1-1) edge[eq] (m-1-2)
      						(m-3-1) edge[eq] (m-3-2);
    	\path[transform canvas={shift={($(m-2-1)!0.5!(m-1-1)$)}}] (m-2-2) edge[cell] node[right] {$\eps$} (m-3-2);
  	\end{tikzpicture} \qquad\qquad\qquad \begin{tikzpicture}[textbaseline]
    	\matrix(m)[math175em]{A & A \\ B & \phantom B \\ M & M \\};
    	\path[map]  (m-1-1) edge node[left] {$j$} (m-2-1)
       		        (m-1-2) edge node[right] {$d$} (m-3-2)
       		        (m-2-1) edge node[left] {$s$} (m-3-1);
      \path				(m-1-1) edge[eq] (m-1-2)
      						(m-3-1) edge[eq] (m-3-2);
    	\path[transform canvas={shift={($(m-2-1)!0.5!(m-1-1)$)}}] (m-2-2) edge[cell] node[right] {$\psi$} (m-3-2);
  	\end{tikzpicture} = \begin{tikzpicture}[textbaseline]
  		\matrix(m)[math175em]{A & A & A \\ B & B & \phantom B \\ M & M & M \\};
  		\path[map]	(m-1-1) edge node[left] {$j$} (m-2-1)
  								(m-1-2) edge node[right] {$j$} (m-2-2)
  								(m-1-3) edge node[right] {$d$} (m-3-3)
  								(m-2-1) edge node[left] {$s$} (m-3-1)
  								(m-2-2) edge node[right] {$r$} (m-3-2);
  		\path				(m-1-1) edge[eq] (m-1-2)
  								(m-1-2) edge[eq] (m-1-3)
  								(m-2-1) edge[eq] (m-2-2)
  								(m-3-1) edge[eq] (m-3-2)
  								(m-3-2) edge[eq] (m-3-3);
  		\path[transform canvas={shift={($(m-2-2)!0.5!(m-2-3)$)}}] (m-1-1) edge[cell] node[right] {$1_j$} (m-2-1)
  								(m-2-1) edge[cell] node[right] {$\psi'$} (m-3-1);
  		\path[transform canvas={shift={($(m-2-2)!0.5!(m-3-3)$)}}] (m-1-2) edge[cell] node[right] {$\eps$} (m-2-2);
  	\end{tikzpicture}
	\end{displaymath}
	Factorising through the opcartesian cell $\eta_j$ defining the conjoint $\hmap{j^*}BA$, we see that the unique factorisations above correspond exactly to unique factorisations
	\begin{displaymath}
		\begin{tikzpicture}[textbaseline]
  		\matrix(m)[math175em]{B & A \\ M & M \\};
  		\path[map]  (m-1-1) edge[barred] node[above] {$j^*$} (m-1-2)
													edge node[left] {$s$} (m-2-1)
									(m-1-2) edge node[right] {$d$} (m-2-2);
			\path				(m-2-1) edge[eq] (m-2-2);
			\path[transform canvas={shift={($(m-1-2)!(0,0)!(m-2-2)$)}}] (m-1-1) edge[cell] node[right] {$\phi$} (m-2-1);
		\end{tikzpicture} = \begin{tikzpicture}[textbaseline]
			\matrix(m)[math175em]{B & B & A \\ M & M & M \\};
			\path[map]	(m-1-1) edge node[left] {$s$} (m-2-1)
									(m-1-2) edge[barred] node[above] {$j^*$} (m-1-3)
													edge node[right] {$r$} (m-2-2)
									(m-1-3) edge node[right] {$d$} (m-2-3);
			\path				(m-1-1) edge[eq] (m-1-2)
									(m-2-1) edge[eq] (m-2-2)
									(m-2-2) edge[eq] (m-2-3);
			\path[transform canvas={shift=($(m-1-1)!0.5!(m-2-2)$)}]
									(m-1-2) edge[cell] node[right] {$\phi'$} (m-2-2)
									(m-1-3) edge[cell] node[right] {$\eps'$} (m-2-3);
		\end{tikzpicture}
	\end{displaymath}
	in $\K$, where $\eps'$ is the unique factorisation of $\eps$ through $\eta_j$. This observation leads us to the following definition, which was given by Grandis and Par\'e in \cite{Grandis-Pare08}.
	\begin{definition}[Grandis and Par\'e] \label{definition: right Kan extension}
		Let $\map dBM$ and $\hmap JAB$ be morphisms in a double category. The cell $\eps$ in the right-hand side below is said to define $r$ as the \emph{right Kan extension of $d$ along $J$} if every cell $\phi$ below factors uniquely through $\eps$ as shown.
		\begin{displaymath}
			\begin{tikzpicture}[textbaseline]
  			\matrix(m)[math175em]{A & B \\ M & M \\};
  			\path[map]  (m-1-1) edge[barred] node[above] {$J$} (m-1-2)
														edge node[left] {$s$} (m-2-1)
										(m-1-2) edge node[right] {$d$} (m-2-2);
				\path				(m-2-1) edge[eq] (m-2-2);
				\path[transform canvas={shift={($(m-1-2)!(0,0)!(m-2-2)$)}}] (m-1-1) edge[cell] node[right] {$\phi$} (m-2-1);
			\end{tikzpicture} = \begin{tikzpicture}[textbaseline]
				\matrix(m)[math175em]{A & A & B \\ M & M & M \\};
				\path[map]	(m-1-1) edge node[left] {$s$} (m-2-1)
										(m-1-2) edge[barred] node[above] {$J$} (m-1-3)
														edge node[right] {$r$} (m-2-2)
										(m-1-3) edge node[right] {$d$} (m-2-3);
				\path				(m-1-1) edge[eq] (m-1-2)
										(m-2-1) edge[eq] (m-2-2)
										(m-2-2) edge[eq] (m-2-3);
				\path[transform canvas={shift=($(m-1-1)!0.5!(m-2-2)$)}]
										(m-1-2) edge[cell] node[right] {$\phi'$} (m-2-2)
										(m-1-3) edge[cell] node[right] {$\eps$} (m-2-3);
			\end{tikzpicture}
		\end{displaymath}
	\end{definition}
	As usual any two cells defining the same right Kan extension factor uniquely through each other as invertible vertical cells. We remark that the definition of Grandis and Par\'e is in fact more general, as it allows cells $\eps$ whose horizontal target is arbitrary where we assume it to be a horizontal unit.
	
	As is shown by the discussion above, the preceding definition generalises the notion of right Kan extension in $2$-categories as follows.
	\begin{proposition} \label{right Kan extensions along conjoints as right Kan extensions in V(K)}
		In a double category $\K$ consider a vertical cell $\eps$, as on the left below, and assume that the conjoint $\hmap{j^*}BA$ of $j$ exists.
		\begin{displaymath}
			\begin{tikzpicture}[textbaseline]
    		\matrix(m)[math175em]{A & A \\ B & \phantom A \\ M & M \\};
    		\path[map]  (m-1-1) edge node[left] {$j$} (m-2-1)
       			        (m-1-2) edge node[right] {$d$} (m-3-2)
       			        (m-2-1) edge node[left] {$r$} (m-3-1);
      	\path				(m-1-1) edge[eq] (m-1-2)
      							(m-3-1) edge[eq] (m-3-2);
    		\path[transform canvas={shift={($(m-2-1)!0.5!(m-1-1)$)}}] (m-2-2) edge[cell] node[right] {$\eps$} (m-3-2);
  		\end{tikzpicture} = \begin{tikzpicture}[textbaseline]
				\matrix(m)[math175em]{A & A \\ B & A \\ M & M \\};
				\path[map]	(m-1-1) edge node[left] {$j$} (m-2-1)
										(m-2-1) edge[barred] node[below] {$j^*$} (m-2-2)
														edge node[left] {$r$} (m-3-1)
										(m-2-2) edge node[right] {$d$} (m-3-2);
				\path				(m-1-1) edge[eq] (m-1-2)
										(m-1-2) edge[eq] (m-2-2)
										(m-3-1) edge[eq] (m-3-2);
				\path[transform canvas={shift=(m-2-1), yshift=-2pt}]	(m-2-2) edge[cell] node[right] {$\eps'$} (m-3-2);
				\draw				($(m-1-1)!0.5!(m-2-2)$)	node {\oc};
			\end{tikzpicture}
		\end{displaymath}
		The vertical cell $\eps$ defines $r$ as the right Kan extension of $d$ along $j$ in $V(\K)$ precisely if its factorisation $\eps'$, as shown, defines $r$ as the right Kan extension of $d$ along $j^*$ in $\K$.
	\end{proposition}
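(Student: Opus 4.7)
The plan is to exploit the universal property of the opcartesian cell $\eta_j$ defining the conjoint $j^*$. For each vertical morphism $\map sBM$, opcartesianness provides a bijection between vertical cells $\cell\psi{s \of j}d$ in $\K$---equivalently, $2$\ndash cells $s \of j \Rar d$ in $V(\K)$---and cells $\bar\psi$ in $\K$ with horizontal source $j^*$, horizontal target $1_M$, vertical source $s$ and vertical target $d$, via $\psi = \bar\psi \of \eta_j$. By construction of $\eps'$ as the factorisation of $\eps$ through $\eta_j$, this bijection sends $\eps$ to $\eps'$.

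The crux of the argument is to show that, under this bijection, a factorisation $\psi = \eps \of (\phi' * j)$ in $V(\K)$ corresponds precisely to a factorisation $\bar\psi = \phi' \hc \eps'$ in $\K$, with the same vertical cell $\cell{\phi'}sr$ appearing in each. Translating to $\K$ using \defref{2-category and bicategory in a double category}---vertical $2$\ndash composition in $V(\K)$ is horizontal composition in $\K$, and the whiskering $\phi' * j$ unfolds as the vertical composite $\phi' \of 1_j$ with the horizontal unit cell $1_j$ on top---and then applying the interchange law yields
\begin{displaymath}
	\eps \of (\phi' * j) = (\phi' \of 1_j) \hc (\eps' \of \eta_j) = (\phi' \hc \eps') \of (1_j \hc \eta_j).
\end{displaymath}
Since $1_j \hc \eta_j$ agrees with $\eta_j$ up to the unitors on its horizontal boundaries, coherence permits us to substitute $\eta_j$ for $1_j \hc \eta_j$, so the right-hand side equals $(\phi' \hc \eps') \of \eta_j$; this composite corresponds to $\phi' \hc \eps'$ under the bijection.

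The proposition now follows: the universal property characterising $\eps$ as a right Kan extension in $V(\K)$---existence and uniqueness of $\phi'$ with $\psi = \eps \of (\phi' * j)$ for each $\psi$---translates verbatim, via the bijection, to the universal property in \defref{definition: right Kan extension} characterising $\eps'$ as a right Kan extension of $d$ along $j^*$ in $\K$. The main bookkeeping obstacle is the coherence argument identifying $1_j \hc \eta_j$ with $\eta_j$ up to unitors, but this is routine given the conventions on associators and unitors set up earlier.
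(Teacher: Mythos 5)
Your proof is correct and follows essentially the same route as the paper, which obtains this proposition from the correspondence, via the opcartesian cell $\eta_j$, between the factorisations $\psi = \eps \of (\phi' * j)$ in $V(\K)$ and the factorisations $\phi = \phi' \hc \eps'$ in $\K$ (the paper leaves the interchange computation implicit in the discussion preceding \defref{definition: right Kan extension}). One small imprecision: the identity $1_j \hc \eta_j = \eta_j$ (modulo unitors) is not a consequence of coherence alone but of the conjoint identities, e.g.\ $1_j \hc \eta_j = (\eps_j \of \eta_j) \hc (\eta_j \of \id_{1_A}) = (\eps_j \hc \eta_j) \of (\eta_j \hc \id_{1_A}) = \id_{j^*} \of \eta_j = \eta_j$.
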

	
	\subsection{Pointwise Kan extensions along enriched functors.} \label{subsection: enriched Kan extensions.}
	Having recalled the notion of ordinary right Kan extension, our aim is now to give a notion of pointwise Kan extension in double categories that extends the notion of pointwise Kan extension for enriched functors. In this section we recall the latter notion, which is extended to double categories in the next section.
	
	To begin we recall a notion of pointwise Kan extension for unenriched functors. Given functors $\map jAB$, $\map dAM$ and $\map rBM$ between small categories, a natural transformation $\nat\eps{r \of j}d$ defines $r$ as the pointwise right Kan extension of $d$ along $j$ if the maps
	\begin{equation} \label{unenriched pointwise Kan extension}
		M(m, rb) \to \inhom{A, \Set}\bigpars{B(b, j\dash), M(m, d\dash)},
	\end{equation}
	which assign to $\map um{rb}$ the natural transformation with components
	\begin{equation} \label{unenriched pointwise Kan extension underlying map}
		B(b, ja) \xrar r M(rb, rja) \xrar{M(u, \eps_a)} M(m, da),
	\end{equation}
	are bijections. That this coincides with the usual notion follows easily from the fact that the target of \eqref{unenriched pointwise Kan extension} is the set of cones from $m$ to the functor $b \slash j \to A \xrar d M$, where $b \slash j$ denotes the comma category. For details see Section X.5 of \cite{MacLane98}.
	
	Given a closed symmetric monoidal category $\V = (\V, \tens, I, \mathfrak s, \inhom{\dash, \dash})$ that is complete and cocomplete, we now assume that the functors $j$, $d$ and $r$ are $\V$-functors, and that $\nat\eps{r \of j}d$ is a $\V$-natural transformation. Then we can consider enriched variants of \eqref{unenriched pointwise Kan extension}, which are $\V$-maps
	\begin{equation} \label{enriched pointwise Kan extension}
		M(m, rb) \to \inhom{A, \V}\bigpars{B(b, j\dash), M(m, d\dash)},
	\end{equation}
	as we shall explain, and $\eps$ defines the $\V$-functor $r$ as the pointwise right Kan extension of $d$ along $j$ if these are isomorphisms; see the last condition of Theorem 4.6 of \cite{Kelly82}, which lists equivalent conditions on $\eps$ for it to define $r$ as the pointwise right Kan extension. Pointwise Kan extensions along enriched functors were first introduced by Dubuc in Section I.4 of \cite{Dubuc70}.
	
	To describe \eqref{enriched pointwise Kan extension} we first have to recall the definition of their targets, which form the `$\V$-objects of $\V$-natural transformations' $B(b, j\dash) \Rar M(m, d\dash)$. We shall not do this directly, but instead recall the definition of the `right hom' $\hmap{K \rhom H}AB$ for any pair of $\V$-profunctors $\hmap HBE$ and $\hmap KAE$; the targets of \eqref{enriched pointwise Kan extension} are then given by the right hom $\hmap{d^* \rhom j^*}MB$ of the conjoints $\hmap{j^*}BA$ and $\hmap{d^*}MA$.
	
	On objects we define $K \rhom H$ by the ends $(K \rhom H)(a, b) = \int_E \inhom{H(b, \dash), K(a, \dash)}$, where $\inhom{\dash, \dash}$ is the inner hom of $\V$. Dually to \eqref{composition of enriched profunctors}, these ends can be taken to be the equalisers of the $\V$-maps
	\begin{displaymath}
    \prod\limits_{e \in E} \inhom{H(b, e), K(a,e)} \rightrightarrows \prod\limits_{e_1, e_2 \in E} \inhom{H(b, e_1) \tens E(e_1, e_2), K(a,e_2)}
  \end{displaymath}
  that are induced by letting $E(e_1, e_2)$ act on $H(b, e_1)$ and $K(a, e_1)$ respectively. The universal properties of these ends ensure that they combine to form a $\V$-profunctor $\hmap{K \rhom H}AB$.
  
  Using the fact that ends are dual to coends, as well as the tensor-hom adjunction $\dash \tens X \ladj \inhom{X, \dash}$ of $\V$, it is straightforward to obtain a correspondence of $\V$-natural transformations
  \begin{displaymath}
  	\cell\phi{J\hc H}K \qquad \leftrightarrow \qquad \cell\psi J{K \rhom H},
  \end{displaymath}
  for any $\hmap JAB$. To be precise we obtain, for all $\hmap HBE$, an adjunction
  \begin{equation}\label{left hom adjunction}
  	\begin{tikzpicture}[textbaseline]
			\matrix(m)[math, column sep=1.75em]{\dash \hc H \colon H(\enProf\V)(A, B) & H(\enProf\V)(A, E) \colon \dash \rhom H, \\ };
			\path[transform canvas={yshift=3.5pt}, map] (m-1-1) edge (m-1-2);
			\path[transform canvas={yshift=-3.5pt}, map]	(m-1-2) edge (m-1-1);
			\path[color=white] (m-1-1) edge node[color=black, font=\scriptsize] {$\bot$} (m-1-2);
		\end{tikzpicture}
  \end{equation}
  where $H(\enProf\V)$ denotes the horizontal bicategory contained in the double category $\enProf \V$, that is the bicategory of $\V$-categories, $\V$-profunctors and their transformations. 
  
  We now return to describing the $\V$-maps \eqref{enriched pointwise Kan extension}. First recall that
  \begin{displaymath}
  	\inhom{A, \V}\bigpars{B(b, j\dash), M(m, d\dash)} = \int_A \inhom{B(b, j\dash), M(m, d\dash)} = (d^* \rhom j^*)(m, b);
  \end{displaymath}
  see Section 2.2 of \cite{Kelly82}. Next consider the $\V$-natural transformation $\nat\eps{r\of j}d$ as a vertical cell in $\enProf\V$, as on the left below.
  \begin{equation} \label{corresponding cells in a right-closed equipment}
  	\begin{tikzpicture}[textbaseline]
    	\matrix(m)[math175em]{A & A \\ B & \phantom B \\ M & M \\};
    	\path[map]  (m-1-1) edge node[left] {$j$} (m-2-1)
       		        (m-1-2) edge node[right] {$d$} (m-3-2)
       		        (m-2-1) edge node[left] {$r$} (m-3-1);
      \path				(m-1-1) edge[eq] (m-1-2)
      						(m-3-1) edge[eq] (m-3-2);
    	\path[transform canvas={shift={($(m-2-1)!0.5!(m-1-1)$)}}] (m-2-2) edge[cell] node[right] {$\eps$} (m-3-2);
  	\end{tikzpicture} \qquad\qquad \begin{tikzpicture}[textbaseline]
			\matrix(m)[math175em]{M & B & A \\ M & & A \\};
			\path[map]	(m-1-1) edge[barred] node[above] {$r^*$} (m-1-2)
									(m-1-2) edge[barred] node[above] {$j^*$} (m-1-3)
									(m-2-1) edge[barred] node[below] {$d^*$} (m-2-3)
									(m-1-2) edge[cell] node[right] {$\eps^*$} (m-2-2);
			\path				(m-1-1) edge[eq] (m-2-1)
									(m-1-3) edge[eq] (m-2-3);
		\end{tikzpicture} \qquad\qquad \begin{tikzpicture}[textbaseline]
  		\matrix(m)[math175em]{M & B \\ M & B \\};
  		\path[map]  (m-1-1) edge[barred] node[above] {$r^*$} (m-1-2)
									(m-2-1) edge[barred] node[below] {$d^* \rhom j^*$} (m-2-2);
			\path				(m-1-1) edge[eq] (m-2-1)
									(m-1-2) edge[eq] (m-2-2);
			\path[transform canvas={shift={($(m-1-2)!(0,0)!(m-2-2)$)}, xshift=-0.9em}] (m-1-1) edge[cell] node[right] {$(\eps^*)^\flat$} (m-2-1);
		\end{tikzpicture}
  \end{equation}
  It factors uniquely, through the opcartesian cell defining $r^* \hc j^*$ as the conjoint of $r \of j$ and the cartesian cell defining $d^*$ as the conjoint of $d$, as a horizontal cell $\eps^*$ as in the middle; compare the definition of $\phi^*$ in \eqref{companion and conjoint factorisations}. Remembering from the proof of \lemref{cartesian and opcartesian cells in terms of companions and conjoints} that this factorisation is obtained by composing $\eps$ on the left with the cartesian cells $\eps_j$ and $\eps_r$ defining $j^*$ and $r^*$, and on the right with the opcartesian cell $\eta_d$ defining $d^*$, we find that $\eps^*$ is induced by the $\V$-maps
  \begin{multline*}
  	\coprod_{b \in B} M(m, rb) \tens B(b, ja) \xrar{\coprod\id \tens r} \coprod_{b \in B} M(m, rb) \tens M(rb, rja) \\ \xrar\of M(m, rja) \xrar{M(\id, \eps_a)} M(m, da);
  \end{multline*}
  compare this with the components \eqref{unenriched pointwise Kan extension underlying map} that are used in the unenriched case. The cell $\eps^*$ in turn uniquely corresponds to a horizontal cell $(\eps^*)^\flat$ as on the right of \eqref{corresponding cells in a right-closed equipment}, under the adjunction \eqref{left hom adjunction}, and the components of $(\eps^*)^\flat$ form the $\V$-maps \eqref{enriched pointwise Kan extension}; see the last condition of Theorem~4.6 of \cite{Kelly82}.
  
  The following definition summarises this section.
  \begin{definition} \label{definition: pointwise Kan extension along enriched functors}
  	Let $\V$ be a closed symmetric monoidal category that is complete and cocomplete. A $\V$-natural transformation $\nat\eps{r \of j}d$ of $\V$-functors, as on the left of \eqref{corresponding cells in a right-closed equipment}, defines the $\V$-functor $r$ as the \emph{pointwise right Kan extension of $d$ along $j$} if the corresponding $\V$-natural transformation of $\V$-profunctors $\cell{(\eps^*)^\flat}{r^*}{d^* \rhom j^*}$, as on the right of \eqref{corresponding cells in a right-closed equipment}, is invertible.
  \end{definition}
  
  \subsection{Pointwise Kan extensions in double categories.}
  In this section we, by mimicking the ideas of the previous section, obtain a notion of pointwise right Kan extension in `right closed' equipments, which we extend to general double categories afterwards. The first notion is closely related to Wood's definition of `indexed limit' in bicategories that are, in his sense, `equipped with abstract proarrows', as introduced in \cite{Wood82}.
  
  More precisely, by restricting Wood's notion of bicategories equipped with abstract proarrows to $2$-categories, one obtains a notion that is equivalent to that of equipments in our sense which, additionally, are both right closed (see below) and `left closed'; for details see Appendix~C of \cite{Shulman08}. In such `$2$-categories equipped with abstract proarrows' Wood's notion of indexed limit coincides with \defref{definition: pointwise right Kan extension in right closed equipments} given below.
  \begin{definition}
  	A double category $\K$ is called \emph{right closed} if, for each horizontal morphism $\hmap HBE$, the functor
  	\begin{displaymath}
  		\map{\dash \hc H}{H(\K)(A, B)}{H(\K)(A, E)}
  	\end{displaymath}
  	has a right adjoint $\dash \rhom H$.
  \end{definition}
  
  As in \eqref{corresponding cells in a right-closed equipment}, in a right closed equipment $\K$ every cell $\eps$ on the left below corresponds to the horizontal cell $\eps^*$ in the middle, which in turn corresponds to a horizontal cell $(\eps^*)^\flat$ on the right, under the adjunction $\dash \hc J \ladj \dash \rhom J$.
  \begin{displaymath}
  	\begin{tikzpicture}[baseline]
  			\matrix(m)[math175em]{A & B \\ M & M \\};
  			\path[map]  (m-1-1) edge[barred] node[above] {$J$} (m-1-2)
														edge node[left] {$r$} (m-2-1)
										(m-1-2) edge node[right] {$d$} (m-2-2);
				\path				(m-2-1) edge[eq] (m-2-2);
				\path[transform canvas={shift={($(m-1-2)!(0,0)!(m-2-2)$)}}] (m-1-1) edge[cell] node[right] {$\eps$} (m-2-1);
			\end{tikzpicture} \qquad\qquad \begin{tikzpicture}[baseline]
			\matrix(m)[math175em]{M & A & B \\ M & & B \\};
			\path[map]	(m-1-1) edge[barred] node[above] {$r^*$} (m-1-2)
									(m-1-2) edge[barred] node[above] {$J$} (m-1-3)
									(m-2-1) edge[barred] node[below] {$d^*$} (m-2-3)
									(m-1-2) edge[cell] node[right] {$\eps^*$} (m-2-2);
			\path				(m-1-1) edge[eq] (m-2-1)
									(m-1-3) edge[eq] (m-2-3);
		\end{tikzpicture} \qquad\qquad \begin{tikzpicture}[baseline]
  		\matrix(m)[math175em]{M & A \\ M & A \\};
  		\path[map]  (m-1-1) edge[barred] node[above] {$r^*$} (m-1-2)
									(m-2-1) edge[barred] node[below] {$d^* \rhom J$} (m-2-2);
			\path				(m-1-1) edge[eq] (m-2-1)
									(m-1-2) edge[eq] (m-2-2);
			\path[transform canvas={shift={($(m-1-2)!(0,0)!(m-2-2)$)}, xshift=-0.9em}] (m-1-1) edge[cell] node[right] {$(\eps^*)^\flat$} (m-2-1);
		\end{tikzpicture}
  \end{displaymath}
  
	Mimicking \defref{definition: pointwise Kan extension along enriched functors} we make the following definition. It is closely related to Wood's definition of indexed limit, as previously discussed.
  \begin{definition} \label{definition: pointwise right Kan extension in right closed equipments}
		Let $\map dBM$ and $\hmap JAB$ be morphisms in a right closed equipment. The cell $\eps$ on the left above is said to define $r$ as the \emph{pointwise right Kan extension of $d$ along $J$} if the corresponding horizontal cell $(\eps^*)^\flat$ on the right is invertible.
  \end{definition}
  
  \begin{example}
  	As we have seen, in the case that $\K = \enProf\V$ and $J = j^*$, for a $\V$-functor \mbox{$\map jBA$}, the $\V$-natural transformation $\eps$ above defines $r$ as the pointwise right Kan extension of $J$ along $d$ precisely if the corresponding $\V$-natural transformation \mbox{$\nat{\eps \of \eta_j}r{d \of j}$} defines $r$ as the pointwise right Kan extension of $d$ along $j$ in $\enCat\V = V(\enProf\V)$.
  	
  	Similarly $\V$-weighted limits can be defined in $\enProf\V$, as follows. Given a $\V$-weight $\map jB\V$, notice that it can be regarded as a $\V$-profunctor $\hmap J1B$, where $1$ denotes the `unit' $\V$-category, consisting of one object $*$ and $1(*, *) = I$, the unit of $\V$. As with enriched Kan extensions, one can show that the $j$-weighted limit of a $\V$-functor $\map dBM$ exists if and only if the pointwise right Kan extension of $d$ along $J$ exists in $\enProf\V$.
  \end{example}
  
  In order to extend \defref{definition: pointwise right Kan extension in right closed equipments} to general double categories we restate its condition `$(\eps^*)^\flat$ is invertible' in terms that do not use conjoints or the right adjoint $\dash \rhom J$, as follows. It is easy to see that $(\eps^*)^\flat$ is invertible if and only if $\eps^*$ forms a `universal arrow $\dash \hc J \Rar d^*$', in the sense of Section III.1 of \cite{MacLane98}. This means that every cell $\psi$ as on the left below factors uniquely through $\eps^*$ as shown.
  \begin{displaymath}
  	\begin{tikzpicture}[textbaseline]
			\matrix(m)[math175em]{M & A & B \\ M & & B \\};
			\path[map]	(m-1-1) edge[barred] node[above] {$K$} (m-1-2)
									(m-1-2) edge[barred] node[above] {$J$} (m-1-3)
									(m-2-1) edge[barred] node[below] {$d^*$} (m-2-3)
									(m-1-2) edge[cell] node[right] {$\psi$} (m-2-2);
			\path				(m-1-1) edge[eq] (m-2-1)
									(m-1-3) edge[eq] (m-2-3);
		\end{tikzpicture} = \begin{tikzpicture}[textbaseline]
  		\matrix(m)[math175em]{M & A & B \\ M & A & B \\ M & & B \\};
  		\path[map]	(m-1-1) edge[barred] node[above] {$K$} (m-1-2)
  								(m-1-2) edge[barred] node[above] {$J$} (m-1-3)
  								(m-2-1) edge[barred] node[below] {$r^*$} (m-2-2)
  								(m-2-2) edge[barred] node[below] {$J$} (m-2-3)
  								(m-3-1) edge[barred] node[below] {$d^*$} (m-3-3)
  								(m-2-2) edge[cell] node[right] {$\eps^*$} (m-3-2);
  		\path				(m-1-1) edge[eq] (m-2-1)
  								(m-1-2) edge[eq] (m-2-2)
  								(m-1-3) edge[eq] (m-2-3)
  								(m-2-1) edge[eq] (m-3-1)
  								(m-2-3) edge[eq] (m-3-3);
  		\path[transform canvas={shift={($(m-2-2)!0.5!(m-2-3)$)}}] (m-1-1) edge[cell] node[right] {$\psi'$} (m-2-1);
  	\end{tikzpicture}
  \end{displaymath}
  But the unique factorisations above correspond precisely to the unique factorisations on the left below, under the correspondence $\psi \leftrightarrow \psi^*$ given in \eqref{companion and conjoint factorisations}. Here we use the functoriality of the latter, see \lemref{companion and conjoint factorisations functoriality}.
  \begin{displaymath}
  	\begin{tikzpicture}[textbaseline]
			\matrix(m)[math175em]{M & A & B \\ M & & M \\};
			\path[map]	(m-1-1) edge[barred] node[above] {$K$} (m-1-2)
									(m-1-2) edge[barred] node[above] {$J$} (m-1-3)
									(m-1-3) edge node[right] {$d$} (m-2-3)
									(m-1-2) edge[cell] node[right] {$\xi$} (m-2-2);
			\path				(m-1-1) edge[eq] (m-2-1)
									(m-2-1) edge[eq] (m-2-3);
		\end{tikzpicture} = \begin{tikzpicture}[textbaseline]
			\matrix(m)[math175em]{M & A & B \\ M & M & M \\};
			\path[map]	(m-1-1) edge[barred] node[above] {$K$} (m-1-2)
									(m-1-2) edge[barred] node[above] {$J$} (m-1-3)
													edge node[right] {$r$} (m-2-2)
									(m-1-3) edge node[right] {$d$} (m-2-3);
			\path				(m-1-1) edge[eq] (m-2-1)
									(m-2-1) edge[eq] (m-2-2)
									(m-2-2) edge[eq] (m-2-3);
			\path[transform canvas={shift={($(m-1-2)!0.5!(m-2-3)$)}}] (m-1-1) edge[cell] node[right] {$\xi'$} (m-2-1)
									(m-1-2) edge[cell] node[right] {$\eps$} (m-2-2);
		\end{tikzpicture} \qquad\qquad\quad \begin{tikzpicture}[textbaseline]
			\matrix(m)[math175em]{C & A & B \\ M & A & B \\};
			\path[map]	(m-1-1) edge[barred] node[above] {$H$} (m-1-2)
													edge node[left] {$s$} (m-2-1)
									(m-1-2) edge[barred] node[above] {$J$} (m-1-3)
									(m-2-1) edge[barred] node[below] {$K$} (m-2-2)
									(m-2-2) edge[barred] node[below] {$J$} (m-2-3);
			\path				(m-1-2) edge[eq] (m-2-2)
									(m-1-3) edge[eq] (m-2-3);
			\draw				($(m-1-1)!0.5!(m-2-2)$) node {\oc};		
		\end{tikzpicture}
  \end{displaymath}
  
  Furthermore, the unique factorisation of the cells $\xi$ through $\eps$, on the left above, is equivalent to the unique factorisation of the more general cells $\phi$ through $\eps$, as shown below. Indeed, the one below reduces to the one above by taking $H = K$ and $s = \id_M$. On the other hand, unique factorisations of the form below correspond precisely to ones of the form on the left above, under the unique factorisation through the composite on the right, where $K$ is any choice of extension of $H$ along $s$; this composite is opcartesian by \lemref{composing cartesian and opcartesian cells}.
  \begin{equation} \label{pointwise right Kan extension factorisation}
  	\begin{tikzpicture}[textbaseline]
			\matrix(m)[math175em]{C & A & B \\ M & & M \\};
			\path[map]	(m-1-1) edge[barred] node[above] {$H$} (m-1-2)
													edge node[left] {$s$} (m-2-1)
									(m-1-2) edge[barred] node[above] {$J$} (m-1-3)
									(m-1-3) edge node[right] {$d$} (m-2-3)
									(m-1-2) edge[cell] node[right] {$\phi$} (m-2-2);
			\path				(m-2-1) edge[eq] (m-2-3);
		\end{tikzpicture} = \begin{tikzpicture}[textbaseline]
			\matrix(m)[math175em]{C & A & B \\ M & M & M \\};
			\path[map]	(m-1-1) edge[barred] node[above] {$H$} (m-1-2)
													edge node[left] {$s$} (m-2-1)
									(m-1-2) edge[barred] node[above] {$J$} (m-1-3)
													edge node[right] {$r$} (m-2-2)
									(m-1-3) edge node[right] {$d$} (m-2-3);
			\path				(m-2-1) edge[eq] (m-2-2)
									(m-2-2) edge[eq] (m-2-3);
			\path[transform canvas={shift={($(m-1-2)!0.5!(m-2-3)$)}}] (m-1-1) edge[cell] node[right] {$\phi'$} (m-2-1)
									(m-1-2) edge[cell] node[right] {$\eps$} (m-2-2);
		\end{tikzpicture}
  \end{equation}
  We conclude that, in the case that $\K$ is a right closed equipment, the definition below coincides with \defref{definition: pointwise right Kan extension in right closed equipments}.
  \begin{definition} \label{definition: pointwise right Kan extension}
  	Let $\map dBM$ and $\hmap JAB$ be morphisms in a double category. The cell $\eps$ in the right-hand side above is said to define $r$ as the \emph{pointwise right Kan extension of $d$ along $J$} if every cell $\phi$ above factors uniquely through $\eps$ as shown.
  \end{definition}
  
  Like ordinary Kan extensions, two cells defining the same pointwise Kan extension factor uniquely through each other as invertible vertical cells. We remark that Grandis and Par\'e introduced in \cite{Grandis-Pare08} a notion similar to the above, that of `absolute' right Kan extension, which is stronger in that it allows the cells $\phi$ and $\phi'$ in \eqref{pointwise right Kan extension factorisation}  to have any, not necessarily unital, horizontal target.
  
  Pointwise left Kan extensions in $\K$ are simply pointwise right Kan extensions in $\co\K$ as follows, where $\co\K$ is the horizontal dual of $\K$; see \defref{definition: duals of double categories}.
  \begin{definition}
  	Let $\map dAM$ and $\hmap JAB$ be morphisms in a double category. The cell $\eta$ in the right-hand side below defines $l$ as the \emph{pointwise left Kan extension of $d$ along $J$} if every cell $\phi$ below factors uniquely through $\eta$ as shown.
  	\begin{displaymath}
  		\begin{tikzpicture}[textbaseline]
				\matrix(m)[math175em]{A & B & C \\ M & & M \\};
				\path[map]	(m-1-1) edge[barred] node[above] {$J$} (m-1-2)
														edge node[left] {$d$} (m-2-1)
										(m-1-2) edge[barred] node[above] {$H$} (m-1-3)
										(m-1-3) edge node[right] {$k$} (m-2-3)
										(m-1-2) edge[cell] node[right] {$\phi$} (m-2-2);
				\path				(m-2-1) edge[eq] (m-2-3);
			\end{tikzpicture} = \begin{tikzpicture}[textbaseline]
				\matrix(m)[math175em]{A & B & C \\ M & M & M \\};
				\path[map]	(m-1-1) edge[barred] node[above] {$J$} (m-1-2)
														edge node[left] {$d$} (m-2-1)
										(m-1-2) edge[barred] node[above] {$H$} (m-1-3)
														edge node[right] {$l$} (m-2-2)
										(m-1-3) edge node[right] {$k$} (m-2-3);
				\path				(m-2-1) edge[eq] (m-2-2)
										(m-2-2) edge[eq] (m-2-3);
				\path[transform canvas={shift={($(m-1-2)!0.5!(m-2-3)$)}}] (m-1-1) edge[cell] node[right] {$\eta$} (m-2-1)
										(m-1-2) edge[cell] node[right] {$\phi'$} (m-2-2);
			\end{tikzpicture}
  	\end{displaymath}
  \end{definition}
  
  The following two lemmas record some simple properties of pointwise Kan extensions. The second one, on iterated pointwise Kan extensions, generalises the analogous result for pointwise Kan extensions along enriched functors; see Theorem 4.47 of \cite{Kelly82}. Its proof is straightforward and omitted.
	\begin{lemma} \label{pointwise right Kan extensions are right Kan extensions}
	The cell $\eps$ below defines $r$ as a right Kan extension whenever it defines $r$ as a pointwise right Kan extension. The converse holds once the pointwise right Kan extension of $d$ along $J$ is known to exist.
		\begin{displaymath}
			\begin{tikzpicture}
  			\matrix(m)[math175em]{A & B \\ M & M \\};
  			\path[map]  (m-1-1) edge[barred] node[above] {$J$} (m-1-2)
														edge node[left] {$r$} (m-2-1)
										(m-1-2) edge node[right] {$d$} (m-2-2);
				\path				(m-2-1) edge[eq] (m-2-2);
				\path[transform canvas={shift={($(m-1-2)!(0,0)!(m-2-2)$)}}] (m-1-1) edge[cell] node[right] {$\eps$} (m-2-1);
		\end{tikzpicture}
		\end{displaymath}
	\end{lemma}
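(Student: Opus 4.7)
The statement decomposes into two implications; the first is a direct specialisation, and the second uses the uniqueness of ordinary right Kan extensions to transfer the universal property across an isomorphism.

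For the forward direction, I would observe that the ordinary right Kan extension property of \defref{definition: right Kan extension} is precisely the pointwise property of \defref{definition: pointwise right Kan extension} restricted to the case $H = 1_A$. Any cell $\phi$ of horizontal source $J$ and vertical legs $s, d$ corresponds, via the unitor $\mathfrak l\colon 1_A \hc J \iso J$, to a cell of source $1_A \hc J$ with the same vertical legs; its pointwise factorisation produces a $\phi'$ of horizontal source $1_A$, which is exactly a vertical cell $s \Rar r$ in $V(\K)$. Both existence and uniqueness transfer.

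For the converse, let $\tilde\eps$ denote a cell defining the pointwise right Kan extension, with vertical source $\tilde r$. By the forward direction, $\tilde\eps$ is also an ordinary right Kan extension of $d$ along $J$, so the standard uniqueness argument applied to the pair $(\eps, \tilde\eps)$ yields an invertible vertical cell $\iota\colon \tilde r \Rar r$ with $\tilde\eps = \iota \hc \eps$; $\iota$ is obtained by factoring $\tilde\eps$ through $\eps$, and invertibility follows from a double factorisation, comparing $\tilde\eps$ with itself and $\eps$ with itself. To transfer the pointwise property to $\eps$, take any cell $\phi$ of the pointwise shape: the pointwise property of $\tilde\eps$ produces $\phi = \tilde\phi' \hc \tilde\eps$, and setting $\phi' := \tilde\phi' \hc \iota$ gives $\phi = \phi' \hc \eps$ via the identity $\tilde\eps = \iota \hc \eps$. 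For uniqueness, a competing factorisation $\phi = \phi'_0 \hc \eps$ gives rise to $\tilde\phi'_0 := \phi'_0 \hc \iota^{-1}$ with $\tilde\phi'_0 \hc \tilde\eps = \phi$, so pointwise uniqueness for $\tilde\eps$ forces $\tilde\phi'_0 = \tilde\phi'$ and hence $\phi'_0 = \phi'$.

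I do not anticipate any real obstacle; the argument is essentially bookkeeping. The only delicate points are the suppressed unitors $\mathfrak l$ and $\mathfrak r$, and the correct direction of $\iota$, both of which are controlled by the universal properties already invoked.
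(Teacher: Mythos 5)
Your proposal is correct and follows essentially the same route as the paper: the forward direction is the specialisation $H = 1_A$ (modulo unitors), and the converse transfers the pointwise property across the invertible vertical cell comparing $\eps$ with a cell known to define the pointwise extension. The paper states this comparison argument in one sentence; your version merely spells out the bookkeeping.
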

	\begin{proof}
		For the converse remember that any two cells defining the same right Kan extension factor through each other as invertible vertical cells, so that if one of them defines a pointwise right Kan extension then so does the other.
	\end{proof}
	
	\begin{lemma}[Pasting lemma]
		Consider horizontally composable cells
		\begin{displaymath}
			\begin{tikzpicture}
				\matrix(m)[math175em]{A & B & C \\ M & M & M \\};
				\path[map]	(m-1-1) edge[barred] node[above] {$J$} (m-1-2)
														edge node[left] {$s$} (m-2-1)
										(m-1-2) edge[barred] node[above] {$H$} (m-1-3)
														edge node[right] {$r$} (m-2-2)
										(m-1-3) edge node[right] {$d$} (m-2-3);
				\path	(m-2-1) edge[eq] (m-2-2)
							(m-2-2) edge[eq] (m-2-3);
				\path[transform canvas={shift={($(m-1-2)!0.5!(m-2-3)$)}}] (m-1-1) edge[cell] node[right] {$\gamma$} (m-2-1)
				(m-1-2) edge[cell] node[right] {$\eps$} (m-2-2);
			\end{tikzpicture}
		\end{displaymath}
		in a double category, and suppose that $\eps$ defines $r$ as the pointwise right Kan extension of $d$ along $H$. Then $\gamma$ defines $s$ as the (pointwise) right Kan extension of $r$ along $J$ precisely if $\gamma \hc \eps$ defines $s$ as the (pointwise) right Kan extension of $d$ along $J \hc H$.
	\end{lemma}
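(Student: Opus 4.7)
The plan is to apply the universal properties of $\eps$ and $\gamma$ in succession, using the pointwiseness of $\eps$ to turn any factorisation problem for the composite $\gamma \hc \eps$ into one for $\gamma$ alone.

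First I would set up the key bijection used in the pointwise case. Let $\hmap{H'}{X}A$ be arbitrary and let $\phi$ be any cell with horizontal source $H' \hc (J \hc H) \iso (H' \hc J) \hc H$, vertical target the unit $1_M$, and right vertical boundary $d$ (with some left boundary $X \to M$). Since $\eps$ defines $r$ as the pointwise right Kan extension of $d$ along $H$, such a $\phi$ factorises uniquely as $\phi = \phi_1 \hc \eps$, where $\phi_1$ has source $H' \hc J$ and right boundary $r$. This assignment $\phi \leftrightarrow \phi_1$ is a bijection between cells of the two shapes into $1_M$, and under it a factorisation $\phi = \phi_2 \hc (\gamma \hc \eps) = (\phi_2 \hc \gamma) \hc \eps$ through $\gamma \hc \eps$ corresponds exactly to a factorisation $\phi_1 = \phi_2 \hc \gamma$ through $\gamma$.

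From this bijection both directions of the pointwise equivalence follow. If $\gamma$ is the pointwise right Kan extension of $r$ along $J$, then for each $\phi$ the associated $\phi_1$ factorises uniquely as $\phi_2 \hc \gamma$, so $\phi$ factorises uniquely as $\phi_2 \hc (\gamma \hc \eps)$, as required. Conversely, suppose $\gamma \hc \eps$ is the pointwise right Kan extension of $d$ along $J \hc H$, and let $\psi$ be a cell with source $H' \hc J$ and right boundary $r$. Applying the hypothesis to $\psi \hc \eps$ yields a unique $\psi'$ with $\psi \hc \eps = (\psi' \hc \gamma) \hc \eps$. The uniqueness clause in the pointwise property of $\eps$, now applied to the cell $\psi \hc \eps$, forces $\psi = \psi' \hc \gamma$; and uniqueness of $\psi'$ in this factorisation transfers directly from the uniqueness in $\gamma \hc \eps$.

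The ordinary (non-pointwise) assertion is obtained by specialising the above to $H' = 1_A$: cells over $J \hc H$ with right boundary $d$ biject under $\phi \leftrightarrow \phi_1$ with cells over $J$ with right boundary $r$, and the vertical cells $\phi_2$ produced by the two sides of the equivalence match up exactly, so that $\gamma$ is an ordinary right Kan extension if and only if $\gamma \hc \eps$ is. The only point requiring a bit of care throughout is the silent treatment of associators and unitors, which is justified by the coherence conventions recalled earlier; beyond that, no genuine obstacle is anticipated.
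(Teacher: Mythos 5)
The paper omits this proof as ``straightforward,'' so there is no written argument to compare against; your proof is correct and is exactly the intended one. The key move --- applying the pointwise universal property of $\eps$ at the composite $H' \hc J$ to obtain a bijection $\phi \leftrightarrow \phi_1$ under which factorisations through $\gamma \hc \eps$ correspond to factorisations through $\gamma$, with surjectivity of $\phi_1 \mapsto \phi_1 \hc \eps$ secured by uniqueness --- is handled properly in both directions, and the reduction of the ordinary case to the instance $H' = 1_A$ is also right.
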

	
	We close this section with a lemma showing that `precomposition is a form of pointwise right Kan extension', which will be very useful in the next section.
	\begin{lemma} \label{precomposition}
  	Consider a composable pair $\map fAC$ and $\map rCM$ in a double category, and assume that the companion $\hmap {f_*}AC$ of $f$ exists. The composite below defines $r \of f$ as the pointwise right Kan extension of $r$ along $f_*$.
  	\begin{displaymath}
  		\begin{tikzpicture}
  			\matrix(m)[math175em]{A & C \\ C & C \\ M & M \\};
  			\path[map]	(m-1-1) edge[barred] node[above] {$f_*$} (m-1-2)
  													edge node[left] {$f$} (m-2-1)
  									(m-2-1) edge node[left] {$r$} (m-3-1)
  									(m-2-2) edge node[right] {$r$} (m-3-2);
  			\path				(m-1-2) edge[eq] (m-2-2)
  									(m-2-1) edge[eq] (m-2-2)
  									(m-3-1) edge[eq] (m-3-2);
  			\draw				($(m-1-1)!0.5!(m-2-2)$) node {\textup{cart}};
  		\end{tikzpicture}
  	\end{displaymath}
  \end{lemma}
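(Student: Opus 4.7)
The plan is to verify the universal property of \defref{definition: pointwise right Kan extension}: for any cell $\phi$ with horizontal source $H \hc f_*$, horizontal target $1_M$, and vertical source and target $s$ and $r$ respectively, I must produce a unique cell $\phi'$ with horizontal source $H$, horizontal target $1_M$, and verticals $s$ and $r \of f$, such that $\phi' \hc \eps = \phi$. Write $\eps = 1_r \of \ls f\eps$, where $\ls f\eps$ is the cartesian companion cell and $1_r$ is the horizontal unit on $r$.

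For existence, set $\phi' \mathrel{:=} \phi \of \bigpars{\id_H \hc \ls f\eta}$, suppressing the unitor $\mathfrak r\colon H \hc 1_A \iso H$ as per the conventions of the paper. Applying the interchange law to $\phi' \hc \eps$ gives
\[
  \phi' \hc \eps \; = \; (\phi \hc 1_r) \of \Bigpars{\id_H \hc (\ls f\eta \hc \ls f\eps)} \; = \; (\phi \hc 1_r) \of \id_{H \hc f_*} \; = \; \phi \hc 1_r,
\]
where the middle equality uses the companion identity $\ls f\eta \hc \ls f\eps = \id_{f_*}$. Since $1_r$ is a horizontal unit cell, $\phi \hc 1_r$ agrees with $\phi$ modulo the unitors on $H \hc f_*$ and on $1_M$.

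For uniqueness, suppose $\psi$ with horizontal source $H$, horizontal target $1_M$, and verticals $s$ and $r \of f$ satisfies $\psi \hc \eps = \phi$. Substituting this into the defining formula for $\phi'$ and applying interchange yields
\[
  \phi' \; = \; (\psi \hc \eps) \of \bigpars{\id_H \hc \ls f\eta} \; = \; (\psi \of \id_H) \hc (\eps \of \ls f\eta) \; = \; \psi \hc (\eps \of \ls f\eta).
\]
The other companion identity $\ls f\eps \of \ls f\eta = 1_f$, together with the functoriality of the horizontal unit assignment $f \mapsto 1_f$ (see \defref{double category}), computes
\[
  \eps \of \ls f\eta \; = \; 1_r \of (\ls f\eps \of \ls f\eta) \; = \; 1_r \of 1_f \; = \; 1_{r \of f},
\]
so $\phi' = \psi \hc 1_{r \of f} = \psi$, once more up to the unitor on $1_M$.

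The only substantive obstacle is the bookkeeping of associators and unitors of $\K$; by Theorem~7.5 of \cite{Grandis-Pare99} one may pass to an equivalent strict double category and suppress them throughout the argument, so that the identities above hold on the nose.
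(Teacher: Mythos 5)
Your proof is correct and follows essentially the same route as the paper: existence via $\phi' = \phi \of (\id_H \hc \ls f\eta)$ and the companion identity $\ls f\eta \hc \ls f\eps = \id_{f_*}$, uniqueness via the other identity $\ls f\eps \of \ls f\eta = 1_f$; the paper merely presents the two interchange computations as grid diagrams rather than equations. The handling of unitors by passing to a strict double category is also consistent with the paper's stated conventions.
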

  \begin{proof}
  	We have to show that every cell $\phi$ as on the left-hand side below factors uniquely through the composite above. That such a factorisation exists is shown by the following identity, which is obtained by using one of the companion identities for $f_*$.
  	\begin{displaymath}
  		\begin{tikzpicture}[textbaseline]
  			\matrix(m)[math175em]{D & A & C \\ M & \phantom M & M \\};
  			\path[map]	(m-1-1) edge[barred] node[above] {$H$} (m-1-2)
  													edge node[left] {$s$} (m-2-1)
  									(m-1-2) edge[barred] node[above] {$f_*$} (m-1-3)
  									(m-1-3) edge node[right] {$r$} (m-2-3);
  			\path				(m-2-1) edge[eq] (m-2-3)
  									(m-1-2) edge[cell] node[right] {$\phi$} (m-2-2);
  		\end{tikzpicture} = \begin{tikzpicture}[textbaseline]
  			\matrix(m)[math175em]{D & A & A & C \\ D & A & C & C \\ M & \phantom M & M & M \\};
  			\path[map]	(m-1-1) edge[barred] node[above] {$H$} (m-1-2)
  									(m-1-3) edge[barred] node[above] {$f_*$} (m-1-4)
  													edge node[right, inner sep=1pt] {$f$} (m-2-3)
  									(m-2-1) edge[barred] node[above] {$H$} (m-2-2)
  													edge node[left] {$s$} (m-3-1)
  									(m-2-2) edge[barred] node[below] {$f_*$} (m-2-3)
  									(m-2-3) edge node[right] {$r$} (m-3-3)
  									(m-2-4) edge node[right] {$r$} (m-3-4);
  			\path				(m-1-1) edge[eq] (m-2-1)
  									(m-1-2) edge[eq] (m-1-3)
  													edge[eq] (m-2-2)
  									(m-1-4) edge[eq] (m-2-4)
  									(m-2-3) edge[eq] (m-2-4)
  									(m-3-1) edge[eq] (m-3-3)
  									(m-3-3) edge[eq] (m-3-4)
  									(m-2-2) edge[cell] node[right] {$\phi$} (m-3-2);
  			\draw				($(m-1-2)!0.5!(m-2-3)$) node {\oc}
  									($(m-1-3)!0.5!(m-2-4)$) node {\cc};
  		\end{tikzpicture}
  	\end{displaymath}
  	To show that this factorisation is unique assume that there exists another cell $\phi'$, as on the left below, such that $\phi = \phi' \hc (1_r \of \textup{cart})$. Then the following equation, which uses the other companion identity for $f_*$, shows that $\phi'$ coincides with the factorisation that was obtained above.
  	\begin{displaymath}
  		\begin{tikzpicture}[textbaseline]
  			\matrix(m)[math175em]{D & A \\ & C \\ M & M \\};
  			\path[map]	(m-1-1) edge[barred] node[above] {$H$} (m-1-2)
  													edge node[left] {$s$} (m-3-1)
  									(m-1-2) edge node[right] {$f$} (m-2-2)
  									(m-2-2) edge node[right] {$r$} (m-3-2);
  			\path				(m-3-1) edge[eq] (m-3-2);
  			\path[transform canvas={shift={($(m-2-2)!0.5!(m-3-2)$)}}]	(m-1-1) edge[cell] node[right] {$\phi'$} (m-2-1);
  		\end{tikzpicture} = \begin{tikzpicture}[textbaseline]
  			\matrix(m)[math175em]{D & A & A \\ D & A & C \\ & C & C \\ M & M & M \\};
  			\path[map]	(m-1-1) edge[barred] node[above] {$H$} (m-1-2)
  									(m-1-3) edge node[right] {$f$} (m-2-3)
  									(m-2-1) edge[barred] node[above] {$H$} (m-2-2)
  													edge node[left] {$s$} (m-4-1)
  									(m-2-2) edge[barred] node[below] {$f_*$} (m-2-3)
  													edge node[left, inner sep=1pt] {$f$} (m-3-2)
  									(m-3-2) edge node[right] {$r$} (m-4-2)
  									(m-3-3) edge node[right] {$r$} (m-4-3);
  			\path				(m-1-1) edge[eq] (m-2-1)
  									(m-1-2) edge[eq] (m-1-3)
  													edge[eq] (m-2-2)
  									(m-2-3) edge[eq] (m-3-3)
  									(m-3-2) edge[eq] (m-3-3)
  									(m-4-1) edge[eq] (m-4-2)
  									(m-4-2) edge[eq] (m-4-3);
  			\path[transform canvas={shift={($(m-3-2)!0.5!(m-3-3)$)}}]	(m-2-1) edge[cell] node[right] {$\phi'$} (m-3-1);
  			\draw				($(m-1-2)!0.5!(m-2-3)$) node {\oc}
  									($(m-2-2)!0.5!(m-3-3)$) node {\cc};
  		\end{tikzpicture} = \begin{tikzpicture}[textbaseline]
  			\matrix(m)[math175em]{D & A & A \\ D & A & C \\ M & \phantom M & M \\};
  			\path[map]	(m-1-1) edge[barred] node[above] {$H$} (m-1-2)
  									(m-1-3) edge node[right] {$f$} (m-2-3)
  									(m-2-1) edge[barred] node[above] {$H$} (m-2-2)
  													edge node[left] {$s$} (m-3-1)
  									(m-2-2) edge[barred] node[below] {$f_*$} (m-2-3)
  									(m-2-3) edge node[right] {$r$} (m-3-3);
  			\path				(m-1-1) edge[eq] (m-2-1)
  									(m-1-2) edge[eq] (m-1-3)
  													edge[eq] (m-2-2)
  									(m-3-1) edge[eq] (m-3-3)
  									(m-2-2) edge[cell] node[right] {$\phi$} (m-3-2);
  			\draw				($(m-1-2)!0.5!(m-2-3)$) node {\oc};
  		\end{tikzpicture}
  	\end{displaymath}
  	This completes the proof.
  \end{proof}
	
	\section{Exact cells}
	In this section we introduce the notion of `exact cell' in double categories. In the double category $\Prof$ of unenriched profunctors exact cells generalise the `carr\'es exacts' that were studied by Guitart in \cite{Guitart80}, as we shall see in \exref{example: carres exacts}.
	
	\begin{definition} \label{definition: exact cell}
		In a double category consider a cell $\phi$ as on the left below. We call $\phi$ \emph{(pointwise) right exact} if for any cell $\eps$, as on the right, that defines $r$ as the (pointwise) right Kan extension of $d$ along $K$, the composite $\eps \of \phi$ defines $r \of f$ as the (pointwise) right Kan extension of $d \of g$ along $J$. If the converse holds as well then we call $\phi$ \emph{(pointwise) initial}.
		\begin{displaymath}
			\begin{tikzpicture}[baseline]
				\matrix(m)[math175em]{A & B \\ C & D \\};
				\path[map]  (m-1-1) edge[barred] node[above] {$J$} (m-1-2)
														edge node[left] {$f$} (m-2-1)
										(m-1-2) edge node[right] {$g$} (m-2-2)
										(m-2-1) edge[barred] node[below] {$K$} (m-2-2);
				\path[transform canvas={shift={($(m-1-2)!(0,0)!(m-2-2)$)}}] (m-1-1) edge[cell] node[right] {$\phi$} (m-2-1);
			\end{tikzpicture} \qquad\qquad\qquad\qquad \begin{tikzpicture}[baseline]
  			\matrix(m)[math175em]{C & D \\ M & M \\};
  			\path[map]  (m-1-1) edge[barred] node[above] {$K$} (m-1-2)
														edge node[left] {$r$} (m-2-1)
										(m-1-2) edge node[right] {$d$} (m-2-2);
				\path				(m-2-1) edge[eq] (m-2-2);
				\path[transform canvas={shift={($(m-1-2)!(0,0)!(m-2-2)$)}}] (m-1-1) edge[cell] node[right] {$\eps$} (m-2-1);
			\end{tikzpicture}
		\end{displaymath}
	\end{definition}
	Notice that both the class of (pointwise) right exact cells, as well as that of (pointwise) initial cells, are closed under vertical composition. 
	
	The remainder of this section describes right exact cells and initial cells.
	\begin{proposition} \label{initial cells}
		In a double category consider a morphism $\map gBD$. If the companion $\hmap{g_*}BD$ of $g$ exists then any opcartesian cell of the form below is both initial and pointwise initial.
		\begin{displaymath}
			\begin{tikzpicture}
				\matrix(m)[math175em]{A & B \\ A & D \\};
				\path[map]	(m-1-1) edge[barred] node[above] {$J$} (m-1-2)
										(m-1-2) edge node[right] {$g$} (m-2-2)
										(m-2-1) edge[barred] node[below] {$K$} (m-2-2);
				\path				(m-1-1) edge[eq] (m-2-1);
				\draw				($(m-1-1)!0.5!(m-2-2)$)	node {\oc};
			\end{tikzpicture}
		\end{displaymath}
	\end{proposition}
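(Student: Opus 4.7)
The plan is to prove the stronger biconditional statement that, for any cell $\eps$, $\eps$ defines $r$ as the pointwise right Kan extension of $d$ along $K$ if and only if $\eps \of \phi$ defines $r$ as the pointwise right Kan extension of $d \of g$ along $J$. This biconditional gives pointwise initiality, and its non-pointwise analogue, obtained by specialisation, gives initiality.

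The key preliminary step is to verify that, for every horizontal morphism $\hmap HCA$, the horizontal composite $\id_H \hc \phi$ is again opcartesian. This follows from the horizontal dual of \lemref{composing cartesian and opcartesian cells}(b), taking $\phi$ in the role of the opcartesian cell and $\id_H$ in the role of the cartesian cell (note that $\id_H$ is cartesian because any cell with horizontal target $H$ factors through it as itself).

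With this in hand, the main step is to set up a bijection using \defref{definition: pointwise right Kan extension}. Any cell $\chi$ with horizontal source $H \hc J$, vertical source $\map sCM$, vertical target $d \of g$ and horizontal target $1_M$ factors uniquely as $\chi = \tilde\chi \of (\id_H \hc \phi)$ for a unique cell $\tilde\chi$ with horizontal source $H \hc K$, vertical source $s$, vertical target $d$ and horizontal target $1_M$. The interchange law, modulo coherence, then gives
\[
(\chi' \hc \eps) \of (\id_H \hc \phi) = \chi' \hc (\eps \of \phi)
\]
for any vertical cell $\cell{\chi'}sr$, so that a factorisation $\chi = \chi' \hc (\eps \of \phi)$ corresponds under the bijection $\chi \leftrightarrow \tilde\chi$ to a factorisation $\tilde\chi = \chi' \hc \eps$. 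Uniqueness transfers across the bijection in both directions, proving the pointwise biconditional. The non-pointwise case is the specialisation $H = 1_A$, where $\id_{1_A} \hc \phi \iso \phi$ via the unitor, and the cells $\chi$ above reduce to those of \defref{definition: right Kan extension}.

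I expect the main obstacle to be purely bookkeeping: tracking the unitors in the specialisation $H = 1_A$, and verifying that $\id_H \hc \phi$ genuinely fits the configuration of \lemref{composing cartesian and opcartesian cells} so that its horizontal dual applies.
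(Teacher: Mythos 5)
Your proof is correct, but it takes a genuinely different route from the paper's. The paper first invokes \lemref{cartesian and opcartesian cells in terms of companions and conjoints} to assume without loss of generality that $\phi = \id_J \hc \ls g\eta$ and $K = J \hc g_*$, then uses the companion identity $\ls g\eta \hc \ls g\eps = \id_{g_*}$ to write $\eps = (\eps \of \phi) \hc (1_d \of \ls g\eps)$; since the second factor defines $d \of g$ as the pointwise right Kan extension of $d$ along $g_*$ by \lemref{precomposition}, the biconditional follows from the pasting lemma for pointwise Kan extensions. You instead argue directly from the universal properties: the opcartesianness of $\id_H \hc \phi$ sets up a bijection $\chi \leftrightarrow \tilde\chi$ between the cells to be tested against $\eps \of \phi$ and those to be tested against $\eps$, and the interchange law transports factorisations (and their uniqueness) across it. Your argument is more elementary and self-contained---in effect it re-proves the relevant instance of the pasting lemma, whose proof the paper omits---whereas the paper's is shorter and displays the conceptual point that $\phi$ splits $\eps$ into $\eps \of \phi$ followed by a precomposition cell. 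One small repair: \lemref{composing cartesian and opcartesian cells} is stated for equipments, while the proposition assumes only that the companion of $g$ exists. The instance you need (where the shared vertical boundary of $\id_H$ and $\phi$ is an identity) does hold under this weaker hypothesis; the cleanest fix is to note that $\phi$, and hence $\id_H \hc \phi$, is isomorphic to a composite of the form $\id_{H \hc J} \hc \ls g\eta$, which is opcartesian by \lemref{cartesian and opcartesian cells in terms of companions and conjoints}, a lemma stated for general double categories.
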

	\begin{proof}
		First notice that by \lemref{cartesian and opcartesian cells in terms of companions and conjoints} we may, without loss of generality, assume that the opcartesian cell above is given by the horizontal composite of $\id_J$ and the opcartesian cell defining $g_*$; in particular we thus have $K = J \hc g_*$.
		
		For any cell $\eps$ as on the left below consider the following identity, which follows from one of the companion identities for $g_*$. We have to show that $\eps$ defines $r$ as a (pointwise) right Kan extension if and only if the composite of the first two columns on the right-hand side does so.
		\begin{displaymath}
			\begin{tikzpicture}[textbaseline]
				\matrix(m)[math175em]{A & B & D \\ M & \phantom M & M \\};
				\path[map]	(m-1-1) edge[barred] node[above] {$J$} (m-1-2)
														edge node[left] {$r$} (m-2-1)
										(m-1-2) edge[barred] node[above] {$g_*$} (m-1-3)
										(m-1-3) edge node[right] {$d$} (m-2-3);
				\path				(m-2-1) edge[eq] (m-2-3)
										(m-1-2) edge[cell] node[right] {$\eps$} (m-2-2);
			\end{tikzpicture} = \begin{tikzpicture}[textbaseline]
				\matrix(m)[math175em]{A & B & B & D \\ A & B & D & D \\ M & \phantom M & M & M \\};
				\path[map]	(m-1-1) edge[barred] node[above] {$J$} (m-1-2)
										(m-1-3) edge[barred] node[above] {$g_*$} (m-1-4)
														edge node[right, inner sep=1pt] {$g$} (m-2-3)
										(m-2-1) edge[barred] node[above] {$J$} (m-2-2)
														edge node[left] {$r$} (m-3-1)
										(m-2-2) edge[barred] node[below] {$g_*$} (m-2-3)
										(m-2-3) edge node[right] {$d$} (m-3-3)
										(m-2-4) edge node[right] {$d$} (m-3-4);
				\path				(m-1-1) edge[eq] (m-2-1)
										(m-1-2) edge[eq] (m-1-3)
														edge[eq] (m-2-2)
										(m-1-4) edge[eq] (m-2-4)
										(m-2-3) edge[eq] (m-2-4)
										(m-3-1) edge[eq] (m-3-3)
										(m-3-3) edge[eq] (m-3-4)
										(m-2-2) edge[cell] node[right] {$\eps$} (m-3-2);
				\draw				($(m-1-2)!0.5!(m-2-3)$) node {\oc}
										($(m-1-3)!0.5!(m-2-4)$) node {\cc};
			\end{tikzpicture}
		\end{displaymath}
		The last column of the right-hand side above defines $d \of g$ as a pointwise right Kan extension by \lemref{precomposition}, so that the proof follows from the pasting lemma.
	\end{proof}
	
	The following example shows that the notion of initial cell extends the classical notion of `initial functor'.
	\begin{example}
		Recall that a functor $\map gBD$ is called \emph{initial} if, for each $x \in D$, the comma category $g \slash x$ is connected; see for example Section 2.5 of \cite{Kashiwara-Schapira06}, where such functors are called `co-cofinal'. Writing $\map !D1$ for the unique functor into the terminal category $1$, we claim that $\map gBD$ is initial precisely if the obvious cartesian cell
		\begin{displaymath}
			\begin{tikzpicture}
					\matrix(m)[math175em]{1 & B \\ 1 & D \\};
					\path[map]	(m-1-1) edge[barred] node[above] {$!^*$} (m-1-2)
											(m-1-2) edge node[right] {$g$} (m-2-2)
											(m-2-1) edge[barred] node[below] {$!^*$} (m-2-2);
					\path				(m-1-1) edge[eq] (m-2-1);
					\draw				($(m-1-1)!0.5!(m-2-2)$) node {\cc};
			\end{tikzpicture}
		\end{displaymath}
		is opcartesian in $\Prof$. To see this, notice that the latter is equivalent to the horizontal cell $\cell{\textup{cart}_*}{!^* \hc g_*}{!^*}$, that is the factorisation of the cell above defined by \eqref{companion and conjoint factorisations}, being invertible. Remembering the coequalisers \eqref{composition of unenriched profunctors} that define horizontal composition in $\Prof$, the latter is readily seen to be equivalent to the comma categories $g \slash x$ being connected for each $x$.
			
		Initial functors are useful because diagrams $\map dDM$ can be restricted along $g$ without changing their limits: formally, if either $\lim d$ or $\lim (d \of g)$ exists then so does the other, and in that case the canonical morphism
		\begin{displaymath}
			\lim d \to \lim (d \of g)
		\end{displaymath}
		is an isomorphism. This result can be recovered by applying the previous proposition to the cartesian cell above, by using \propref{right Kan extensions along conjoints as right Kan extensions in V(K)} and the well-known fact that the limits of $d$ and $d \of g$ coincide with the right Kan extensions of $d$ and $d \of g$ along terminal functors (see Theorem X.7.1 of \cite{MacLane98} for the dual result on colimits).
	\end{example}
	
	\begin{proposition} \label{pointwise exact cartesian cells}
		In a double category consider a morphism $\map fAC$. If the companion $\hmap{f_*}AC$ of $f$ exists then any cartesian cell of the form below is pointwise right exact.
		\begin{displaymath}
			\begin{tikzpicture}
				\matrix(m)[math175em]{A & B \\ C & B \\};
					\path[map]	(m-1-1) edge[barred] node[above] {$J$} (m-1-2)
															edge node[left] {$f$} (m-2-1)
											(m-2-1) edge[barred] node[below] {$K$} (m-2-2);
					\path				(m-1-2) edge[eq] (m-2-2);
					\draw				($(m-1-1)!0.5!(m-2-2)$) node {\cc};
			\end{tikzpicture}
		\end{displaymath}
	\end{proposition}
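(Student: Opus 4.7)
The plan is to reduce $\eps \of \phi$ to a pointwise right Kan extension built from the pasting lemma, via the companion $f_*$. First, by \lemref{cartesian and opcartesian cells in terms of companions and conjoints}, applied with the trivial conjoint $\id_B^* = 1_B$, the horizontal composite $\phi_0 = \ls f \eps \hc \id_K$ (modulo unitors) forms a cartesian cell $f_* \hc K \Rightarrow K$ with the same vertical boundaries $f$ and $\id_B$ as $\phi$. Since $\phi$ and $\phi_0$ are both cartesian of the same shape, the universal property yields a unique invertible horizontal cell $\beta \colon J \iso f_* \hc K$ with $\phi_0 \of \beta = \phi$, so that $\eps \of \phi = (\eps \of \phi_0) \of \beta$.

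Next, by \lemref{precomposition} the cell $\gamma$ obtained by stacking $1_r$ below the cartesian cell $\ls f \eps$ defines $r \of f$ as the pointwise right Kan extension of $r$ along $f_*$. Combining this with the hypothesis that $\eps$ defines $r$ as the pointwise right Kan extension of $d$ along $K$, the pasting lemma for pointwise Kan extensions implies that $\gamma \hc \eps$ defines $r \of f$ as the pointwise right Kan extension of $d$ along $f_* \hc K$. The interchange law then computes, modulo unitors,
\begin{displaymath}
	\gamma \hc \eps = (1_r \of \ls f \eps) \hc (\eps \of \id_K) = (1_r \hc \eps) \of (\ls f \eps \hc \id_K) = \eps \of \phi_0,
\end{displaymath}
so that $\eps \of \phi_0$ itself exhibits this pointwise right Kan extension.

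Finally, we transfer the universal property from $\eps \of \phi_0$ to $\eps \of \phi$ via $\beta$. Given any cell $\chi \colon H \hc J \Rightarrow 1_M$ with left boundary $s$ and right boundary $d$, the precomposition $\tilde\chi = \chi \of (\id_H \hc \inv\beta) \colon H \hc (f_* \hc K) \Rightarrow 1_M$ factors uniquely through $\eps \of \phi_0$ as $\tilde\chi = \chi' \hc (\eps \of \phi_0)$ for some $\chi' \colon H \Rightarrow 1_M$ of boundaries $s$ and $r \of f$. Composing back with $\id_H \hc \beta$ and applying the interchange law yields $\chi = \chi' \hc \bigpars{(\eps \of \phi_0) \of \beta} = \chi' \hc (\eps \of \phi)$, and uniqueness of $\chi'$ is inherited from the factorization through $\eps \of \phi_0$ by invertibility of $\beta$. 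Hence $\eps \of \phi$ defines $r \of f$ as the pointwise right Kan extension of $d$ along $J$, which means $\phi$ is pointwise right exact.

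The main obstacle is the coherence bookkeeping, especially in verifying the identity $\gamma \hc \eps = \eps \of \phi_0$ and in transferring factorizations across $\beta$ in the last step; these steps are routine but notationally heavy, and the paper's convention of suppressing unitors considerably lightens the work.
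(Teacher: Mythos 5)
Your proof is correct and follows essentially the same route as the paper: reduce to the standard cartesian cell $\ls f\eps \hc \id_K$, apply \lemref{precomposition} to get that the first column defines $r \of f$ as a pointwise right Kan extension of $r$ along $f_*$, and conclude by the pasting lemma. The only difference is that you spell out, via the invertible comparison cell $\beta$ and the transfer of factorisations across it, what the paper compresses into ``we may, without loss of generality, assume'' that the given cartesian cell is the composite one.
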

	\begin{proof}
		By \lemref{cartesian and opcartesian cells in terms of companions and conjoints} we may, without loss of generality, assume that the cartesian cell above is given by the horizontal composite of the cartesian cell defining $f_*$ and the identity $\id_K$; in particular we thus have $J = f_* \hc K$.
		
		Consider a cell $\eps$ as in the composite below and assume that it defines $r$ as the pointwise right Kan extension of $d$ along $K$; we have to prove that the full composite defines $r \of f$ as a pointwise left Kan extension.
		\begin{displaymath}
			\begin{tikzpicture}
				\matrix(m)[math175em]{A & C & B \\ C & C & B \\ M & M & M \\};
				\path[map]	(m-1-1) edge[barred] node[above] {$f_*$} (m-1-2)
														edge node[left] {$f$} (m-2-1)
										(m-1-2) edge[barred] node[above] {$K$} (m-1-3)
										(m-2-1) edge node[left] {$r$} (m-3-1)
										(m-2-2) edge[barred] node[above] {$K$} (m-2-3)
														edge node[left] {$r$} (m-3-2)
										(m-2-3) edge node[right] {$d$} (m-3-3);
				\path				(m-1-2) edge[eq] (m-2-2)
										(m-1-3) edge[eq] (m-2-3)
										(m-2-1) edge[eq] (m-2-2)
										(m-3-1) edge[eq] (m-3-2)
										(m-3-2) edge[eq] (m-3-3);
				\path[transform canvas={shift={($(m-2-2)!0.5!(m-2-3)$)}}]	(m-2-2) edge[cell] node[right] {$\eps$} (m-3-2);
				\draw				($(m-1-1)!0.5!(m-2-2)$) node {\cc};
			\end{tikzpicture}
		\end{displaymath}
		That it does follows from the fact that the first column defines $r \of f$ as a pointwise right Kan extension by \lemref{precomposition}, together with the pasting lemma.
	\end{proof}
	
	The following corollary combines Propositions \ref{initial cells} and \ref{pointwise exact cartesian cells}.
	\begin{corollary} \label{pointwise exact cells}
		In a double category consider a cell $\phi$ below, and assume that the companions $\hmap{f_*}AC$ and $\hmap{g_*}BD$ of $f$ and $g$ exist.
		\begin{displaymath}
			\begin{tikzpicture}
				\matrix(m)[math175em]{A & B \\ C & D \\};
				\path[map]	(m-1-1) edge[barred] node[above] {$J$} (m-1-2)
														edge node[left] {$f$} (m-2-1)
										(m-1-2) edge node[right] {$g$} (m-2-2)
										(m-2-1) edge[barred] node[below] {$K$} (m-2-2);
				\path[transform canvas={shift={($(m-1-2)!(0,0)!(m-2-2)$)}}] (m-1-1) edge[cell] node[right] {$\phi$} (m-2-1);
			\end{tikzpicture}
		\end{displaymath}
		The cell $\phi$ is pointwise right exact as soon as the horizontal cell $\cell{\phi_*}{J \hc g_*}{f_* \hc K}$, that is defined by \eqref{companion and conjoint factorisations}, is invertible.
	\end{corollary}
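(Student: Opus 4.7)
The plan is to decompose $\phi$ according to the identity on the left of \eqref{companion and conjoint factorisations}, which exhibits $\phi$ as the vertical composite of three cells: an opcartesian cell with horizontal identity on its left and $g$ on its right, producing $J \hc g_*$ as its bottom target; followed by the horizontal cell $\cell{\phi_*}{J \hc g_*}{f_* \hc K}$, which is invertible by hypothesis; followed by a cartesian cell with $f$ on its left and horizontal identity on its right, whose top source $f_* \hc K$ is sent to $K$. Since the class of pointwise right exact cells is closed under vertical composition, as noted directly after \defref{definition: exact cell}, it suffices to verify that each of these three factors is pointwise right exact.

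The first factor is pointwise initial by \propref{initial cells}, and hence in particular pointwise right exact. The third factor is pointwise right exact by \propref{pointwise exact cartesian cells}. It remains to handle the middle factor $\phi_*$. Since $\phi_*$ is invertible and has identity vertical source and target, horizontally composing with $\phi_*$ sets up a bijection, with inverse given by horizontal composition with $\inv{(\phi_*)}$, between cells of the form required by \defref{definition: pointwise right Kan extension} whose horizontal source contains $f_* \hc K$ and those whose corresponding source contains $J \hc g_*$. This bijection clearly preserves the property of factoring uniquely through any given cell $\eps$: if $\cell\eps{f_* \hc K}{1_M}$ defines $r$ as the pointwise right Kan extension of $d$ along $f_* \hc K$, then $\eps \of \phi_*$ defines $r$ as the pointwise right Kan extension of $d$ along $J \hc g_*$, so $\phi_*$ is pointwise right exact.

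The only potential obstacle is making the bookkeeping of shapes in the last paragraph precise, but this is purely formal: one checks that replacing a factor $J \hc g_*$ in the source of any cell $\phi'$ in \eqref{pointwise right Kan extension factorisation} by $f_* \hc K$ via the isomorphism $\phi_*$ commutes with vertical factorisation through $\eps$. Combining the three steps then yields the corollary.
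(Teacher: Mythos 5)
Your proof is correct and follows essentially the same route as the paper: decompose $\phi$ via the left identity of \eqref{companion and conjoint factorisations} and invoke \propref{initial cells} and \propref{pointwise exact cartesian cells}. The only (harmless) difference is that you treat the invertible cell $\phi_*$ as a separate third factor and check directly that it is pointwise right exact, whereas the paper absorbs it into the opcartesian factor, using that the composite of an opcartesian cell with an invertible horizontal cell still defines the extension of $J$ along $g$.
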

	\begin{proof}
		If $\phi_*$ is invertible then the left identity of \eqref{companion and conjoint factorisations} factors $\phi$ into an opcartesian cell, which defines the extension of $J$ along $g$, followed by a cartesian cell which defines the restriction of $K$ along $f$. The first is pointwise right exact by \propref{initial cells} and the second by \propref{pointwise exact cartesian cells}, so that their vertical composite $\phi$ is pointwise right exact too.
	\end{proof}
	\begin{example} \label{example: carres exacts}
		In the double category $\Prof$ of unenriched profunctors consider a transformation $\phi$ of functors as on the left below, as well as its factorisation $\phi'$, as shown.
		\begin{displaymath}
			\begin{tikzpicture}[textbaseline]
				\matrix(m)[math175em]{B & B \\ A & D \\ C & C \\};
				\path[map]	(m-1-1) edge node[left] {$j$} (m-2-1)
										(m-1-2) edge node[right] {$g$} (m-2-2)
										(m-2-1) edge node[left] {$f$} (m-3-1)
										(m-2-2) edge node[right] {$k$} (m-3-2);
				\path				(m-1-1) edge[eq] (m-1-2)
										(m-3-1) edge[eq] (m-3-2);
				\path[transform canvas={shift={($(m-2-2)!0.5!(m-3-2)$)}}]	(m-1-1) edge[cell] node[right] {$\phi$} (m-2-1);
			\end{tikzpicture} = \begin{tikzpicture}[textbaseline]
				\matrix(m)[math175em]{B & B \\ A & B \\ C & D \\ C & C \\};
				\path[map]	(m-1-1) edge node[left] {$j$} (m-2-1)
										(m-2-1) edge[barred] node[below] {$j^*$} (m-2-2)
														edge node[left] {$f$} (m-3-1)
										(m-2-2) edge node[right] {$g$} (m-3-2)
										(m-3-1) edge[barred] node[below] {$k^*$} (m-3-2)
										(m-3-2) edge node[right] {$k$} (m-4-2);
				\path				(m-1-1) edge[eq] (m-1-2)
										(m-1-2) edge[eq] (m-2-2)
										(m-3-1) edge[eq] (m-4-1)
										(m-4-1) edge[eq] (m-4-2);
				\path[transform canvas={shift={($(m-2-2)!0.5!(m-3-2)$)}, yshift=-3pt}]	(m-2-1) edge[cell] node[right] {$\phi'$} (m-3-1);
				\draw				($(m-1-1)!0.5!(m-2-2)$) node {\oc}
										($(m-3-1)!0.5!(m-4-2)$) node {\cc};
			\end{tikzpicture}
		\end{displaymath}
		Using $f_* \hc k^* \iso C(f, k)$, the hypothesis of the previous corollary for the factorisation $\phi'$ means that the maps $\map{\phi'_*}{\int^{b \in B} A(a, jb) \times D(gb, d)}{C(fa, kd)}$, that are induced by the assignments
		\begin{displaymath}
			(a \xrar u jb, gb \xrar v d) \mapsto (fa \xrar{fu} fjb \xrar{\phi_b} kgb \xrar{kv} kd),
		\end{displaymath}
		are bijections. In Section 1 of \cite{Guitart80} a natural transformation $\phi$ as on the left above, such that $\phi'_*$ is invertible, is called a `carr\'e exact'; this explains our terminology of `right exact cell'. The condition of $\phi'_*$ being an isomorphism is known as the \emph{right Beck-Chevalley condition}; in $\Prof$ this condition is in fact equivalent to $\phi'$ being a right exact cell in the sense of \defref{definition: exact cell}, as follows from Theorem 1.10 of \cite{Guitart80}.
	\end{example}
	
	\section{Pointwise Kan extensions in terms of Kan extensions}
	In this section we show that if an equipment $\K$ has all `opcartesian tabulations' then pointwise Kan extensions in $\K$ (\defref{definition: pointwise right Kan extension}) can be defined in terms of Kan extensions (\defref{definition: right Kan extension}). This is analogous to Street's definition of pointwise Kan extension in $2$-categories, that was introduced in \cite{Street74}, which we recall first.
	
	\subsection{Pointwise Kan extensions in \texorpdfstring{$2$}{2}-categories.}
	Street's definition of pointwise Kan extension in $2$-categories uses the notion of `comma object' in a $2$-category, which is recalled below. It generalises the well-known notion of a `comma category' $f \slash g$ of two functors $\map fAC$ and $\map gBC$, in the $2$-category $\Cat$ of categories, functors and transformations; see Section II.6 of \cite{MacLane98}.
	\begin{definition} \label{definition: comma object}
		Given two morphisms $\map fAC$ and $\map gBC$ in a $2$-category $\C$, the \emph{comma object} $f \slash g$ of $f$ and $g$ consists of an object $f \slash g$ equipped with projections $\pi_A$ and $\pi_B$, as in the diagram on the left below, as well as a cell $\cell\pi{f \of \pi_A}{g \of \pi_B}$, that satisfies the following $1$-dimensional and $2$-dimensional universal properties.
		\begin{displaymath}
			\begin{tikzpicture}[textbaseline]
				\matrix(m)[math2em]{f \slash g &[-3pt] A \\ B & C \\};
				\path[map]	(m-1-1) edge node[above] {$\pi_A$} (m-1-2)
														edge node[left] {$\pi_B$} (m-2-1)
										(m-1-2) edge node[right] {$f$} (m-2-2)
										(m-2-1) edge node[below] {$g$} (m-2-2);
				\path				(m-1-2) edge[cell, shorten >= 7pt, shorten <= 7pt] node[below right] {$\pi$} (m-2-1);
			\end{tikzpicture} \qquad \begin{tikzpicture}[textbaseline]
				\matrix(m)[math2em]{X & A \\ B & C \\};
				\path[map]	(m-1-1) edge node[above] {$\phi_A$} (m-1-2)
														edge node[left] {$\phi_B$} (m-2-1)
										(m-1-2) edge node[right] {$f$} (m-2-2)
										(m-2-1) edge node[below] {$g$} (m-2-2);
				\path				(m-1-2) edge[cell, shorten >= 7pt, shorten <= 7pt] node[below right] {$\phi$} (m-2-1);
			\end{tikzpicture} \qquad \begin{tikzpicture}[textbaseline]
				\matrix(m)[math2em]{X & A \\ B & C \\};
				\path[map]	(m-1-1) edge[bend left=35] node[above] {$\phi_A$} (m-1-2)
														edge[bend right=35] node[below left, inner sep=2pt] {$\psi_A$} (m-1-2)
														edge node[left] {$\psi_B$} (m-2-1)
										(m-1-2) edge node[right] {$f$} (m-2-2)
										(m-2-1) edge node[below] {$g$} (m-2-2)
										($(m-1-1)!0.5!(m-1-2)+(-0.3em,0.75em)$) edge[cell] node[right] {$\xi_A$} ($(m-1-1)!0.5!(m-1-2)-(0.3em,0.75em)$)
										(m-1-2) edge[cell, shorten >= 8pt, shorten <= 8pt, transform canvas={xshift=4pt, yshift=-4pt}] node[below right, inner sep=1pt] {$\psi$} (m-2-1);
			\end{tikzpicture} = \begin{tikzpicture}[textbaseline]
				\matrix(m)[math2em]{X & A \\ B & C \\};
				\path[map]	(m-1-1) edge node[above] {$\phi_A$} (m-1-2)
														edge[bend left=35] node[above right, inner sep=2pt] {$\phi_B$} (m-2-1)
														edge[bend right=35] node[left] {$\psi_B$} (m-2-1)
										(m-1-2) edge node[right] {$f$} (m-2-2)
										(m-2-1) edge node[below] {$g$} (m-2-2)
										($(m-1-1)!0.5!(m-2-1)+(0.75em,-0.3em)$) edge[cell] node[above] {$\xi_B$} ($(m-1-1)!0.5!(m-2-1)-(0.75em,0.3em)$)
										(m-1-2) edge[cell, shorten >= 8pt, shorten <= 8pt, transform canvas={xshift=4pt, yshift=-4pt}] node[below right, inner sep=1pt] {$\phi$} (m-2-1);
			\end{tikzpicture}
		\end{displaymath}
		Given any other cell $\phi$ in $\C$ as in the middle above, the $1$-dimensional property states that there exists a unique morphism $\map{\phi'}X{f \slash g}$ such that $\pi_A \of \phi' = \phi_A$, \mbox{$\pi_B \of \phi' = \phi_B$} and $\pi \of \phi' = \phi$.
	
		The $2$-dimensional property is the following. Suppose we are given another cell $\psi$ as in the identity on the right above, which factors through $\pi$ as $\map{\psi'}X{f \slash g}$, like $\phi$ factors as $\phi'$. Then for any pair of cells $\cell{\xi_A}{\phi_A}{\psi_A}$ and $\cell{\xi_B}{\phi_B}{\psi_B}$, such that the identity above holds, there exists a unique cell $\cell{\xi'}{\phi'}{\psi'}$ such that $\pi_A \of \xi' = \xi_A$ and $\pi_B \of \xi' = \xi_B$.
	\end{definition}
	We can now give Street's definition of pointwise left Kan extension, that was introduced in Section 4 of \cite{Street74}.
	\begin{definition}[Street] \label{definition: pointwise left Kan extensions in 2-categories}
		Let $\map dAM$ and $\map jAB$ be morphisms in a $2$\ndash category that has all comma objects. The cell $\cell\eps {r \of j}d$ in the diagram below is said to define $r$ as the \emph{pointwise right Kan extension of $d$ along $j$} if, for each $\map fCB$, the composition of cells below defines $r \of f$ as the right Kan extension of $d \of \pi_A$ along $\pi_C$.
		\begin{displaymath}
			\begin{tikzpicture}
				\matrix(m)[math2em, column sep=0.25em, row sep=2.5em]{f \slash j & & C \\ A & & B \\[-0.6em] \phantom M & M & \phantom M \\};
				\path[map]	(m-1-1) edge node[above] {$\pi_C$} (m-1-3)
														edge node[left] {$\pi_A$} (m-2-1)
										(m-1-3) edge node[right] {$f$} (m-2-3)
										(m-2-1) edge node[above] {$j$} (m-2-3)
														edge node[below left] {$d$} (m-3-2)
										(m-2-3) edge node[below right] {$r$} (m-3-2);
				\path				(m-1-3) edge[cell, shorten >=12pt, shorten <=12pt] node[above left] {$\pi$} (m-2-1)
										(m-2-3) edge[cell, shorten >=8pt, shorten <=8pt] node[below right, xshift=-2pt] {$\eps$} ($(m-2-1)!0.5!(m-3-2)$);
			\end{tikzpicture}
		\end{displaymath}
	\end{definition}
	
	\begin{remark} We remark that Street's notion of pointwise right Kan extension is in general stronger than Dubuc's notion of pointwise right Kan extension along enriched functors, that was recalled in \defref{definition: pointwise Kan extension along enriched functors}. For instance, an example of an enriched left Kan extension of $2$\ndash functors that is not pointwise in the $2$-category of $2$-categories, $2$-functors and $2$-transformations, in the above sense, is given in Example 2.24 of \cite{Koudenburg13}. Thus pointwise right Kan extensions in $\enProf\V$, as defined in \defref{definition: pointwise right Kan extension in right closed equipments}, will in general not be pointwise in the $2$-category $\enCat\V = V(\enProf\V)$, in the above sense. In the next subsection we will consider a condition on equipments $\K$ ensuring that pointwise right Kan extensions in $\K$, as in \defref{definition: pointwise right Kan extension}, do correspond to pointwise right Kan extensions in $V(\K)$, as in the definition above; see \corref{pointwise Kan extensions as pointwise vertical Kan extensions}.
	\end{remark}
	
	\subsection{Pointwise Kan extensions in terms of Kan extensions.}
	Having recalled how Street's definition of pointwise Kan extension in a $2$-category is given in terms of ordinary Kan extensions, we now wonder whether pointwise Kan extensions in equipments $\K$ can be defined in terms of ordinary ones too. It turns out that we can as soon as $\K$ has `opcartesian tabulations'. We start by recalling the definition of tabulation from Section~5 of \cite{Grandis-Pare99}, where it is called `tabulator'. Tabulations generalise comma objects, as we will see in \propref{comma objects in terms of tabulations}.
	\begin{definition} \label{definition: tabulation}
		Given a horizontal morphism $\hmap JAB$ in a double category $\K$, the \emph{tabulation} $\tab J$ of $J$ consists of an object $\tab J$ equipped with a cell $\pi$ as on the left below, satisfying the following $1$-dimensional and $2$-dimensional universal properties.
		\begin{displaymath}
			\begin{tikzpicture}[baseline]
  			\matrix(m)[tab]{\tab J & \tab J \\ A & B \\};
  			\path[map]  (m-1-1) edge node[left] {$\pi_A$} (m-2-1)
										(m-1-2) edge node[right] {$\pi_B$} (m-2-2)
										(m-2-1) edge[barred] node[below] {$J$} (m-2-2);
				\path				(m-1-1) edge[eq] (m-1-2);
				\path[transform canvas={shift={($(m-1-2)!(0,0)!(m-2-2)$)}}] (m-1-1) edge[cell] node[right] {$\pi$} (m-2-1);
			\end{tikzpicture} \qquad\qquad\qquad\qquad\qquad \begin{tikzpicture}[baseline]
  			\matrix(m)[math175em]{X & X \\ A & B \\};
  			\path[map]  (m-1-1) edge node[left] {$\phi_A$} (m-2-1)
										(m-1-2) edge node[right] {$\phi_B$} (m-2-2)
										(m-2-1) edge[barred] node[below] {$J$} (m-2-2);
				\path				(m-1-1) edge[eq] (m-1-2);
				\path[transform canvas={shift={($(m-1-2)!(0,0)!(m-2-2)$)}}] (m-1-1) edge[cell] node[right] {$\phi$} (m-2-1);
			\end{tikzpicture}
		\end{displaymath}
		Given any other cell $\phi$ in $\K$ as on the right above, the $1$-dimensional property states that there exists a unique vertical morphism $\map{\phi'}X{\tab J}$ such that $\pi \of 1_{\phi'} = \phi$.
		
		The $2$-dimensional property is the following. Suppose we are given another cell $\psi$ as in the identity below, which factors through $\pi$ as $\map{\psi'}Y{\tab J}$, like $\phi$ factors as $\phi'$. Then for any pair of cells $\xi_A$ and $\xi_B$ as below, so that the identity holds, there exists a unique cell $\xi'$ as on the right below such that $1_{\pi_A} \of \xi' = \xi_A$ and $1_{\pi_B} \of \xi' = \xi_B$.
		\begin{displaymath}
			 \begin{tikzpicture}[textbaseline]
				\matrix(m)[math175em]{X & Y & Y \\ A & A & B \\};
				\path[map]	(m-1-1) edge[barred] node[above] {$H$} (m-1-2)
														edge node[left] {$\phi_A$} (m-2-1)
										(m-1-2) edge node[right, inner sep=1pt] {$\psi_A$} (m-2-2)
										(m-1-3) edge node[right] {$\psi_B$} (m-2-3)
										(m-2-2) edge[barred] node[below] {$J$} (m-2-3);
				\path				(m-1-2) edge[eq] (m-1-3)
										(m-2-1) edge[eq] (m-2-2);
				\path[transform canvas={shift={($(m-1-2)!0.5!(m-2-3)$)}}] (m-1-1) edge[cell] node[right] {$\xi_A$} (m-2-1)
										(m-1-2) edge[cell] node[right] {$\psi$} (m-2-2);
			\end{tikzpicture} = \begin{tikzpicture}[textbaseline]
				\matrix(m)[math175em]{X & X & Y \\ A & B & B \\};
				\path[map]	(m-1-1) edge node[left] {$\phi_A$} (m-2-1)
										(m-1-2) edge[barred] node[above] {$H$} (m-1-3)
														edge node[right, inner sep=1pt] {$\phi_B$} (m-2-2)
										(m-1-3) edge node[right] {$\psi_B$} (m-2-3)
										(m-2-1) edge[barred] node[below] {$J$} (m-2-2);
				\path				(m-1-1) edge[eq] (m-1-2)
										(m-2-2) edge[eq] (m-2-3);
				\path[transform canvas={shift={($(m-1-2)!0.5!(m-2-3)$)}}] (m-1-1) edge[cell] node[right] {$\phi$} (m-2-1)
										(m-1-2) edge[cell] node[right] {$\xi_B$} (m-2-2);
			\end{tikzpicture} \qquad \qquad \begin{tikzpicture}[textbaseline]
  			\matrix(m)[math175em]{X & Y \\ \tab J & \tab J \\};
  			\path[map]  (m-1-1) edge[barred] node[above] {$H$} (m-1-2)
														edge node[left] {$\phi'$} (m-2-1)
										(m-1-2) edge node[right] {$\psi'$} (m-2-2);
				\path				(m-2-1) edge[eq] (m-2-2);
				\path[transform canvas={shift={($(m-1-2)!(0,0)!(m-2-2)$)}}] (m-1-1) edge[cell] node[right] {$\xi'$} (m-2-1);
			\end{tikzpicture}
		\end{displaymath}
		A tabulation is called \emph{opcartesian} whenever its defining cell $\pi$ is opcartesian.
	\end{definition}
	
	As with cartesian and opcartesian cells, we often will not name tabulations but simply depict them as
	\begin{displaymath}
		\begin{tikzpicture}[baseline]
  		\matrix(m)[tab]{\tab J & \tab J \\ A & B. \\};
  		\path[map]  (m-1-1) edge node[left] {$\pi_A$} (m-2-1)
									(m-1-2) edge node[right] {$\pi_B$} (m-2-2)
									(m-2-1) edge[barred] node[below] {$J$} (m-2-2);
			\path				(m-1-1) edge[eq] (m-1-2);
			\draw				($(m-1-1)!0.5!(m-2-2)$) node {\tc};
		\end{tikzpicture}
	\end{displaymath}

	\begin{example} \label{example: tabulations of profunctors}
		The tabulation $\tab J$ of a profunctor $\map J{\op A \times B}\Set$ is the category that has triples $(a, j, b)$ as objects, where $(a, b) \in A \times B$ and $\map jab$ in $J$, while a map $(a, j, b) \to (a', j', b')$ is a pair $\map{(u, v)}{(a,b)}{(a', b')}$ in $A \times B$ making the diagram
	\begin{equation} \label{morphism in a double comma category}
		\begin{tikzpicture}[textbaseline]
			\matrix(m)[math175em]{a & b \\ a' & b' \\};
			\path[map]	(m-1-1) edge node[above] {$j$} (m-1-2)
													edge node[left] {$u$} (m-2-1)
									(m-1-2) edge node[right] {$v$} (m-2-2)
									(m-2-1) edge node[below] {$j'$} (m-2-2);
		\end{tikzpicture}
	\end{equation}
	commute in $J$. The functors $\pi_A$ and $\pi_B$ are projections and the natural transformation $\nat \pi{1_{\tab J}}J$ maps the pair $(u, v)$ to the diagonal $j' \of u = v \of j$. It is easy to check that $\pi$ satisfies both the $1$-dimensional and $2$-dimensional universal property above, and that it is opcartesian.
	\end{example}
	
	\begin{example}
		Generalising the previous example, we will show in \propref{opcartesian tabulations for internal profunctors} that all internal profunctors of the double category $\inProf\E$ admit opcartesian tabulations.
	\end{example}
	
	Comma objects and tabulations are related as follows.
	\begin{proposition} \label{comma objects in terms of tabulations}
		Let $\map fAC$ and $\map gBC$ be vertical morphisms in a double category $\K$. If both the cartesian cell and the tabulation below exist then their composite defines the object $\tab{C(f, g)}$ as the comma object of $f$ and $g$ in $V(\K)$.
		\begin{displaymath}
			\begin{tikzpicture}
				\matrix(m)[tab, column sep=0.75em]{\tab{C(f, g)} & \tab{C(f,g)} \\ A & B \\ C & C \\};
				\path[map]	(m-1-1) edge node[left] {$\pi_A$} (m-2-1)
										(m-1-2) edge node[right] {$\pi_B$} (m-2-2)
										(m-2-1) edge[barred] node[below] {$C(f, g)$} (m-2-2)
														edge node[left] {$f$} (m-3-1)
										(m-2-2) edge node[right] {$g$} (m-3-2);
				\path				(m-1-1) edge[eq] (m-1-2)
										(m-3-1) edge[eq] (m-3-2);
				\draw				($(m-1-1)!0.5!(m-2-2)$)	node {\tc}
										($(m-2-1)!0.5!(m-3-2)$) node {\cc};
			\end{tikzpicture}
		\end{displaymath}
	\end{proposition}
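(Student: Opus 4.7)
The plan is to verify directly that the composite cell $\hat\pi = \cart \of \tilde\pi$, where $\tilde\pi$ denotes the tabulation cell, satisfies both the $1$-dimensional and $2$-dimensional universal properties of the comma object $f \slash g$ in $V(\K)$. In both cases the argument is to combine the universal properties of the cartesian cell (which mediates between cells with horizontal target $1_C$ and cells with horizontal target $C(f, g)$) with those of the tabulation (which mediates between cells with horizontal target $C(f, g)$ and vertical morphisms into $\tab{C(f,g)}$).

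For the $1$-dimensional property, suppose given vertical morphisms $\phi_A \colon X \to A$, $\phi_B \colon X \to B$ and a $2$-cell $\phi \colon f \phi_A \Rar g \phi_B$ in $V(\K)$, regarded as a vertical cell in $\K$ with horizontal boundaries $1_X$ and $1_C$. First factor $\phi$ uniquely through $\cart$ as a cell $\tilde\phi$ with horizontal source $1_X$, horizontal target $C(f, g)$ and vertical boundaries $\phi_A$ and $\phi_B$; then factor $\tilde\phi$ uniquely through the tabulation cell $\tilde\pi$ as a vertical morphism $\phi' \colon X \to \tab{C(f,g)}$. By construction $\pi_A \of \phi' = \phi_A$, $\pi_B \of \phi' = \phi_B$ and $\hat\pi \of 1_{\phi'} = \cart \of \tilde\pi \of 1_{\phi'} = \cart \of \tilde\phi = \phi$, and uniqueness is immediate from combining both universal properties.

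For the $2$-dimensional property, given a second triple $(\psi_A, \psi_B, \psi)$ with factorisation $\psi'$, and $2$-cells $\xi_A \colon \phi_A \Rar \psi_A$ and $\xi_B \colon \phi_B \Rar \psi_B$ satisfying the comma pasting identity $\psi \of (f \cdot \xi_A) = (g \cdot \xi_B) \of \phi$, I want to produce the unique $\xi' \colon \phi' \Rar \psi'$ in $V(\K)$ by applying the $2$-dimensional property of the tabulation with $H = 1_X$. The key reduction is to deduce from the comma pasting identity the corresponding tabulation pasting identity $\xi_A \hc \tilde\psi = \tilde\phi \hc \xi_B$ in $\K$. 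By the uniqueness clause of the cartesian property, it suffices to check this after postcomposition with $\cart$; then by the interchange law, together with $\cart = 1_f \hc \cart = \cart \hc 1_g$ (modulo unitors) and $\cart \of \tilde\phi = \phi$, $\cart \of \tilde\psi = \psi$, both sides unfold to the two sides of the comma pasting identity. Applying the tabulation's $2$-dimensional property then yields the desired $\xi' \colon 1_X \Rar 1_{\tab{C(f,g)}}$ with $\pi_A \cdot \xi' = \xi_A$ and $\pi_B \cdot \xi' = \xi_B$, and uniqueness propagates in the same way.

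I expect the main obstacle to be largely bookkeeping: correctly managing the unitor isomorphisms $1_A \hc C(f,g) \iso C(f,g)$ and $C(f,g) \hc 1_B \iso C(f,g)$ when applying the interchange law, and keeping straight the translation between $2$-categorical whiskering in $V(\K)$ and horizontal composition with units $1_f$, $1_g$ in $\K$, as dictated by \defref{2-category and bicategory in a double category}. Once these translations are set up, the proof is just two applications of each universal property.
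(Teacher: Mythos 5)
Your proposal is correct and follows essentially the same route as the paper, which simply observes that the unique factorisations defining the comma object in $V(\K)$ correspond precisely to unique factorisations through the tabulation, mediated by the cartesian cell. You have merely spelled out the details (in particular the interchange-law computation reducing the comma pasting identity to the tabulation pasting identity) that the paper leaves implicit.
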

	\begin{proof}
		This follows immediately from the fact that the unique factorisations defining the comma object of $f$ and $g$ in $V(\K)$, as in \defref{definition: comma object}, correspond precisely to unique factorisations through the tabulation above in $\K$, as in \defref{definition: tabulation}, by factorising them through the cartesian cell.
	\end{proof}
	
	If an equipment has opcartesian tabulations then any (pointwise) right Kan extension can be regarded as a (pointwise) right Kan extension along a conjoint, as follows.
	\begin{proposition} \label{right Kan extensions along conjoints}
		In a double category consider a tabulation as on the left below and assume that the conjoint $\hmap{\pi_A^*}A{\gen J}$ of $\pi_A$ and the companion $\hmap{\pi_{B*}}{\gen J}B$ of $\pi_B$ exist. Consider the factorisation of the tabulation through the opcartesian cell defining $\pi_A^*$, as shown.
		\begin{displaymath}
			\begin{tikzpicture}[textbaseline]
  			\matrix(m)[tab]{\tab J & \tab J \\ A & B \\};
  			\path[map]  (m-1-1) edge node[left] {$\pi_A$} (m-2-1)
										(m-1-2) edge node[right] {$\pi_B$} (m-2-2)
										(m-2-1) edge[barred] node[below] {$J$} (m-2-2);
				\path				(m-1-1) edge[eq] (m-1-2);
				\draw				($(m-1-1)!0.5!(m-2-2)$) node {\tc};
			\end{tikzpicture} = \begin{tikzpicture}[textbaseline]
				\matrix(m)[tab]{\tab J & \tab J \\ A & \tab J \\ A & B \\};
				\path[map]	(m-1-1) edge node[left] {$\pi_A$} (m-2-1)
										(m-2-1) edge[barred] node[below] {$\pi_A^*$} (m-2-2)
										(m-2-2) edge node[right] {$\pi_B$} (m-3-2)
										(m-3-1) edge[barred] node[below] {$J$} (m-3-2);
				\path				(m-1-1) edge[eq] (m-1-2)
										(m-1-2) edge[eq] (m-2-2)
										(m-2-1) edge[eq] (m-3-1);
				\draw				($(m-1-1)!0.5!(m-2-2)$)	node {\oc}
										($(m-2-1)!0.5!(m-3-2)$) node[, yshift=-2pt, xshift=1.5pt] {$\strut\textup{tab}'$};
			\end{tikzpicture} \qquad\qquad\qquad \begin{tikzpicture}[textbaseline]
				\matrix(m)[math175em, column sep=1.5em]{A & \tab J \\ A & B \\ M & M \\};
				\path[map]	(m-1-1) edge[barred] node[above] {$\pi_A^*$} (m-1-2)
										(m-1-2) edge node[right] {$\pi_B$} (m-2-2)
										(m-2-1) edge[barred] node[above] {$J$} (m-2-2)
														edge node[left] {$r$} (m-3-1)
										(m-2-2) edge node[right] {$d$} (m-3-2);
				\path				(m-1-1) edge[eq] (m-2-1)
										(m-3-1) edge[eq] (m-3-2);
				\path[transform canvas={shift=(m-2-1), xshift=0.65pt}]	(m-2-2) edge[cell] node[right] {$\eps$} (m-3-2);
				\draw				($(m-1-1)!0.5!(m-2-2)$)	node[, yshift=1pt, xshift=1.5pt] {$\strut\textup{tab}'$};
			\end{tikzpicture}
		\end{displaymath}
		If the tabulation is opcartesian then a cell $\eps$, as in the composite on the right, defines $r$ as the (pointwise) right Kan extension of $d$ along $J$ if and only if the composite $\eps \of \textup{tab}'$ defines $r$ as the (pointwise) right Kan extension of $d \of \pi_B$ along $\pi_A^*$.
	\end{proposition}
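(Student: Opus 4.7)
The plan is to reduce the claim to an application of the pasting lemma for (pointwise) right Kan extensions, using \lemref{precomposition} to supply a pointwise Kan extension that sits to the right of $\eps \of \tc'$.

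Since the tabulation is opcartesian and factors through the opcartesian cell defining $\pi_A^*$, the pasting lemma for opcartesian cells implies that $\tc'$ is itself opcartesian, and so exhibits $J$ as the extension of $\pi_A^*$ along $\id_A$ and $\pi_B$. By \lemref{cartesian and opcartesian cells in terms of companions and conjoints} together with \lemref{composing cartesian and opcartesian cells}, the horizontal composite $\id_{\pi_A^*} \hc \ls{\pi_B}\eta$ is another opcartesian cell of the same shape, with horizontal target $\pi_A^* \hc \pi_{B*}$. Hence by \lemref{invertible cartesian and opcartesian cells} there is a unique invertible horizontal cell $\alpha\colon \pi_A^* \hc \pi_{B*} \Rar J$ for which, modulo the right unitor,
\begin{displaymath}
  \tc' = \alpha \of \bigpars{\id_{\pi_A^*} \hc \ls{\pi_B}\eta}.
\end{displaymath}

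Next, \lemref{precomposition} applied to $\pi_B$ and $d$ shows that the vertical composite $\pi'_d = 1_d \of \ls{\pi_B}\eps$ defines $d \of \pi_B$ as the pointwise right Kan extension of $d$ along $\pi_{B*}$. Combining the above factorisation of $\tc'$ with two applications of the interchange law and the companion identity $\ls{\pi_B}\eta \hc \ls{\pi_B}\eps = \id_{\pi_{B*}}$, one verifies the identity
\begin{displaymath}
  (\eps \of \tc') \hc \pi'_d = \eps \of \alpha,
\end{displaymath}
where associators and unitors have been suppressed throughout.

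The pasting lemma for (pointwise) right Kan extensions, applied to the horizontally composable pair $\eps \of \tc'$ and $\pi'_d$, then asserts that $\eps \of \tc'$ defines $r$ as the (pointwise) right Kan extension of $d \of \pi_B$ along $\pi_A^*$ if and only if $\eps \of \alpha$ defines $r$ as the (pointwise) right Kan extension of $d$ along $\pi_A^* \hc \pi_{B*}$. Finally, because $\alpha$ is an invertible horizontal cell, precomposition with $\alpha$ and $\alpha^{-1}$ sets up a bijection between the candidate factorisations that test the (pointwise) right Kan extension property along $J$ and those that test it along $\pi_A^* \hc \pi_{B*}$; in particular $\eps$ defines $r$ as the (pointwise) right Kan extension of $d$ along $J$ precisely when $\eps \of \alpha$ does so along $\pi_A^* \hc \pi_{B*}$. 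Chaining these two equivalences yields the proposition.

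The principal technical step is the bookkeeping identity $(\eps \of \tc') \hc \pi'_d = \eps \of \alpha$: its verification is a routine but careful manipulation of the interchange law, the companion identity for $\pi_B$, and the coherence isomorphisms, entirely parallel in flavour to the computations appearing in the proofs of \lemref{companion and conjoint factorisations functoriality} and \lemref{precomposition}.
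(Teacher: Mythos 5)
Your argument is correct and follows the same route as the paper: both begin by observing that $\textup{tab}'$ is opcartesian by the pasting lemma for opcartesian cells. The paper then simply invokes \propref{initial cells}, whose proof is exactly the argument you spell out by hand — replacing the opcartesian cell by $\id_{\pi_A^*} \hc \ls{\pi_B}\eta$ up to an invertible horizontal cell, applying \lemref{precomposition} to $\pi_B$ and $d$, and concluding with the pasting lemma for (pointwise) right Kan extensions — so the bulk of your write-up re-derives an already-established result rather than citing it.
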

	\begin{proof}
		Since $\textup{tab}$ and $\textup{opcart}$ are both opcartesian, the factorisation $\textup{tab}'$ is opcartesian by the pasting lemma, and the result follows from \propref{initial cells}.
	\end{proof}
	
	In an equipment with opcartesian tabulations, pointwise Kan extensions can be defined in terms of Kan extensions as follows.
	\begin{theorem} \label{pointwise Kan extensions in terms of Kan extensions}
		In an equipment $\K$ consider the cell $\eps$ on the left below.
		\begin{displaymath}
			\begin{tikzpicture}[baseline]
  			\matrix(m)[math175em]{A & B \\ M & M \\};
  			\path[map]  (m-1-1) edge[barred] node[above] {$J$} (m-1-2)
														edge node[left] {$r$} (m-2-1)
										(m-1-2) edge node[right] {$d$} (m-2-2);
				\path				(m-2-1) edge[eq] (m-2-2);
				\path[transform canvas={shift={($(m-1-2)!(0,0)!(m-2-2)$)}}] (m-1-1) edge[cell] node[right] {$\eps$} (m-2-1);
			\end{tikzpicture} \qquad\qquad\qquad\qquad\qquad \begin{tikzpicture}[baseline]
				\matrix(m)[math175em]{C & B \\ A & B \\ M & M \\};
				\path[map]	(m-1-1) edge[barred] node[above] {$J(f, \id)$} (m-1-2)
														edge node[left] {$f$} (m-2-1)
										(m-2-1) edge[barred] node[below] {$J$} (m-2-2)
														edge node[left] {$r$} (m-3-1)
										(m-2-2) edge node[right] {$d$} (m-3-2);
				\path				(m-1-2) edge[eq] (m-2-2)
										(m-3-1) edge[eq] (m-3-2);
				\path[transform canvas={shift=(m-2-1), yshift=-2pt}]	(m-2-2) edge[cell] node[right] {$\eps$} (m-3-2);
				\draw				($(m-1-1)!0.5!(m-2-2)$)	node {\cc};
			\end{tikzpicture} 
		\end{displaymath}
		For the following conditions the implications (a) $\Leftrightarrow$ (b) $\Rightarrow$ (c) hold, while (c) $\Rightarrow$ (a) holds as soon as $\K$ has opcartesian tabulations.
		\begin{enumerate}[label=(\alph*)]
			\item The cell $\eps$ defines $r$ as the pointwise right Kan extension of $d$ along $J$;
			\item for all $\map fCA$ the composite on the right above defines $r \of f$ as the pointwise right Kan extension of $d$ along $J(f, \id)$;
			\item for all $\map fCA$ the composite on the right above defines $r \of f$ as the right Kan extension of $d$ along $J(f, \id)$.
		\end{enumerate}
	\end{theorem}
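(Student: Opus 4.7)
The chain $(a) \Leftrightarrow (b) \Rightarrow (c)$ rests on facts already in hand. For $(a) \Rightarrow (b)$, \propref{pointwise exact cartesian cells} says the cartesian cell $J(f, \id) \Rar J$ is pointwise right exact, so vertically precomposing it with $\eps$ preserves the pointwise right Kan extension property. For $(b) \Rightarrow (a)$, specialise to $f = \id_A$: by \lemref{invertible cartesian and opcartesian cells} the cartesian cell is then invertible, so the composite in (b) and $\eps$ itself differ only by an invertible horizontal cell and therefore define the same pointwise right Kan extension. Finally $(b) \Rightarrow (c)$ is \lemref{pointwise right Kan extensions are right Kan extensions}.

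The substance of the theorem is $(c) \Rightarrow (a)$ under the hypothesis of opcartesian tabulations. Given a cell $\phi\colon H \hc J \Rar 1_M$ with $\hmap HCA$, left vertical $\map sCM$ and right vertical $d$ as in \defref{definition: pointwise right Kan extension}, the goal is a unique $\phi'\colon H \Rar 1_M$ with verticals $s$ and $r$ satisfying $\phi = \phi' \hc \eps$. The plan is to bring $\phi$ into the shape of condition (c) by pre-composing with a suitable opcartesian cell, apply (c) to extract a vertical cell $\alpha$, and then recover $\phi'$ from $\alpha$ using the opcartesian tabulation of $H$.

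Concretely, let $\tau\colon 1_{\tab H} \Rar H$ be the opcartesian tabulation of $H$, with projections $\map{\pi_C}{\tab H}C$ and $\map{\pi_A}{\tab H}A$. Pasting $\tau$ horizontally with the cartesian cell $J(\pi_A, \id) \Rar J$ yields, by \lemref{composing cartesian and opcartesian cells}, an opcartesian cell $\kappa\colon J(\pi_A, \id) \Rar H \hc J$ with verticals $\pi_C$ and $\id_B$. Applying (c) with $f = \pi_A$ exhibits $\eps \of \textup{cart}$ as a right Kan extension of $d$ along $J(\pi_A, \id)$; the cell $\phi \of \kappa\colon J(\pi_A, \id) \Rar 1_M$, with verticals $s \of \pi_C$ and $d$, therefore factors uniquely as $\alpha \hc (\eps \of \textup{cart})$ for a unique vertical $\cell\alpha{s \of \pi_C}{r \of \pi_A}$. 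The opcartesian property of $\tau$ then factors $\alpha$ uniquely as $\alpha = \phi' \of \tau$ for a unique $\phi'\colon H \Rar 1_M$ with verticals $s$ and $r$; this is the candidate factorisation.

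Both the factorisation identity and its uniqueness come from a single interchange computation,
\begin{displaymath}
(\phi' \hc \eps) \of \kappa = (\phi' \of \tau) \hc (\eps \of \textup{cart}) = \alpha \hc (\eps \of \textup{cart}) = \phi \of \kappa,
\end{displaymath}
followed by appealing to the opcartesian-ness of $\kappa$ to cancel it on both sides, yielding $\phi = \phi' \hc \eps$. Uniqueness of $\phi'$ is obtained by running the same computation for any competitor $\tilde\phi'$ with $\tilde\phi' \hc \eps = \phi$: the identity forces $\tilde\phi' \of \tau = \alpha$ by the uniqueness in (c), and then $\tilde\phi' = \phi'$ by the uniqueness in the opcartesian property of $\tau$. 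The main hurdle is bookkeeping: recognising that the four universal properties in play---opcartesian tabulation, the opcartesian composite $\kappa$, cartesian restriction, and the right Kan extension from (c)---fit together via the interchange law so as to reduce the pointwise factorisation of $\phi$ to this chain of unique factorisations.
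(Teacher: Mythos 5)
Your proposal is correct and follows essentially the same route as the paper: the easy implications are handled by \propref{pointwise exact cartesian cells}, \lemref{invertible cartesian and opcartesian cells} and \lemref{pointwise right Kan extensions are right Kan extensions}, and for (c) $\Rightarrow$ (a) the paper likewise forms the opcartesian composite of the tabulation of $H$ with the cartesian cell defining $J(\pi_A,\id)$, factors $\phi$ precomposed with it through $\eps \of \textup{cart}$ using (c), and then through the opcartesian tabulation, with existence and uniqueness both extracted via the interchange law and the uniqueness of factorisations through opcartesian cells. Your $\kappa$, $\alpha$ and $\phi'$ are exactly the paper's $\xi$, $\phi'$ and $\phi''$.
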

	\begin{proof}
		(a) $\Rightarrow$ (b) follows from \propref{pointwise exact cartesian cells}.
		
		(b) $\Rightarrow$ (a). By taking $f = \id_A$ in (b) the cartesian cell defining $J(f, \id)$ is invertible by \lemref{invertible cartesian and opcartesian cells}, so in that case $\eps \of \textup{cart}$ defines $r$ as the pointwise right Kan extension precisely if $\eps$ does, and the implication follows.
		
		(b) $\Rightarrow$ (c) follows from \lemref{pointwise right Kan extensions are right Kan extensions}.
		
		(c) $\Rightarrow$ (a). To prove (a) we have to show that every cell $\phi$ as in the composite on the left below factors uniquely through $\eps$.
		\begin{displaymath}
			\begin{tikzpicture}[textbaseline]
  			\matrix(m)[math175em]{\tab H & \tab H &[0.4em] B \\ C & A & B \\ M & \phantom A & M \\};
  			\path[map]	(m-1-1) edge node[left] {$\pi_C$} (m-2-1)
  									(m-1-2) edge[barred] node[above] {$J(\pi_A, \id)$} (m-1-3)
  													edge node[right, inner sep=2pt] {$\pi_A$} (m-2-2)
  									(m-2-1) edge[barred] node[below] {$H$} (m-2-2)
  													edge node[left] {$s$} (m-3-1)
  									(m-2-2) edge[barred] node[below] {$J$} (m-2-3)
  									(m-2-3) edge node[right] {$d$} (m-3-3)
  									(m-2-2) edge[cell] node[right] {$\phi$} (m-3-2);
 		 		\path				(m-1-1) edge[eq] (m-1-2)
  									(m-1-3) edge[eq] (m-2-3)
  									(m-3-1) edge[eq] (m-3-3);
  			\draw				($(m-1-1)!0.5!(m-2-2)$) node {\tc}
  									($(m-1-2)!0.5!(m-2-3)$) node {\cc};
  		\end{tikzpicture} = \mspace{-10mu}\begin{tikzpicture}[textbaseline]
  			\matrix(m)[math175em]{\tab H & \tab H &[0.4em] B \\ C & A & B \\ M & M & M \\};
  			\path[map]	(m-1-1) edge node[left] {$\pi_C$} (m-2-1)
  									(m-1-2) edge[barred] node[above] {$J(\pi_A, \id)$} (m-1-3)
  													edge node[left] {$\pi_A$} (m-2-2)
  									(m-2-1) edge node[left] {$s$} (m-3-1)
  									(m-2-2) edge[barred] node[below] {$J$} (m-2-3)
  													edge node[left] {$r$} (m-3-2)
  									(m-2-3) edge node[right] {$d$} (m-3-3);
 		 		\path				(m-1-1) edge[eq] (m-1-2)
  									(m-1-3) edge[eq] (m-2-3)
  									(m-3-1) edge[eq] (m-3-2)
  									(m-3-2) edge[eq] (m-3-3);
  			\path[transform canvas={shift={($(m-2-3)!0.5!(m-3-2)$)}}]	(m-1-1) edge[cell] node[right] {$\phi'$} (m-2-1);
  			\path[transform canvas={shift={($(m-2-2)!0.5!(m-2-3)$)}, yshift=-2pt}] (m-2-2) edge[cell] node[right] {$\eps$} (m-3-2);
  			\draw				($(m-1-2)!0.5!(m-2-3)$) node {\cc};
  		\end{tikzpicture} = \mspace{-10mu}\begin{tikzpicture}[textbaseline]
  			\matrix(m)[math175em]{\tab H & \tab H &[0.4em] B \\ C & A & B \\ M & M & M \\};
  			\path[map]	(m-1-1) edge node[left] {$\pi_C$} (m-2-1)
  									(m-1-2) edge[barred] node[above] {$J(\pi_A, \id)$} (m-1-3)
  													edge node[right, inner sep=2pt] {$\pi_A$} (m-2-2)
  									(m-2-1) edge[barred] node[below] {$H$} (m-2-2)
  													edge node[left] {$s$} (m-3-1)
  									(m-2-2) edge[barred] node[below] {$J$} (m-2-3)
  													edge node[right] {$r$} (m-3-2)
  									(m-2-3) edge node[right] {$d$} (m-3-3);
 		 		\path				(m-1-1) edge[eq] (m-1-2)
  									(m-1-3) edge[eq] (m-2-3)
  									(m-3-1) edge[eq] (m-3-2)
  									(m-3-2) edge[eq] (m-3-3);
  			\path[transform canvas={shift={($(m-2-2)!0.5!(m-2-3)$)}, yshift=-2pt}]	(m-2-1) edge[cell] node[right] {$\phi''$} (m-3-1)
  									(m-2-2) edge[cell] node[right] {$\eps$} (m-3-2);
  			\draw				($(m-1-1)!0.5!(m-2-2)$) node {\tc}
  									($(m-1-2)!0.5!(m-2-3)$) node {\cc};
  		\end{tikzpicture}
		\end{displaymath}
		We write $\xi$ for the horizontal composite of the tabulation of $H$ and the cartesian cell defining $J(\pi_A, \id)$, that forms the top row in the composite on the left above. Assuming (c), $\phi \of \xi$ factors uniquely through $\eps \of \textup{cart}$ as a vertical cell $\phi'$ as in the first identity above. The cell $\phi'$ in turn factors through the tabulation, which is opcartesian, as a unique cell $\phi''$, as in the second identity above. The composite $\xi$ of the opcartesian tabulation and the cartesian cell is opcartesian by \lemref{composing cartesian and opcartesian cells}, so that the equality above implies $\phi = \phi'' \hc \eps$, since factorisations through opcartesian cells are unique.
		
		To show uniqueness of $\phi''$ consider a second cell $\cell\psi H{1_M}$ such that $\phi = \psi \hc \eps$. Composing with $\xi$ we obtain $\phi \of \xi = (\psi \of \textup{tab}) \hc (\eps \of \textup{cart})$, so that $\psi \of \textup{tab} = \phi'$ by uniqueness of $\phi'$. Hence $\psi = \phi''$ by uniqueness of factorsations through opcartesian cells, concluding the proof.
	\end{proof}
	
	Using the previous theorem we can show that, for any equipment $\K$ that has opcartesian tabulations, pointwise right Kan extensions along conjoints in $\K$ coincide with pointwise right Kan extensions in the $2$-category $V(\K)$, as follows.
	\begin{proposition} \label{pointwise right Kan extensions along conjoints as pointwise right Kan extensions in V(K)}
		In an equipment $\K$ that has opcartesian tabulations consider a vertical cell $\eps$, on the left below, as well as its factorisation $\eps'$ through the opcartesian cell defining the conjoint of $j$, as shown.
		\begin{displaymath}
			\begin{tikzpicture}[textbaseline]
    		\matrix(m)[math175em]{A & A \\ B & \phantom A \\ M & M \\};
    		\path[map]  (m-1-1) edge node[left] {$j$} (m-2-1)
       			        (m-1-2) edge node[right] {$d$} (m-3-2)
       			        (m-2-1) edge node[left] {$r$} (m-3-1);
      	\path				(m-1-1) edge[eq] (m-1-2)
      							(m-3-1) edge[eq] (m-3-2);
    		\path[transform canvas={shift={($(m-2-1)!0.5!(m-1-1)$)}}] (m-2-2) edge[cell] node[right] {$\eps$} (m-3-2);
  		\end{tikzpicture} = \begin{tikzpicture}[textbaseline]
				\matrix(m)[math175em]{A & A \\ B & A \\ M & M \\};
				\path[map]	(m-1-1) edge node[left] {$j$} (m-2-1)
										(m-2-1) edge[barred] node[below] {$j^*$} (m-2-2)
														edge node[left] {$r$} (m-3-1)
										(m-2-2) edge node[right] {$d$} (m-3-2);
				\path				(m-1-1) edge[eq] (m-1-2)
										(m-1-2) edge[eq] (m-2-2)
										(m-3-1) edge[eq] (m-3-2);
				\path[transform canvas={shift=(m-2-1), yshift=-2pt}]	(m-2-2) edge[cell] node[right] {$\eps'$} (m-3-2);
				\draw				($(m-1-1)!0.5!(m-2-2)$)	node {\oc};
			\end{tikzpicture}
		\end{displaymath}
		The vertical cell $\eps$ defines $r$ as the pointwise right Kan extension of $d$ along $j$ in $V(\K)$, in the sense of \defref{definition: pointwise left Kan extensions in 2-categories}, precisely if its factorisation $\eps'$ defines $r$ as the pointwise right Kan extension of $d$ along $j^*$ in $\K$, in the sense of \defref{definition: pointwise right Kan extension}.
	\end{proposition}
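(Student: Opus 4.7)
The plan is to chain together three results from the paper---\thmref{pointwise Kan extensions in terms of Kan extensions}, \propref{right Kan extensions along conjoints}, and \propref{right Kan extensions along conjoints as right Kan extensions in V(K)}---and then identify the resulting vertical cell with Street's cell, using \propref{comma objects in terms of tabulations}.

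First, because $\K$ has opcartesian tabulations, the equivalence (a) $\Leftrightarrow$ (c) of \thmref{pointwise Kan extensions in terms of Kan extensions}, applied to $\eps'$ with $J = j^*$, reduces the condition that $\eps'$ defines $r$ as the pointwise right Kan extension of $d$ along $j^*$ in $\K$ to the following: for every $\map fCB$, the composite $\eps' \of \textup{cart}$, in which the cartesian cell defines the restriction $\hmap{j^*(f, \id)}CA$, defines $r \of f$ as the right Kan extension of $d$ along $j^*(f, \id)$ in $\K$.

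Next, fix such an $f$ and consider the opcartesian tabulation of $j^*(f, \id)$, with projections $\map{\pi_C}{\tab{j^*(f,\id)}}C$ and $\map{\pi_A}{\tab{j^*(f,\id)}}A$. By \propref{comma objects in terms of tabulations}, and using that $B(f, j) = j^*(f, \id)$, this tabulation is the comma object $f \slash j$ in $V(\K)$, with the same projections $\pi_C$ and $\pi_A$. Applying \propref{right Kan extensions along conjoints} to $j^*(f, \id)$ and then \propref{right Kan extensions along conjoints as right Kan extensions in V(K)} shows that the condition above is in turn equivalent to the following: the vertical cell $\cell{\alpha}{r \of f \of \pi_C}{d \of \pi_A}$ obtained by factoring $(\eps' \of \textup{cart}) \of \textup{tab}'$ through the opcartesian cell defining $\pi_C^*$ defines $r \of f$ as the right Kan extension of $d \of \pi_A$ along $\pi_C$ in $V(\K)$.

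To conclude via \defref{definition: pointwise left Kan extensions in 2-categories}, it remains to verify that $\alpha$ coincides with Street's cell, that is the pasting of the comma $2$\ndash cell $\cell{\pi}{f \of \pi_C}{j \of \pi_A}$ with $\eps$. This is the main technical step. Equivalently, one must check that precomposing Street's cell on top with the opcartesian cell defining $\pi_C^*$ reproduces $(\eps' \of \textup{cart}) \of \textup{tab}'$. Unpacking $\eps'$ as the factorisation of $\eps$ through the opcartesian cell $\eta_j$, $\textup{tab}'$ as the factorisation of the opcartesian tabulation through the opcartesian cell defining $\pi_C^*$, and the comma $2$\ndash cell $\pi$ as the pasting of the tabulation and the cartesian cell defining $B(f,j)$ (as in the proof of \propref{comma objects in terms of tabulations}), both sides reduce to the same composite by iterated application of the companion and conjoint identities. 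The diagrammatic bookkeeping here is where most of the work lies, though no new ideas are needed beyond \lemref{cartesian and opcartesian cells in terms of companions and conjoints} and \lemref{companion and conjoint factorisations functoriality}.
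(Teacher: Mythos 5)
Your proposal is correct and follows essentially the same route as the paper's proof: reduce via the equivalence (a) $\Leftrightarrow$ (c) of \thmref{pointwise Kan extensions in terms of Kan extensions} to ordinary Kan extensions along $j^*(f,\id) \iso B(f,j)$, then transfer through the opcartesian tabulation of $B(f,j)$ using \propref{right Kan extensions along conjoints} and \propref{right Kan extensions along conjoints as right Kan extensions in V(K)}, identifying that tabulation with the comma object $f \slash j$ by \propref{comma objects in terms of tabulations}. The only remark worth making is that the ``main technical step'' you defer is lighter than you suggest: since the comma cell $\pi$ is by construction the vertical composite of the tabulation with the cartesian cell defining $B(f,j)$, identifying your cell $\alpha$ with Street's pasting comes down to the single identity $\eps \of \textup{cart} = \eps' \of (\textup{cart} \hc \textup{opcart})$, in which the top row composes to the cartesian cell defining $j^*(f,\id)$ by \lemref{composing cartesian and opcartesian cells}, so no further companion/conjoint bookkeeping is required.
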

	\begin{proof}
		Given any vertical morphism $\map fCB$ we consider the composite on the left side of the equation below, where the first identity follows from the factorisation of $\eps$ above. The second identity is obtained by composing the top row in the middle composite, consisting of a cartesian and opcartesian cell, which results in the cartesian cell in the composite on the right, by \lemref{composing cartesian and opcartesian cells}.
		\begin{displaymath}
			\begin{tikzpicture}[textbaseline]
				\matrix(m)[math175em]{C & A & A \\ B & B & \\ M & M & M \\};
				\path[map]	(m-1-1) edge[barred] node[above] {$B(f, j)$} (m-1-2)
														edge node[left] {$f$} (m-2-1)
										(m-1-2) edge node[right] {$j$} (m-2-2)
										(m-1-3) edge node[right] {$d$} (m-3-3)
										(m-2-1) edge node[left] {$r$} (m-3-1)
										(m-2-2) edge node[left] {$r$} (m-3-2);
				\path				(m-1-2) edge[eq] (m-1-3)
										(m-2-1) edge[eq] (m-2-2)
										(m-3-1) edge[eq] (m-3-2)
										(m-3-2) edge[eq] (m-3-3);
				\path[transform canvas={shift={($(m-2-3)!0.5!(m-3-2)$)}}]	(m-1-2) edge[cell] node[right] {$\eps$} (m-2-2);
				\draw				($(m-1-1)!0.5!(m-2-2)$)	node {\cc};
			\end{tikzpicture} = \begin{tikzpicture}[textbaseline]
				\matrix(m)[math175em]{C & A & A \\ B & B & A \\ M & M & M \\};
				\path[map]	(m-1-1) edge[barred] node[above] {$B(f, j)$} (m-1-2)
														edge node[left] {$f$} (m-2-1)
										(m-1-2) edge node[left] {$j$} (m-2-2)
										(m-2-3) edge node[right] {$d$} (m-3-3)
										(m-2-1) edge node[left] {$r$} (m-3-1)
										(m-2-2) edge[barred] node[below] {$j^*$} (m-2-3)
														edge node[left] {$r$} (m-3-2);
				\path				(m-1-2) edge[eq] (m-1-3)
										(m-1-3) edge[eq] (m-2-3)
										(m-2-1) edge[eq] (m-2-2)
										(m-3-1) edge[eq] (m-3-2)
										(m-3-2) edge[eq] (m-3-3);
				\path[transform canvas={shift={($(m-2-2)!0.5!(m-2-3)$)}, yshift=-2pt}]	(m-2-2) edge[cell] node[right] {$\eps'$} (m-3-2);
				\draw				($(m-1-1)!0.5!(m-2-2)$)	node {\cc}
										($(m-1-2)!0.5!(m-2-3)$) node {\oc};
			\end{tikzpicture} = \begin{tikzpicture}[textbaseline]
				\matrix(m)[math175em]{C & A \\ B & A \\ M & M \\};
				\path[map]	(m-1-1) edge[barred] node[above] {$B(f, j)$} (m-1-2)
														edge node[left] {$f$} (m-2-1)
										(m-2-1) edge[barred] node[below] {$j^*$} (m-2-2)
														edge node[left] {$r$} (m-3-1)
										(m-2-2) edge node[right] {$d$} (m-3-2);
				\path				(m-1-2) edge[eq] (m-2-2)
										(m-3-1) edge[eq] (m-3-2);
				\path[transform canvas={shift=(m-2-2), yshift=-2pt}]	(m-2-1) edge[cell] node[right] {$\eps'$} (m-3-1);
				\draw				($(m-1-1)!0.5!(m-2-2)$)	node {\cc};
			\end{tikzpicture}
		\end{displaymath}
		Since $\K$ has opcartesian tabulations we have (a) $\Leftrightarrow$ (c) in the previous theorem, which means that $\eps'$ defines $r$ as the pointwise right Kan extension of $d$ along $j^*$ if and only if for each $\map fCB$ the composite on the right above, and thus each composite above, defines $r \of f$ as the right Kan extension of $d$ along $B(f, j)$.
		
		Now consider the composite on the left above precomposed with the tabulation of $B(f, j)$, as on the left below, as well as with the factorisation of this tabulation through the opcartesian cell defining $\pi_C^*$, as on the right. Since the two top cells of the first column in the composite on the left define the comma object of $f$ and $j$ in $V(\K)$, by \propref{comma objects in terms of tabulations}, we have denoted the object of the tabulation by $f \slash j$.
		\begin{displaymath}
			\begin{tikzpicture}[baseline]
				\matrix(m)[math175em]{f \slash j & f \slash j & f \slash j \\ C & A & A \\ B & B & \\ M & M & M \\};
				\path[map]	(m-1-1) edge node[left] {$\pi_C$} (m-2-1)
										(m-1-2) edge node[right] {$\pi_A$} (m-2-2)
										(m-1-3) edge node[right] {$\pi_A$} (m-2-3)
										(m-2-1) edge[barred] node[below] {$B(f, j)$} (m-2-2)
														edge node[left] {$f$} (m-3-1)
										(m-2-2) edge node[right] {$j$} (m-3-2)
										(m-2-3) edge node[right] {$d$} (m-4-3)
										(m-3-1) edge node[left] {$r$} (m-4-1)
										(m-3-2) edge node[left] {$r$} (m-4-2);
				\path				(m-1-1) edge[eq] (m-1-2)
										(m-1-2) edge[eq] (m-1-3)
										(m-2-2) edge[eq] (m-2-3)
										(m-3-1) edge[eq] (m-3-2)
										(m-4-1) edge[eq] (m-4-2)
										(m-4-2) edge[eq] (m-4-3);
				\path[transform canvas={shift={($(m-3-2)!0.5!(m-3-3)$)}}]	(m-2-2) edge[cell] node[right] {$\eps$} (m-3-2);
				\draw				($(m-1-1)!0.5!(m-2-2)$) node {\tc}
										($(m-2-1)!0.5!(m-3-2)$)	node[, yshift=-1pt] {\cc};
			\end{tikzpicture} \qquad\qquad\qquad \begin{tikzpicture}[baseline]
				\matrix(m)[math175em]{C &[0.2em] f \slash j & f \slash j \\ C & A & A \\ B & B & \\ M & M & M \\};
				\path[map]	(m-1-1) edge[barred] node[above] {$\pi_C^*$} (m-1-2)
										(m-1-2) edge node[right] {$\pi_A$} (m-2-2)
										(m-1-3) edge node[right] {$\pi_A$} (m-2-3)
										(m-2-1) edge[barred] node[below] {$B(f, j)$} (m-2-2)
														edge node[left] {$f$} (m-3-1)
										(m-2-2) edge node[right] {$j$} (m-3-2)
										(m-2-3) edge node[right] {$d$} (m-4-3)
										(m-3-1) edge node[left] {$r$} (m-4-1)
										(m-3-2) edge node[left] {$r$} (m-4-2);
				\path				(m-1-1) edge[eq] (m-2-1)
										(m-1-2) edge[eq] (m-1-3)
										(m-2-2) edge[eq] (m-2-3)
										(m-3-1) edge[eq] (m-3-2)
										(m-4-1) edge[eq] (m-4-2)
										(m-4-2) edge[eq] (m-4-3);
				\path[transform canvas={shift={($(m-3-2)!0.5!(m-3-3)$)}}]	(m-2-2) edge[cell] node[right] {$\eps$} (m-3-2);
				\draw				($(m-1-1)!0.5!(m-2-2)$) node {\tc$'$}
										($(m-2-1)!0.5!(m-3-2)$)	node[, yshift=-1pt] {\cc};
			\end{tikzpicture}
		\end{displaymath}
		The proof is completed as follows: we have seen that $\eps'$ defines $r$ as the pointwise right Kan extension of $d$ along $j^*$ in $\K$ if and only if the composite of the two bottom rows on the right above defines $r \of f$ as the right Kan extension of $d$ along $B(f, j)$, for each $\map fCA$. By \propref{right Kan extensions along conjoints} the latter is equivalent to the full composite on the right defining $r \of f$ as the right Kan extension of $d \of \pi_A$ along $\pi_C^*$, for each $\map fCA$. This in turn is equivalent to, for each $\map fCA$, the composite on the left defining $r \of f$ as the right Kan extension of $d \of \pi_A$ along $\pi_C$ in the $2$-category $V(\K)$, by \propref{right Kan extensions along conjoints as right Kan extensions in V(K)}. As a condition on the vertical cell $\eps$ in $V(\K)$, the latter is precisely the condition given by \defref{definition: pointwise left Kan extensions in 2-categories}.
	\end{proof}
	
	Combining the previous result with \propref{right Kan extensions along conjoints}, we see that in an equipment $\K$ with opcartesian tabulations all pointwise Kan extensions can be regarded as pointwise Kan extensions in $V(\K)$, as follows.
	\begin{corollary} \label{pointwise Kan extensions as pointwise vertical Kan extensions}
		Consider a cell $\eps$, as in the composite below, in an equipment $\K$ that has opcartesian tabulations. It defines $r$ as the pointwise right Kan extension of $d$ along $J$ in $\K$ precisely if the full composite defines $r$ as the pointwise right Kan extension of $d \of \pi_B$ along $\pi_A$ in $V(\K)$.
		\begin{displaymath}
			\begin{tikzpicture}
				\matrix(m)[tab]{\gen J & \gen J \\ A & B \\ M & M \\};
				\path[map]	(m-1-1) edge node[left] {$\pi_A$} (m-2-1)
										(m-1-2) edge node[right] {$\pi_B$} (m-2-2)
										(m-2-1) edge[barred] node[below] {$J$} (m-2-2)
														edge node[left] {$r$} (m-3-1)
										(m-2-2) edge node[right] {$d$} (m-3-2);
				\path				(m-1-1) edge[eq] (m-1-2)
										(m-3-1) edge[eq] (m-3-2);
				\path[transform canvas={shift=(m-2-2), yshift=-2pt}]	(m-2-1) edge[cell] node[right] {$\eps$} (m-3-1);
				\draw				($(m-1-1)!0.5!(m-2-2)$) node {\tc};
			\end{tikzpicture}
		\end{displaymath}
	\end{corollary}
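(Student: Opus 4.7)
The plan is to chain the two preceding results. \propref{right Kan extensions along conjoints} will convert pointwise right Kan extensions along $J$ into ones along the conjoint $\pi_A^*$, and \propref{pointwise right Kan extensions along conjoints as pointwise right Kan extensions in V(K)} will convert pointwise right Kan extensions along $\pi_A^*$ in $\K$ into ones along $\pi_A$ in $V(\K)$; the composition of these equivalences is exactly the content of the corollary.

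First I would invoke the hypothesis that $\K$ is an equipment with opcartesian tabulations, which ensures that the conjoint $\pi_A^*$ exists. Factoring the opcartesian tabulation of $J$ as in \propref{right Kan extensions along conjoints} produces a horizontal cell $\textup{tab}'$, with horizontal source $\pi_A^*$, horizontal target $J$, left boundary $\id_A$ and right boundary $\pi_B$, such that the tabulation equals the vertical composite of the opcartesian cell defining $\pi_A^*$ with $\textup{tab}'$. That proposition then yields that $\eps$ defines $r$ as the pointwise right Kan extension of $d$ along $J$ in $\K$ if and only if $\eps \of \textup{tab}'$ defines $r$ as the pointwise right Kan extension of $d \of \pi_B$ along $\pi_A^*$ in $\K$.

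Next I would identify $\eps \of \textup{tab}'$ with the conjoint factorisation of the full composite appearing in the corollary, in the sense of \propref{pointwise right Kan extensions along conjoints as pointwise right Kan extensions in V(K)}. This is a direct diagram identification: the full composite is the tabulation of $J$ stacked above $\eps$, and replacing the tabulation by its factorisation puts the opcartesian cell defining $\pi_A^*$ on top of $\eps \of \textup{tab}'$, so that uniqueness of factorisations through opcartesian cells identifies $\eps \of \textup{tab}'$ with the cell $\eps'$ of \propref{pointwise right Kan extensions along conjoints as pointwise right Kan extensions in V(K)}. Applying that proposition (with $j$ replaced by $\pi_A$ and $d$ by $d \of \pi_B$) then gives that $\eps \of \textup{tab}'$ defines the pointwise right Kan extension of $d \of \pi_B$ along $\pi_A^*$ in $\K$ precisely when the full composite defines the pointwise right Kan extension of $d \of \pi_B$ along $\pi_A$ in $V(\K)$.

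Composing the two equivalences produces the biconditional claimed in the corollary. There is no genuinely hard step; the only point requiring any care is the diagram identification of $\eps \of \textup{tab}'$ with the factorisation of the full composite through the opcartesian cell defining $\pi_A^*$, and this is immediate from the construction of $\textup{tab}'$ together with uniqueness of opcartesian factorisations.
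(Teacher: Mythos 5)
Your argument is correct and is precisely the paper's intended proof: the corollary is stated immediately after the remark that it follows by ``combining the previous result with \propref{right Kan extensions along conjoints}'', which is exactly the chain of two equivalences you set up, including the identification of $\eps \of \textup{tab}'$ with the factorisation $\eps'$ via uniqueness of factorisations through the opcartesian cell defining $\pi_A^*$. The only quibble is terminological: $\textup{tab}'$ is not a \emph{horizontal} cell in the paper's sense, since its right vertical boundary is $\pi_B$ rather than an identity.
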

	
	\subsection{Opcartesian tabulations for internal profunctors.}
	We close this paper by showing that internal profunctors admit opcartesian tabulations. A weaker version of this result is given by Betti in \cite{Betti96}, whose notion of tabulation is a variant of the $1$\ndash dimensional property of \defref{definition: tabulation}, which is weakened by only requiring that $\pi \of 1_\phi' = \phi$ holds up to invertible vertical cells $\phi_A \iso \pi_A \of \phi'$ and $\phi_B \iso \pi_B \of \phi'$. The proof given below shows that the tabulations in $\inProf\E$ can be chosen such that both universal properties of \defref{definition: tabulation} hold, in the strict sense.
	\begin{proposition}[Betti] \label{opcartesian tabulations for internal profunctors}
		Let $\E$ be a category with pullbacks, and coequalisers that are preserved by pullbacks. The equipment $\inProf\E$ of profunctors internal in $\E$ admits opcartesian tabulations.
	\end{proposition}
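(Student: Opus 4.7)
The plan is to construct the tabulation $\tab J$ of a given internal profunctor $\hmap JAB$ explicitly, generalising the unenriched case of \exref{example: tabulations of profunctors}. Take $\tab J_0 := J$ and let $\tab J_1$ be the pullback
\[\tab J_1 := (A \pb{d_1}{d_0} J) \pb{l}{r} (J \pb{d_1}{d_0} B),\]
formed over $J$ via the left and right actions $l, r$. An $\E$\ndash point of $\tab J_1$ is a quadruple $(u, j, j', v)$ satisfying $r(j, v) = l(u, j')$, which we read as a commuting square in $J$ with top $j$, bottom $j'$, left $u$ and right $v$. The source and target maps $\tab J_1 \to J$ project to $j$ and $j'$; the unit sends $j$ to the degenerate square with identity left and right edges; and the multiplication composes squares vertically using the multiplications of $A$ and $B$. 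The internal functors $\pi_A\colon \tab J \to A$ and $\pi_B\colon \tab J \to B$ are on objects the span projections $d_0, d_1\colon J \to A_0, B_0$, and on morphisms select the left and right vertical edges. By \lemref{vertical internal transformations}, the tabulation cell $\pi$ is specified by an $\E$\ndash map $\tab J_0 \to J$, for which we take $\id_J$; its naturality condition amounts, on $\E$\ndash points, to the defining equation $l(u, j') = r(j, v)$ of $\tab J_1$, and so holds tautologically.

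To verify the $1$-dimensional universal property, given a cell $\phi\colon 1_X \to J$ with boundaries $\phi_A, \phi_B$, let $\phi_0\colon X_0 \to J$ be its underlying $\E$\ndash map from \lemref{vertical internal transformations}, and set $\phi'_0 := \phi_0$. The two legs $(\phi_A, \phi_0 \of d_1^X)\colon X \to A \pb{d_1}{d_0} J$ and $(\phi_0 \of d_0^X, \phi_B)\colon X \to J \pb{d_1}{d_0} B$ agree under $l$ and $r$ precisely by the naturality condition for $\phi$, so the universal property of the pullback supplies a unique $\phi'_1\colon X \to \tab J_1$; functoriality of $\phi'$ reduces to that of $\phi_A$ and $\phi_B$ together with the naturality of $\phi$. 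The $2$-dimensional universal property is verified analogously: the prescribed pair $(\xi_A, \xi_B)$ together with the compatibility identity of \defref{definition: tabulation} provides two compatible legs $H \to A \pb{d_1}{d_0} J$ and $H \to J \pb{d_1}{d_0} B$ into the pullback, inducing the unique factorisation $\xi'\colon H \to \tab J_1$.

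The principal obstacle is verifying that $\pi$ is opcartesian. A cell $\chi\colon 1_{\tab J} \to L$ with boundaries $h \of \pi_A$ and $k \of \pi_B$ is by \lemref{vertical internal transformations} an $\E$\ndash map $\chi_0\colon J \to L$ satisfying a naturality condition indexed by $\tab J_1$. The proposed factorisation is the transformation $\chi'\colon J \to L$ of internal profunctors with the same underlying $\E$\ndash map $\chi_0$ and boundaries $h, k$; then $\chi' \of \pi = \chi$ is immediate since $\pi_0 = \id_J$, and uniqueness is automatic. The delicate point is to show that $\chi_0$ is action-compatible in the sense of \eqref{naturality of internal transformation}. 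For the left action, construct an $\E$\ndash map $A \pb{d_1}{d_0} J \to \tab J_1$ picking out the degenerate square with data $(u, l(u, j'), j', e \of d_1)$ -- available from the universal property of the pullback using the unit $e$ of $B$ -- and pull the naturality of $\chi$ back along this map; invoking the unit law for the right action of $Y$ on $L$ yields the required identity $l^L \of (h \times \chi_0) = \chi_0 \of l$. The right-action compatibility follows dually via degenerate squares built from the unit of $A$.
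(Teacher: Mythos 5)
Your construction is essentially identical to the paper's: the same pullback $\tab J_1$ of the two actions $l$ and $r$, the same projections and diagonal cell $\pi$, the same pullback-induced factorisations for both universal properties, and the same ``degenerate square'' trick (inserting a unit of $B$, resp.\ $A$) to show that the opcartesian factorisation $\chi_0 = \chi \of e_{\tab J}$ is compatible with the actions. The only differences are presentational: you route several verifications (the definition of $\pi$, the identity $\chi' \of \pi = \chi$, uniqueness) through \lemref{vertical internal transformations} rather than checking them directly, and you leave the routine checks that $m$ lands in the pullback and is associative, and the uniqueness of $\phi'$, somewhat more implicit than the paper does.
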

	\begin{proof}
		Consider an internal profunctor $\hmap{J = (J, l, r)}AB$ between internal categories $A = (A, m_A, e_A)$ and $B = (B, m_B, e_B)$. We will define a cell $\pi$, as on the left below, and show that it statisfies the universal properties and that it is opcartesian.
		\begin{equation} \label{internal tabulation pullback}
			\begin{tikzpicture}[textbaseline]
  			\matrix(m)[tab]{\tab J & \tab J \\ A & B \\};
  			\path[map]  (m-1-1) edge node[left] {$\pi_A$} (m-2-1)
										(m-1-2) edge node[right] {$\pi_B$} (m-2-2)
										(m-2-1) edge[barred] node[below] {$J$} (m-2-2);
				\path				(m-1-1) edge[eq] (m-1-2);
				\path[transform canvas={shift={($(m-1-2)!(0,0)!(m-2-2)$)}}] (m-1-1) edge[cell] node[right] {$\pi$} (m-2-1);
			\end{tikzpicture} \qquad\qquad\qquad\qquad \begin{tikzpicture}[textbaseline]
			\matrix(m)[math2em, column sep=0.75em]{\gen J & J \times_{B_0} B \\ A \times_{A_0} J & J \\};
				\path[map]	(m-1-1)	edge (m-1-2)
														edge (m-2-1)
										(m-1-2) edge node[right] {$r$} (m-2-2)
										(m-2-1) edge node[below] {$l$} (m-2-2);
				\coordinate (hook) at ($(m-1-1)+(0.4,-0.4)$);
				\draw (hook)+(0,0.17) -- (hook) -- +(-0.17,0);
			\end{tikzpicture}
		\end{equation}
		\emph{Definition of $\pi$.} We first define the internal category $\gen J$. As its underlying $\E$-span we take $\gen J_0 = J$ and let $\gen J$ be the pullback of the actions of $B$ and $A$ on $J$, as on the right above, while the source and target maps are given by the projections below. Notice that these are the internal versions of the sets of objects and morphisms that underlie the tabulation of an ordinary profunctor, as described in \exref{example: tabulations of profunctors}.
		\begin{displaymath}
			d_0 = \bigbrks{\gen J \to J \times_{B_0} B \to J} \qquad \qquad \qquad d_1 = \bigbrks{\gen J \to A \times_{A_0} J \to J}
		\end{displaymath}
		
		To give the multiplication $\map m{\gen J \times_J \gen J}{\gen J}$, notice that its source $W = \gen J \times_J \gen J$ forms the wide pullback of the diagram
		\begin{equation} \label{multiplication of internal tabulation}
			\begin{tikzpicture}[textbaseline]
				\matrix(m)[math175em, column sep=1em]
				{	J \times_{B_0} B & & A \times_{A_0} J & & J \times_{B_0} B & & A \times_{A_0} J, \\
					& J & & J & & J & \\}; 
				\path[map]	(m-1-1) edge node[below left] {$r$} (m-2-2)
										(m-1-3) edge node[below right] {$l$} (m-2-2)
														edge (m-2-4)
										(m-1-5) edge (m-2-4)
														edge node[below left] {$r$} (m-2-6)
										(m-1-7) edge node[below right] {$l$} (m-2-6);
				\draw[font=\scriptsize]	(m-1-1) node[above=3pt] {(1)}
																(m-1-3) node[above=3pt] {(2)}
																(m-1-5) node[above=3pt] {(3)}
																(m-1-7) node[above=3pt] {(4)};
			\end{tikzpicture}
		\end{equation}
		where the first factor of $W$ is the pullback of the objects labelled (1) and (2), the second factor is the pullback of those labelled (3) and (4), and where the unlabelled maps are projections. In the case of $\E = \Set$, $W$ consists of quadruples of pairs of maps, one in each of the sets on the top row above, such that the pairs in (1) and (2), as well as those in (3) and (4), form commutative squares \eqref{morphism in a double comma category}, and such that the bottom map of the first square coincides with the top map of the second. Let $m$ be induced by the compositions
		\begin{flalign*}
			&& W &\to \overbrace{J \times_{B_0} B}^{(1)} \times_{B_0} \overset{\raisebox{0.75ex}{\scriptsize{(3)}}}B \xrar{\id \times m_B} J \times_{B_0} B & \\
			\text{and} && W &\to \overset{\raisebox{0.75ex}{\scriptsize{(2)}}}A \times_{A_0} \overbrace{A \times_{A_0} J}^{(4)} \xrar{m_A \times \id} A \times_{A_0} J, &
		\end{flalign*}
		where the unlabelled maps are induced by projections from $W$ onto (factors of) the pullbacks in the top row of \eqref{multiplication of internal tabulation}, as indicated by the labels. When $\E = \Set$, this simply composes the two pairs of maps making up the sides of two adjacent commutative squares \eqref{morphism in a double comma category}. That the maps above indeed induce a map into the pullback $\gen J$ follows from
		\begin{align*}
			\bigbrks{&W \to \overbrace{J \times_{B_0} B}^{(1)} \times_{B_0} \overset{\raisebox{0.75ex}{\scriptsize{(3)}}}B \xrar{\id \times m_B} J \times_{B_0} B \xrar r J} \\
			=\bigbrks{&W \to \overbrace{A \times_{A_0} J}^{(2)} \times_{B_0} \overset{\raisebox{0.75ex}{\scriptsize{(3)}}}B \xrar{l \times \id} J \times_{B_0} B \xrar r J} \\
			=\bigbrks{&W \to \overset{\raisebox{0.75ex}{\scriptsize{(2)}}}A \times_{A_0} \overbrace{J \times_{B_0} B}^{(3)} \xrar{\id \times r} A \times_{A_0} J \xrar l J} \\ 
			=\bigbrks{&W \to \overset{\raisebox{0.75ex}{\scriptsize{(2)}}}A \times_{A_0} \overbrace{A \times_{A_0} J}^{(4)} \xrar{m_A \times \id} A \times_{A_0} J \xrar l J}.
		\end{align*}
		Here the first equality follows from the fact that we can replace $\id \times m_B$ by $r \times \id$ (by associativity of the action of $B$ on $J$) and that precomposing $r$ with the projection onto (1) coincides with precomposing $l$ with the projection onto (2), by definition of $W$. Likewise the projection from $W$ onto the factor $J$ of (2) equals the projection onto the same factor of (3) and, together with the fact that $r$ and $l$ commute, the second equality follows. The third equality is symmetric to the first.
	
		That the multiplication $\map m{\gen J \times_J \gen J}{\gen J}$ thus defined is compatible with the source and target maps is clear; that it is associative follows from the associativity of $m_A$ and $m_B$. Moreover one readily verifies that the unit $\map{e_{\gen J}} J{\gen J}$ can be taken to be induced by the $\E$-maps
		\begin{displaymath}
			J \xrar{(\id, e_B \of d_1)} J \times_{B_0} B \qquad \text{and} \qquad J \xrar{(e_A \of d_0, \id)} A \times_{A_0} J,
		\end{displaymath}
		which completes the definition of the structure that makes $\gen J$ into a category internal in $\E$. Next let the projection $\map{\pi_A}{\gen J}A$ be defined by the maps
		\begin{displaymath}
			\map{(\pi_A)_0 = d_0}J{A_0} \qquad \text{and} \qquad \pi_A = \bigbrks{\gen J \to A \times_{A_0} J \to A};
		\end{displaymath}
		the projection $\pi_B$ onto $B$ is given analogously. One readily checks that these maps are compatible with the structures on $\gen J$, $A$ and $B$. Finally we define the cell $\pi$, which is given by a map $\gen J \to J$ in $\E$, to be the diagonal of the square defining $\gen J$, on the right of \eqref{internal tabulation pullback}. That it is natural follows from the associativity of the actions of $A$ and $B$ on $J$. This completes the definition of the internal transformation $\pi$.
		
		\emph{Universal properties of $\pi$.} To see that $\pi$ satisfies the $1$-dimensional property of \defref{definition: tabulation} we consider an internal transformation $\phi$ as on the left below; we have to show that it factors uniquely through $\pi$ as an internal functor $\map{\phi'}X{\gen J}$.
		\begin{displaymath}
			\begin{tikzpicture}
  			\matrix(m)[math175em]{X & X \\ A & B \\};
  			\path[map]  (m-1-1) edge node[left] {$\phi_A$} (m-2-1)
										(m-1-2) edge node[right] {$\phi_B$} (m-2-2)
										(m-2-1) edge[barred] node[below] {$J$} (m-2-2);
				\path				(m-1-1) edge[eq] (m-1-2);
				\path[transform canvas={shift={($(m-1-2)!(0,0)!(m-2-2)$)}}] (m-1-1) edge[cell] node[right] {$\phi$} (m-2-1);
			\end{tikzpicture}
		\end{displaymath}
		Such an internal functor $\phi'$ is given by $\E$-maps $\map{\phi'_0}{X_0}J$ and $\map{\phi'}X{\gen J}$. We take $\phi'_0 = \bigbrks{X_0 \xrar{e_X} X \xrar{\phi} J}$ and let $\phi'$ be induced by
		\begin{equation} \label{1-dimensional universal property}
			X \xrar{(\phi'_0 \of d_0, \phi_B)} J \times_{B_0} B \qquad \text{and} \qquad X \xrar{(\phi_A, \phi'_0 \of d_1)} A \times_{A_0} J;
		\end{equation}
		the naturality of $\phi$ implies that both these maps, when composed with $r$ and $l$ respectively, equal $\phi$, so that they induce a map $\map{\phi'}X{\gen J}$ satisfying $\pi \of \phi' = \phi$. It is clear that $\phi'_0$ and $\phi$ are compatible with the source and target maps; that they are compatible with the multiplication and units of $A$ and $B$ follows from the fact that $\phi_A$ and $\phi_B$ are.
		
		We have already seen that $\pi \of \phi' = \phi$, and it is easily checked that $\pi_A \of \phi' = \phi_A$ and $\pi_B \of \phi' = \phi_B$ as well, so that $\phi$ factors through $\pi$ as $\phi'$. To see that $\phi'$ is unique notice that
		\begin{displaymath}
			\phi \of e_X = \pi \of \phi' \of e_X = \pi \of e_{\gen J} \of \phi'_0 = \phi'_0
		\end{displaymath}
		where the first follows from the factorisation $\phi = \pi \of \phi'$, the second from the unit axiom for $\phi'$ and the last from the definition of $e_{\gen J}$ and $\pi$. This shows that $\phi'_0$ is uniquely determined; that $\phi'$ must be determined by the maps \eqref{1-dimensional universal property} as well follows from $\pi_A \of \phi' = \phi_A$, $\pi_B \of \phi' = \phi_B$, and the compatibility of $\phi'$ and $\phi'_0$ with the source and target maps. This concludes the proof of the $1$-dimensional universal property of $\pi$.
		
		For the $2$-dimensional property of \defref{definition: tabulation} consider the identity of internal transformations as on the left below.
		\begin{displaymath}
			\begin{tikzpicture}[textbaseline]
				\matrix(m)[math175em]{X & Y & Y \\ A & A & B \\};
				\path[map]	(m-1-1) edge[barred] node[above] {$H$} (m-1-2)
														edge node[left] {$\phi_A$} (m-2-1)
										(m-1-2) edge node[right, inner sep=1pt] {$\psi_A$} (m-2-2)
										(m-1-3) edge node[right] {$\psi_B$} (m-2-3)
										(m-2-2) edge[barred] node[below] {$J$} (m-2-3);
				\path				(m-1-2) edge[eq] (m-1-3)
										(m-2-1) edge[eq] (m-2-2);
				\path[transform canvas={shift={($(m-1-2)!0.5!(m-2-3)$)}}] (m-1-1) edge[cell] node[right] {$\xi_A$} (m-2-1)
										(m-1-2) edge[cell] node[right] {$\psi$} (m-2-2);
			\end{tikzpicture} = \begin{tikzpicture}[textbaseline]
				\matrix(m)[math175em]{X & X & Y \\ A & B & B \\};
				\path[map]	(m-1-1) edge node[left] {$\phi_A$} (m-2-1)
										(m-1-2) edge[barred] node[above] {$H$} (m-1-3)
														edge node[right, inner sep=1pt] {$\phi_B$} (m-2-2)
										(m-1-3) edge node[right] {$\psi_B$} (m-2-3)
										(m-2-1) edge[barred] node[below] {$J$} (m-2-2);
				\path				(m-1-1) edge[eq] (m-1-2)
										(m-2-2) edge[eq] (m-2-3);
				\path[transform canvas={shift={($(m-1-2)!0.5!(m-2-3)$)}}] (m-1-1) edge[cell] node[right] {$\phi$} (m-2-1)
										(m-1-2) edge[cell] node[right] {$\xi_B$} (m-2-2);
			\end{tikzpicture} \qquad \qquad \begin{tikzpicture}[textbaseline]
  			\matrix(m)[math175em]{X & Y \\ \tab J & \tab J \\};
  			\path[map]  (m-1-1) edge[barred] node[above] {$H$} (m-1-2)
														edge node[left] {$\phi'$} (m-2-1)
										(m-1-2) edge node[right] {$\psi'$} (m-2-2);
				\path				(m-2-1) edge[eq] (m-2-2);
				\path[transform canvas={shift={($(m-1-2)!(0,0)!(m-2-2)$)}}] (m-1-1) edge[cell] node[right] {$\xi'$} (m-2-1);
			\end{tikzpicture}
		\end{displaymath}
		We have to construct an internal transformation $\xi'$ as on the right above, given by an $\E$-map $\map{\xi'}H{\gen J}$, such that $\pi_A \of \xi' = \xi_A$ and $\pi_B \of \xi' = \xi_B$. Together with the fact that $\xi'$ must be compatible with source and target maps, these identities completely determine $\xi'$ as being induced by the maps
		\begin{displaymath}
			H \xrar{(\phi'_0 \of d_0, \xi_B)} J \times_{B_0} B \qquad \text{and} \qquad H \xrar{(\xi_A, \psi'_0 \of d_1)} A \times_{A_0} J;
		\end{displaymath}
		that these maps coincide after composition with $r$ and $l$ respectively follows from the identity on the left above. Naturality of $\xi'$ follows from that of $\xi_A$ and $\xi_B$. This completes the proof of the universal properties of $\pi$.
		 
		\emph{The cell $\pi$ is opcartesian.} To show that $\pi$ is opcartesian we have to show that every internal transformation $\chi$ as on the left below factors uniquely as shown.
		\begin{displaymath}
		 	\begin{tikzpicture}[textbaseline]
    		\matrix(m)[math175em]{\gen J & \gen J \\ A & B \\ C & D \\};
    		\path[map]  (m-1-1) edge node[left] {$\pi_A$} (m-2-1)
        		        (m-1-2) edge node[right] {$\pi_B$} (m-2-2)
        		        (m-2-1) edge node[left] {$f$} (m-3-1)
        		        (m-2-2) edge node[right] {$g$} (m-3-2)
        		        (m-3-1) edge[barred] node[below] {$K$} (m-3-2);
       	\path				(m-1-1) edge[eq] (m-1-2);
    		\path[transform canvas={shift={($(m-2-1)!0.5!(m-1-1)$)}}] (m-2-2) edge[cell] node[right] {$\chi$} (m-3-2);
  		\end{tikzpicture} = \begin{tikzpicture}[textbaseline]
    		\matrix(m)[math175em]{\gen J & \gen J \\ A & B \\ C & D \\};
    		\path[map]  (m-1-1) edge node[left] {$\pi_A$} (m-2-1)
            		    (m-1-2) edge node[right] {$\pi_B$} (m-2-2)
            		    (m-2-1) edge[barred] node[below] {$J$} (m-2-2)
            		            edge node[left] {$f$} (m-3-1)
            		    (m-2-2) edge node[right] {$g$} (m-3-2)
            		    (m-3-1) edge[barred] node[below] {$K$} (m-3-2);
        \path				(m-1-1) edge[eq] (m-1-2);
    		\path[transform canvas={shift=(m-2-1))}]
        		        (m-1-2) edge[cell] node[right] {$\pi$} (m-2-2)
        		        (m-2-2) edge[transform canvas={yshift=-0.3em}, cell] node[right] {$\chi'$} (m-3-2);
  		\end{tikzpicture}
		\end{displaymath}
		The internal transformation $\chi'$ is given by an $\E$-map $\map{\chi'}JK$, and precomposing the identity above with the unit $\map{e_{\gen J}}J{\gen J}$, for which $\pi \of e_{\gen J} = \id_J$ holds, gives $\chi \of e_{\gen J} = \chi'$. Thus the identity above determines $\chi'$ as $\chi' = \chi \of e_{\gen J} = \chi_0$, where $\chi_0$ is as in \lemref{vertical internal transformations}. That $\chi' = \chi_0$ is natural with respect to the action of $A$ is shown by the following equation.
		\begin{align*}
			\bigbrks{&A \times_{A_0} J \xrar{f \times_{f_0} \chi_0} C \times_{C_0} K \xrar l K} \\
				= \bigbrks{&A \times_{A_0} J \xrar{((l, e_B \of d_1), \id)} \gen J \xrar{(f \of \pi_A, \chi_0 \of d_1)} C \times_{C_0} K \xrar l K} \\
				= \bigbrks{&A \times_{A_0} J \xrar{((l, e_B \of d_1), \id)} \gen J \xrar{(\chi_0 \of d_0, g \of \pi_B)} K \times_{D_0} D \xrar r K} \\
				= \bigbrks{&A \times_{A_0} J \xrar{(\chi_0 \of l, g \of e_B \of d_1)} K \times_{D_0} D \xrar r K} \\ 
				= \brks{&A \times_{A_0} J \xrar l J \xrar{\chi_0} K}
		\end{align*}
		The first and third identities here are straightforward to check, while the second and fourth follow from the naturality of $\chi_0$ (see \lemref{vertical internal transformations}) and the unit axiom for $r$ respectively. That $\chi'$ is natural with respect to the action of $B$ is shown similarly. This concludes the proof.
	\end{proof}

\end{document}